\def\version{
May 21, 2016
}
\providecommand{\emph}[1]{{\it #1}}
\renewcommand{\emph}[1]{{\it #1}}%% or else ulem changes \em to \underline
\definecolor{MyDarkBlue}{rgb}{0,0.08,0.45}
\providecommand{\eprint}[1]{}
\renewcommand{\eprint}[1]{arXiv:\href{http://arxiv.org/abs/#1}{#1}}
\providecommand{\eqref}[1]{{\rm (\ref{#1})}}
\providecommand{\itref}[1]{{\it (\ref{#1})}}
\DeclareSymbolFont{AMSb}{U}{msb}{m}{n}
\DeclareSymbolFontAlphabet{\mathbb}{AMSb}
\DeclareSymbolFont{EUB}{U}{eur}{b}{n}
\DeclareSymbolFontAlphabet{\eub}{EUB}
\newcommand{\jj}{\mathrm{i}}
\newcommand{\End}{\,{\rm End}\,}
\newcommand{\scrD}{\mathscr{D}}
\newcommand{\scrX}{\mathscr{X}}
\newcommand{\calN}{\mathcal{N}}
\newcommand{\eubD}{\eub{D}}
\newcommand{\eubJ}{\eub{J}}
\newcommand{\eubL}{\eub{L}}
\newcommand{\bmupalpha}{\bm\upalpha}
\newcommand{\bmupbeta}{\bm\upbeta}
\newcommand{\bmupphi}{\bm\upphi}
\newcommand{\bmuprho}{\bm\uprho}
\newcommand{\notyet}[1]{{}}
\newcommand{\tr}{\mathop{\rm Tr}}
\newcommand{\supp}{\mathop{\rm supp}}
\newcommand{\p}{\partial}
\newcommand{\at}[1]{\vert\sb{\sb{#1}}}
\def\R{\mathbb{R}}
\newcommand{\C}{\mathbb{C}}
\newcommand{\N}{\mathbb{N}}
\newcommand{\Abs}[1]{\left\vert#1\right\vert}
\newcommand{\abs}[1]{\vert #1 \vert}
\newcommand{\Norm}[1]{\Big\Vert #1 \Big\Vert}
\newcommand{\norm}[1]{\Vert #1 \Vert}
\newcommand{\sothat}{\,\,{\rm ;}\ \,}
\DeclareMathSymbol{\varGamma}{\mathord}{letters}{"00}
\DeclareMathSymbol{\varDelta}{\mathord}{letters}{"01}
\DeclareMathSymbol{\varTheta}{\mathord}{letters}{"02}
\DeclareMathSymbol{\varLambda}{\mathord}{letters}{"03}
\DeclareMathSymbol{\varXi}{\mathord}{letters}{"04}
\DeclareMathSymbol{\varPi}{\mathord}{letters}{"05}
\DeclareMathSymbol{\varSigma}{\mathord}{letters}{"06}
\DeclareMathSymbol{\varUpsilon}{\mathord}{letters}{"07}
\DeclareMathSymbol{\varPhi}{\mathord}{letters}{"08}
\DeclareMathSymbol{\varPsi}{\mathord}{letters}{"09}
\DeclareMathSymbol{\varOmega}{\mathord}{letters}{"0A}
\theoremstyle{plain}
\newtheorem{lemma}{Lemma}[section]
\newtheorem{theorem}[lemma]{Theorem}
\newtheorem{proposition}[lemma]{Proposition}
\theoremstyle{definition}
\newtheorem{definition}[lemma]{Definition}
\theoremstyle{remark}
\newtheorem{remark}[lemma]{Remark}
\newcounter{step}
\newenvironment{proofstep}
 {\begin{list}{\bf Step \arabic{step}.~~}{\usecounter{step} \labelsep=0em
\labelwidth=0em \leftmargin=0em \itemindent=0em}}
 {\end{list}}
\makeatletter\@addtoreset{equation}{section}
\makeatletter\@addtoreset{lemma}{section}
\def\dist{\mathop{\rm dist}\nolimits}
\renewcommand{\Re}{\mathop{\rm{R\hskip -1pt e}}\nolimits}
\renewcommand{\Im}{\mathop{\rm{I\hskip -1pt m}}\nolimits}
\begin{document}

\title{On spectral stability of the nonlinear Dirac equation}

\author{
{\sc Nabile Boussa{\"\i}d}
\\
{\it\small Universit\'e de Franche-Comt\'e, 25030 Besan\c{c}on CEDEX, France}
\\ \\
{\sc Andrew Comech}
\\
{\it\small Texas A\&M University, College Station, Texas 77843, U.S.A.}
{\small and}
{\it\small IITP, Moscow 127051, Russia}
}

\date{\version}

\maketitle

\begin{abstract}
We study the point spectrum of the nonlinear Dirac equation
in any spatial dimension,
linearized at one of the solitary wave solutions.
We prove that, in any dimension, the linearized equation has no embedded eigenvalues
in the part of the essential spectrum beyond the embedded thresholds.
We then prove that the birth of point eigenvalues with nonzero real part
(the ones which lead to linear instability)
from the essential spectrum is only possible from the embedded eigenvalues or thresholds,
and therefore can not take place beyond the embedded thresholds.
We also prove that ``in the nonrelativistic limit''
$\omega\to m$,
the point eigenvalues can only accumulate to $0$ and $\pm 2m\jj$.
\end{abstract}

%% \begin{keyword}
%% Carleman estimates\sep
%% nonlinear Dirac equation\sep
%% Soler model\sep
%% %solitary waves\sep
%% spectral stability\sep
%% stability of solitary waves\sep
%% spectral theory of nonselfadjoint operators
%% \MSC[2010]
%% 35B10 % Periodic solutions
%% \sep 35B32 % Bifurcation
%% \sep 35B35 % Stability
%% %35B40 % Asymptotic behavior of solutions
%% %35B41 % Attractors
%% \sep 35C08 % Soliton solutions
%% \sep 35Q41 % Time-dependent Schrödinger equations, Dirac equations
%% %37K40 % Soliton theory, asymptotic behavior of solutions
%% %37N20 % Dynamical systems in other branches of physics (quantum mechanics, general relativity, laser physics
%% \sep 81Q05 % Closed and approximate solutions to the Schrödinger, Dirac, Klein-Gordon and other quantum-mechanical equations
%% %35B32\sep
%% % 00-01\sep  99-00
%% \end{keyword}

\section{Introduction}

In the present work, we study the discrete spectrum
of linearization of nonlinear Dirac models.
The analysis of the discrete spectrum is crucial
for the question
of the dynamical stability of solitary wave solutions.
While this question is well-understood in many cases
for both the nonlinear Schr\"odinger
and Klein--Gordon equations (see e.g. the review
\cite{MR1032250}),
there are numerous open questions
for systems with Hamiltonians which are
sign-indefinite,
such as the nonlinear Dirac equation or the Dirac--Maxwell system.

The idea to consider self-interacting spinor field
has been studied in Physics
for a long time,
starting with the pioneering work of Ivanenko~\cite{jetp.8.260}
and then followed up in
\cite{PhysRev.83.326,PhysRev.103.1571,RevModPhys.29.269}.
Widely known are
the massive Thirring model~\cite{MR0091788}
(spinor field with the vector self-interaction)
and the Soler model~\cite{PhysRevD.1.2766}
(spinor field with the scalar self-interaction).
The one-dimensional analogue of the Soler model
is known as the (massive) Gross--Neveu model
\cite{1974PhRvD..10.3235G,PhysRevD.12.3880}.

In the past two decades there has been an increasing interest
in the nonlinear Dirac equation.
The bibliography is now so extensive
that we do not hope to cover it comprehensively,
only giving a very brief account.
The existence of standing waves in the nonlinear Dirac equation was
studied in
\cite{PhysRevD.1.2766,MR847126,MR949625,MR1344729}.
Local and global well-posedness
of the nonlinear Dirac equation
was further addressed in
\cite{MR1434039}
(semilinear Dirac equation in (3+1)D)
and in~\cite{MR2108356}
(nonlinear Dirac equation in (3+1)D).
There are many results
on the local and global well-posedness in (1+1)D;
we mention
\cite{MR2588476,MR2666663,MR2829499,MR2883845,MR3063953}.

The stability of solitary wave solutions
of the nonlinear Dirac equation
was approached via numerical simulations
\cite{PhysRevD.10.517,PhysRevLett.50.1230,
PhysRevD.34.644,MR2892774,PhysRevE.86.046602,MR3066202}
and via heuristic arguments
\cite{MR592382,MR832537,MR848095,PhysRevD.36.2422,PhysRevE.82.036604},
based on the analysis of whether the energy functional
is minimized under the charge constraint
with respect to dilations and other families of perturbations.
The spectrum of the linearization at solitary waves
of the nonlinear Dirac equation in (1+1)D
was computed in~\cite{chugunova-thesis,MR2892774},
suggesting
the absence of eigenvalues with positive real part
for linearizations at
small amplitude solitary waves;
we will say that such solitary waves
are \emph{spectrally stable}.
The numerical simulations of the evolution
of perturbed solitary waves
\cite{PhysRevE.86.046602}
suggest that the small amplitude solitary waves
in (1+1)D nonlinear Dirac equation
are also \emph{dynamically} stable
(or \emph{nonlinearly stable}).
For the massive
Thirring model, which is completely integrable,
the orbital stability was proved by means
of a coercive conservation law in \cite{MR3147657,MR3462129}.
The asymptotic stability of small amplitude solitary waves
in the external potential
has been studied in~\cite{MR2259214,MR2466169,MR2985264}.

Given a real-valued function $f\in C(\R)$, $f(0)=0$,
we consider
the following nonlinear Dirac equation
in $\R\sp n$, $n\ge 1$,
which is known as the Soler model~\cite{PhysRevD.1.2766}:
\begin{equation}\label{nld-0}
 \jj \p\sb t\psi=D\sb m\psi-f(\psi\sp\ast\beta\psi)\beta\psi,
\qquad
\psi(x,t)\in\C^N,
\quad
x\in\R\sp n.
\end{equation}
Above,
$D\sb m=-\jj \bm\alpha\cdot\bm\nabla+\beta m$
is the free Dirac operator.
Here $\bm\alpha=(\alpha\sp\jmath)\sb{1\le \jmath \le n}$,
with
$\alpha\sp\jmath$ and $\beta$
the self-adjoint $N\times N$ Dirac matrices
(see Section~\ref{sec:notations} for the details);
$m>0$ is the mass.
We are interested
in the stability properties
of the solitary wave solutions to \eqref{nld-0}:
\begin{equation}\label{sw}
\psi(x,t)=\phi\sb\omega(x)e\sp{-\jj \omega t},
\end{equation}
where the amplitude $\phi\sb\omega$ satisfies
the stationary equation
\[
 \omega \phi\sb\omega=D\sb m\phi\sb\omega-f(\phi\sb\omega\sp\ast\beta\phi\sb\omega)\beta\phi\sb\omega.
\]
In the present work, we study the \emph{spectral stability}
of solitary waves in the nonlinear Dirac equation.
Given a particular solitary wave \eqref{sw},
we consider its perturbation,
$\big(\phi\sb\omega(x)+\rho(x,t)\big)e\sp{-\jj \omega t}$,
and study the spectrum of the linearized equation on $\rho$.

\begin{definition}\label{def:linearinstability}
We will say that a particular solitary wave is
\emph{spectrally stable}
if the spectrum of the equation linearized at this wave
does not contain points with positive real part.
\end{definition}

\begin{remark}
Sometimes it is convenient
to include a further requirement
that the operator corresponding
to the linearization at this wave
does not contain
$4\times 4$ Jordan blocks at $\lambda=0$
and no $2\times 2$ Jordan blocks at
$\lambda\in\jj\R\setminus \{0\}$;
such blocks are expected to lead to dynamic instability.
Related definitions of linear instability are in
\cite{MR3180733,MR2924465}.
\end{remark}

In the context of spectral stability, the well-posedness of
the initial value problem associated with~\eqref{nld-0} is not crucial.
However in the investigation of the dynamical or Lyapunov stability the
local well-posedness would be essential. Once again the complete account of the works on this subject is beyond our objectives but
we briefly mention some of the classical results related to the three-dimensional case.
Escobedo and Vega~\cite{MR1434039} proved the local well-posedness in the $H^s$-setting with $s>(n-1)/2$. The charge-critical scaling power (at least in the massless case) being $(n-1)/2$, some works have been devoted to reach this endpoint. For instance, Machihara, Nakamura, Nakanishi and Ozawa show in \cite{MR2108356} that the $H^1$ regularity in the radial variable with an arbitrarily small regularity in the angular one is sufficient. This for instance settles the $H^1$-well-posedness problem
for radially symmetric initial data. This can for instance solve the problem for
initial data of the form \eqref{as-above} for \eqref{nld-0} in dimension $3$.
For \eqref{nld-0}, the non-linearity presents some null-structure which was
exploited by Bejenaru and Herr in \cite{MR3314499, MR3477346} to prove the local
well-posedness in the $H^1$ setting for $n=3$ and in the $H^{1/2}$ setting for $n=2$. Similar results have been obtained by
Candy~\cite{MR2829499} for the massive Thirring model
and
by Bournaveas and Candy~\cite{BournaveasCandy}
in the massless case
for both the Thirring and Soler models,
in dimension $2$ and $3$.

Once the spectral stability is known,
one hopes to prove the \emph{asymptotic stability}
of solitary waves using the dispersive estimates,
similarly to how this has been done for the nonlinear
Schr\"odinger equation.
First results in this direction
are already
appearing: in~\cite{MR2924465},
the asymptotic stability is proved
for the translation-invariant nonlinear Dirac equation
case in (3+1)D,
with the spectral stability assumption
playing a crucial role.
We point out that
due to the strong indefiniteness of the Dirac operator
(the energy conservation does not lead to any bounds
on the $H^{1/2}$-norm),
we do not know how to prove the \emph{orbital stability}
\cite{MR901236}
but via proving the asymptotic stability first.
The only exception is
the completely integrable massive Thirring model in (1+1)D,
where additional conserved quantities
arising from the complete integrability
allow to prove orbital stability of
solitary waves \cite{MR3147657,MR3462129}.
In the absence of spectral stability,
one expects to be able to prove \emph{orbital instability},
in the sense of~\cite{MR901236};
in \cite{MR2916078}, such instability is proved
in the context of the nonlinear Schr\"odinger equation,
in a very general situation.

\medskip

Since the isolated eigenvalues
depend continuously on the perturbation,
it is convenient to trace the location of ``unstable'' eigenvalues
(eigenvalues with positive real part)
for linearization at the solitary waves
$\phi\sb\omega e\sp{- \jj \omega t}$
considering $\omega$ as a parameter.
For example, if one knows that solitary waves
with $\omega$ in a certain interval
are spectrally stable,
one wants to know how and when the ``unstable'' eigenvalues
may emerge from the imaginary axis.

\medskip

\noindent
We distinguish the following scenari of the
development of linear instability:
\begin{enumerate}
\item
As $\omega$ changes,
two purely imaginary eigenvalues collide at $\lambda=0$
and then turn into a pair of one positive and one negative
eigenvalues.
In particular, such a collision happens
at the value of $\omega$
when
$dQ(\omega)/d\omega=0$,
which is the Vakhitov--Kolokolov condition \cite{VaKo},
or when $E(\omega)=0$;
see \cite{MR3311594}.
Above, $E(\omega)$ and $Q(\omega)$ are
the energy and charge of a solitary wave
$\phi\sb\omega e^{-\jj\omega t}$.
\item
The eigenvalues with positive real part
could bifurcate off the imaginary axis,
from the collision of purely imaginary
eigenvalues at some point in the spectral gap
but away from the origin.
Such a phenomenon has been observed
in the nonlinear Dirac equation
with cubic and quintic nonlinearities
in two dimensions
\cite{2015arXiv151203973C}.
%% Although this scenario seems plausible,
%% numerically we have not observed yet such examples
%% for linearizations at solitary waves;
%% neither have we any simple criterion to characterize
%% the corresponding values of $\omega$.
\item
The eigenvalues with positive real part
could bifurcate from the essential spectrum.
We will address this scenario in the present work.
We will prove that
when $\abs{\omega}<m$,
the bifurcations from
$\lambda\in\jj\R$, $\abs{\lambda}<m+\abs{\omega}$,
are only possible when $\lambda$ is an eigenvalue.
We will also show that
such bifurcations are impossible from $\lambda\in\jj\R$
with $\abs{\lambda}>m+\abs{\omega}$,
that is,
outside of the region bounded by the embedded thresholds.
%or when it is an embedded threshold,
%$\lambda=\pm\jj(m+\abs{\omega})$.
See Theorem~\ref{theorem-b}
and Remark~\ref{remark-nop}
below.

Let us mention related results.
The bifurcations from the embedded thresholds
$\pm(m+\abs{\omega})\jj$
have been observed in \cite{MR2217129}
in the context of coupled-mode equations.
At $\omega=0$, the bifurcations from
the collision of thresholds at $\lambda=\pm m \jj$
were shown in~\cite{MR1897705}
in the context of the massive Thirring model.
%Let us mention the related results.
The bifurcation of a pair of a positive and a
negative eigenvalues at $\omega=m$
from the threshold $\lambda=0$
is shown in~\cite{MR3208458}
in the case when
$dQ(\omega)/d\omega>0$ for $\omega\lesssim m$,
in a formal agreement
with the Vakhitov--Kolokolov stability criterion \cite{VaKo}.
We do not know yet examples
of bifurcations from embedded eigenvalues.

We expect that in the
non-relativistic limit $\omega\to m$
the bifurcation of eigenvalues with nonzero real part
from $\lambda=0$ is completely characterized
by the Vakhitov--Kolokolov stability criterion:
that is, we expect that there are no such
bifurcations if $dQ(\omega)/d\omega\le 0$
for $\omega\lesssim m$.
We will address this scenario elsewhere.
\item
One might speculate that
at some value of $\omega$,
the eigenvalues with positive real part
could arrive from $\pm \jj\infty$.
We expect this never happens;
we will address this scenario elsewhere.
\end{enumerate}

While the nonlinear Dirac equation
is only a simplified model of self-interacting spinor fields,
our approach to the spectral stability of solitary waves
in the nonlinear Dirac equation is also applicable
to the Dirac--Maxwell system~\cite{MR0190520,wakano-1966}.
The local well-posedness of the Dirac--Maxwell system
was proved in~\cite{MR1391520}.
The existence of standing waves
$\phi(x)e^{-\jj\omega t}$
in the Dirac--Maxwell system
is proved in \cite{MR1386737} (for $\omega\in(-m,0)$)
and in~\cite{MR1618672} (for $\omega\in(-m,m)$);
for an overview of these results, see~\cite{MR1897689}.
%% According to the nonrelativistic asymptotics
%% ($\omega\gtrsim -m$)
%% from~\cite{dm-existence},
%% one expects that the solitary waves with $\omega$
%% sufficiently close to $-m$ may be spectrally stable.
We hope that the spectral stability
in this model could be approached by our methods.
Another situation where our methods are applicable
is the analysis of stability of gap solitons
in nonlinear coupled-mode equations.
Such systems appear
in the context of photonic crystals
\cite{MartijnDeSterke1994203},
where they
describe counter-propagating light waves
interacting with a linear grating in optical waveguides
made of material with periodically changing refractive index
\cite{1996PhRvE..54.1969D,MR1825812}.
Coupled-mode systems also describe
matter-wave Bose--Einstein condensates trapped in an optical lattice
\cite{PhysRevE.70.036618}.
The numerical analysis
of the spectrum of the linearizations
at the gap solitons
is performed in~\cite{PhysRevLett.80.5117,MR2217129}.
The stability analysis of
small-amplitude gap solitons
based on the study of bifurcations
from the embedded eigenvalues
of the linear equation
in the external potential is
in~\cite{MR2513792}.

\bigskip

Here is the plan of the present analysis.
The results are stated in Section~\ref{sect-results}.
Our main tools,
the Carleman--Berthier--Georgescu estimates,
are derived in Section~\ref{sect-carleman}.
The exponential decay
of solitary waves and of eigenfunctions
is proved in Section~\ref{sect-exp}.
Bifurcation of eigenvalues
from the essential spectrum is considered
in Sections~\ref{sect-b}.
The unique continuation
principle for the Dirac operator
from \cite[Appendix]{MR880983},
adapted to our needs,
is in Appendix~\ref{Sec:UniqueContinuation};
incidentally,
our Theorem~\ref{theorem-ln}
proves the conjecture put forward in \cite{MR865834}
on unique continuation principle for the Dirac operator $D_m+V$
with $V\in L_{\mathrm{loc}}^q(\R^n)$,
$q\ge n$ for $n\ne 2$ and $q>2$ for $n=2$.

\subsection{Notations}\label{sec:notations}

We denote the free Dirac operator by
\[
D\sb m
=-\jj \bm\alpha\cdot\bm\nabla+\beta m,
\qquad
m>0.
\]
We will also use the notation
$
D_0=-\jj \bm\alpha\cdot\bm\nabla.
$
Here
$
\bm\alpha\cdot\bm\nabla
=\sum\sb{\jmath=1}\sp{n}\alpha\sp\jmath\frac{\p}{\p x\sp\jmath},
$
with
$\alpha\sp\jmath$ and $\beta$
being hermitian $N\times N$ Dirac matrices
which satisfy
\[
\alpha\sp\jmath\alpha\sp k
+\alpha\sp k\alpha\sp\jmath=2\delta\sb{\jmath k}I\sb N,
\ \quad
\alpha\sp\jmath\beta+\beta\alpha\sp\jmath=0,
\ \quad
%%(\alpha\sp\jmath)\sp 2=
\beta\sp 2=I\sb N,
\ \quad
1\le\jmath,k\le n.
\]
$I\sb N$ is the $N\times N$ identity matrix.
The anticommutation relations lead to
\[
\tr\alpha\sp\jmath
=\tr\beta^{-1}\alpha\sp\jmath\beta=-\tr\alpha\sp\jmath=0,
\qquad
1\le\jmath\le n,
\]
and similarly $\tr\beta=0$;
together with $\sigma(\alpha\sp\jmath)=\sigma(\beta)=\{\pm 1\}$,
this yields the conclusion that $N$ is even.
Let us also mention that
the spatial dimension $n$ and the number
of spinor components $N$
satisfy the relation $N\ge 2^{[(n+1)/2]}$
\cite[Chapter 1, \S5.3]{MR1401125}.

We denote $r=\abs{x}$
for $x\in\R^n$,
and, abusing notations,
we will also denote the operator of multiplication
with $\abs{x}$ and $\langle x\rangle=(1+\abs{x}^2)^{1/2}$
by $r$ and $\langle r\rangle$, respectively.

The Fourier transform
$\mathcal{F}:\,\mathscr{S}(\R^n)\to\mathscr{S}(\R^n)$
is defined by
\[
(\mathcal{F}u)(\xi)=\int\sb{\R^n} e^{-\jj\xi\cdot x}u(x)\,dx,
\qquad
(\mathcal{F}^{-1}v)(x)=\int\sb{\R^n} e^{\jj\xi\cdot x}v(\xi)\,\frac{d\xi}{(2\pi)^n},
\]
and extended by duality to
$\mathscr{S}'(\R^n)\to\mathscr{S}'(\R^n)$.

Given an open subset $\Omega$ of $\R\sp n$,
we denote the standard $L^2$-based
Sobolev spaces by
$H\sp k(\Omega,\C^N)$.
We denote
$H^\infty(\Omega,\C^N)=\cap\sb{k\in\N}H\sp k(\Omega,\C^N)$.
We use the notation $H\sp k\sb{\mathrm{comp}}(\Omega,\C^N)$
for the subset of $H\sp k(\Omega,\C^N)$
made up by function with compact support in $\Omega$.
Its closure in $H\sp k(\Omega,\C^N)$ is denoted $H\sp k\sb 0(\Omega,\C^N)$.

The notation $H\sp k\sb{\mathrm{loc}}(\Omega,\C^N)$
is used
for the set of function $u\in L\sp{2}(\Omega,\C^N)$ such that
$\eta u \in H\sp{k}(\Omega,\C^N)$ for
any scalar smooth compactly supported function $\eta$ on $\Omega$.
The notation $H\sp k\sb{0,\mathrm{loc}}(\Omega,\C^N)$ stands for the set of
functions $u\in L\sp{2}(\Omega,\C^N)$ such that
$\eta u \in H\sp k\sb 0(\Omega,\C^N)$
for any scalar smooth compactly supported function $\eta$ on $\R\sp n$.

For $s,\,k\in\R$,
we define the weighted Sobolev spaces
\begin{equation}
H\sp k\sb{s} (\R\sp n,\C^N)=\left\{ u\in\mathscr{S}'(\R\sp n,\C^N),\,
\norm{u}\sb{H\sp k\sb s}
<\infty\right\},
\qquad
\norm{u}\sb{ H\sp k\sb s}=
\norm{\langle r\rangle\sp s\langle P\rangle\sp k u}\sb{L\sp 2},
\end{equation}
with $\langle P\rangle$
understood as the multiplication by $\sqrt{1+\xi^2}$
on the Fourier transform side.

We write
$L\sp{2}\sb{s}(\R\sp n,\C^N)$
for~$H\sp{0}\sb{s}(\R\sp n,\C^N)$.
For $u\in L\sp 2(\R\sp n,\C^N)$,
we denote $\norm{u}=\norm{u}\sb{L\sp 2}$.

For any pair of normed vector spaces $E$ and $F$,
let $\mathscr{B}(E,F)$
denote the set of bounded linear maps from $E$ to $F$.

For an unbounded linear operator $A$
acting in a Banach space $X$
with a dense domain $D(A)\subset X$,
the spectrum $\sigma(A)$
is the set of values $\lambda \in\C$ such that
the operator $A-\lambda:\;D(A)\to X$
does not have a bounded inverse.
The generalized null space of $A$ is defined by
\[
\mathscr{N}\sb g(A)
:=
\mathop{\cup}\limits\sb{k\in\N} \ker(A^k)
=
\mathop{\cup}\limits\sb{k\in\N}
\{
v\in D(A)\sothat
A^j v\in D(A)\ \forall j<k,
\ A^k v=0
\}.
\]
The discrete spectrum $\sigma\sb{\mathrm{disc}}(A)$ is the
set of isolated eigenvalues
$\lambda\in\sigma(A)$
of finite algebraic multiplicity,
\[
\dim\mathscr{N}\sb g(A-\lambda)<\infty.
\]
The essential spectrum $\sigma\sb{\mathrm{ess}}(A)$ is the
complementary set of discrete spectrum in the spectrum.
The point spectrum $\sigma\sb{\mathrm{p}}(A)$
is the set of eigenvalues (isolated or embedded
into the essential spectrum).

For $R>0$, we denote
\[
\mathbb{B}^n\sb R(x_0)
=\{x\in\R\sp n\sothat\abs{x-x_0}<R\},
\qquad
\mathbb{B}^n\sb R
=\mathbb{B}^n\sb R(0).
\]
We also use the following notation for the complement to the closure
of a ball:
\begin{equation}\label{def-varOmega}
\varOmega\sb R
=
\R^n\setminus\overline{\mathbb{B}^n\sb R}
=
\{x\in\R\sp n\sothat\abs{x}>R\}.
\end{equation}

\subsection*{Acknowledgments}
 Support from the grant
ANR-10-BLAN-0101 of the French Ministry of Research is gratefully
acknowledged by the first author.
The second author was partially supported by
Universit\'e Franche-Comt\'e
and by the Russian Foundation for Sciences (project No. 14-50-00150).

\section{Main results}
\label{sect-results}

We consider the nonlinear Dirac equation \eqref{nld-0},
\begin{equation}\label{nld}\tag{2.1; NLDE}
 \jj \p\sb t\psi=D\sb m\psi-f(\psi\sp\ast\beta\psi)\beta\psi,
\qquad
\psi(x,t)\in\C^N,
\quad
x\in\R\sp n.
\end{equation}
\setcounter{equation}{1}
The nonlinearity is such that
the equation is $\mathbf{U}(1)$-invariant
and hamiltonian,
with the Hamiltonian functional given by
\begin{equation}\label{def-E}
E(\psi)=\int\sb{\R^n}
\big(
\psi\sp\ast D\sb m\psi-F(\psi\sp\ast\beta\psi)
\big)\,dx,
\qquad
\mbox{where}
\quad
F(s)=\int\sb 0\sp s f(t)\,dt.
\end{equation}
The charge conserved due to the $\mathbf{U}(1)$-invariance
is given by
\begin{equation}\label{def-q}
Q(\psi)
=\int\sb{\R^n}\psi\sp\ast(x,t)\psi(x,t)\,dx.
\end{equation}

%%%\subsection{Linearization at a solitary wave}

If $\phi\sb\omega(x)e^{-\jj\omega t}$ is a solitary wave solution
to \eqref{nld},
then the profile $\phi\sb\omega$ satisfies the stationary equation
\begin{equation}\label{nld-stationary}
\omega\phi\sb\omega
=D\sb m\phi\sb\omega-f(\phi\sb\omega\sp\ast\beta\phi\sb\omega)\beta\phi\sb\omega.
\end{equation}
Below, we assume that $\phi\sb\omega$ is localized in the sense that $\phi\sb\omega\in L^2(\R^n,\C^N)$.

To consider the linearization at this solitary wave,
we assume that in \eqref{nld} one has $f\in C^1(\R)$,
and consider the solution in the form of the Ansatz
\[
\psi(x,t)=(\phi\sb\omega(x)+\rho(x,t))e\sp{-\jj \omega t}.
\]
The linearization at a solitary wave
is then the linearized equation on $\rho$, given by
\begin{equation}\label{nld-linear-0}
\jj\p\sb t\rho=\mathcal{L}(\omega)\rho,
\end{equation}
where
\begin{equation}\label{def-cal-l}
\mathcal{L}(\omega)
=D\sb m
-\omega-f(\phi\sb\omega\sp\ast\beta\phi\sb\omega)\beta
-2f'(\phi\sb\omega\sp\ast\beta\phi\sb\omega)
\beta\phi\sb\omega
\Re(\phi\sb\omega\sp\ast\beta\,\,\cdot\,).
\end{equation}
The operator
$\mathcal{L}(\omega)$
is not $\C$-linear because of the term
with $\Re(\phi\sb\omega\sp\ast\beta\,\,\cdot\,)$.
To work with $\C$-linear operators,
we introduce the matrices representing the action of
$\alpha\sp\jmath$ with $1\le\jmath\le n$, $\beta$, and $-\jj$
on $\R^{2N}$-valued functions:
\[
\bmupalpha\sp\jmath
=
\begin{bmatrix}
\Re\alpha\sp\jmath&-\Im\alpha\sp\jmath\\\Im\alpha\sp\jmath&\Re\alpha\sp\jmath
\end{bmatrix},
%\quad
%1\le\jmath\le n;
%\]
%\[
\qquad
\bmupbeta
=
\begin{bmatrix}\Re\beta&-\Im\beta\\\Im\beta&\Re\beta\end{bmatrix},
\qquad
\eubJ
=\left[\begin{matrix}0&I\sb N\\-I\sb N&0\end{matrix}\right],
\]
where the real part of a matrix is the matrix
made of the real part of its entries
(and similarly for the imaginary part of a matrix).

When
$\phi\sb\omega(x)e\sp{-\jj \omega t}$
is a solitary wave solution to \eqref{nld},
the profile $\phi\sb\omega$ satisfies \eqref{nld-stationary};
this means that
\[
\bmupphi\sb\omega
=\begin{bmatrix}\Re\phi\sb\omega\\\Im\phi\sb\omega\end{bmatrix}
\in\R^{2N}
\]
satisfies
\begin{equation}\label{nld-stationary-bm}
\eubL\sb{-}(\omega)\bmupphi\sb\omega
=0,
\end{equation}
where
\begin{equation}\label{def-lm}
\eubL\sb{-}(\omega)
=
\eubD\sb m-\omega
-f(\bmupphi\sb\omega\sp\ast\bmupbeta\bmupphi\sb\omega)\bmupbeta,
\end{equation}
with
\begin{equation}\label{def-dm}
\eubD\sb m=\eubJ \bmupalpha\cdot\bm\nabla+m\bmupbeta
\end{equation}
representing the action of $D\sb m$
on $\R\sp{2N}$-valued functions.
The action of the operator \eqref{def-cal-l}
is represented in $\R\sp{2N}$ by
\begin{equation}\label{def-ll}
\eubL(\omega)
=
\eubD\sb m-\omega
-f(\bmupphi\sb\omega\sp\ast\bmupbeta\bmupphi\sb\omega)\bmupbeta
-2(\bmupphi\sb\omega\sp\ast\bmupbeta\,\,\cdot\,)
f'(\bmupphi\sb\omega\sp\ast\bmupbeta\bmupphi\sb\omega)
\bmupbeta\bmupphi\sb\omega.
\end{equation}
Recall that we assume $f\in C^1(\R)$\footnote{One can proceed with the linearization
assuming only Fr\'echet or even only G\^ateaux differentiability.},
hence under an assumption on $\phi_\omega$
such as
\[
\phi\sb\omega\in L^\infty(\R^n,\C^N),
\]
both $\eubL\sb{-}$ and $\eubL$
are closed and self-adjoint on the domain
\begin{equation}\label{def-xx}
\scrX=H\sp 1(\R\sp n,\C\sp{2N})
=H\sp 1(\R\sp n,\C\otimes\sb{\R}\R\sp{2N})
\end{equation}
to which they are extended by $\C$-linearity.
Thus, the linearization \eqref{nld-linear-0}
at the solitary wave
takes the form
\begin{equation}\label{nld-linear}
\p\sb t\bmuprho
=\eubJ\eubL(\omega)\bmuprho,
\qquad
\bmuprho(x,t)=\left[\begin{matrix}\Re\rho(x,t)\\\Im\rho(x,t)\end{matrix}\right]
\in\R\sp{2N}.
\end{equation}

Let us present some general results
on the point spectrum of the linearization operator.

\begin{lemma}\label{lemma-sym}
The operator $\eubJ\eubL$ is closed
and its spectrum $\sigma(\eubJ\eubL)$
is symmetric with respect to the real and imaginary axes.
\end{lemma}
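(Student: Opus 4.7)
The plan is to establish two independent symmetries, each coming from a different feature of the operator: reality of the realified operator gives symmetry across $\R$, while the Hamiltonian structure $\eubJ^\ast=-\eubJ$, $\eubL^\ast=\eubL$ gives symmetry across $\jj\R$. Together they give the full two-axis symmetry.

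First I would observe that $\eubJ\eubL$ is the $\C$-linear extension to $H^1(\R^n,\C^{2N})$ of a genuinely \emph{real} operator acting on $H^1(\R^n,\R^{2N})$. Indeed, although the matrices $\bmupalpha^\jmath$ contain explicit factors of $\jj$, these are cancelled by the prefactor $-\jj$ appearing in $\eubD_m=-\jj\bmupalpha\cdot\bm\nabla+m\bmupbeta$, so $\eubD_m$ preserves $\R^{2N}$-valued functions; the remaining terms in $\eubL$ (namely $-\omega$, $f(\bmupphi_\omega^\ast\bmupbeta\bmupphi_\omega)\bmupbeta$ and the rank-one piece involving $\bmupphi_\omega$) are manifestly real. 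Hence $\eubJ\eubL$ commutes with complex conjugation $\mathcal{C}:\zeta\mapsto\bar\zeta$. From $\mathcal{C}(\eubJ\eubL-\lambda)\mathcal{C}^{-1}=\eubJ\eubL-\bar\lambda$ it follows that $\lambda$ is in the resolvent set iff $\bar\lambda$ is, so $\spec(\eubJ\eubL)=\overline{\spec(\eubJ\eubL)}$, giving symmetry with respect to the real axis.

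Next I would exploit the Hamiltonian (``symplectic'') structure. Since $\bmupalpha^\jmath$, $\bmupbeta$, and the rank-one term are self-adjoint in $L^2(\R^n,\C^{2N})$ (the latter because $\bmupphi_\omega^\ast\bmupbeta=(\bmupbeta\bmupphi_\omega)^\ast$ and the coefficient $2f'(\bmupphi_\omega^\ast\bmupbeta\bmupphi_\omega)$ is real), the operator $\eubL$ is self-adjoint. Moreover $\eubJ^\ast=-\eubJ$ and $\eubJ^2=-I$, so $\eubJ^{-1}=-\eubJ$. Then
\begin{equation*}
(\eubJ\eubL)^\ast=\eubL^\ast\eubJ^\ast=-\eubL\eubJ,
\qquad
\eubJ(\eubJ\eubL)\eubJ^{-1}=\eubL\eubJ=-(\eubJ\eubL)^\ast,
\end{equation*}
i.e.\ $\eubJ\eubL$ is similar to $-(\eubJ\eubL)^\ast$. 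Using $\spec(A^\ast)=\overline{\spec(A)}$ and invariance of the spectrum under similarity, this yields $\spec(\eubJ\eubL)=-\overline{\spec(\eubJ\eubL)}$, which is symmetry with respect to $\jj\R$.

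Combining the two symmetries, $\lambda\in\spec(\eubJ\eubL)$ implies $\bar\lambda\in\spec(\eubJ\eubL)$ (real-axis symmetry) and hence $-\lambda=-\overline{\bar\lambda}\in\spec(\eubJ\eubL)$ (imaginary-axis symmetry applied to $\bar\lambda$), completing the proof. There is no substantive obstacle here; the only point to be careful about is that the similarity/conjugation identities above hold at the level of closed operators with the natural common domain $H^1(\R^n,\C^{2N})$, which follows from $\eubJ$ and $\mathcal{C}$ being bounded isomorphisms preserving that domain, so that all three identities $\spec(\cdot)=\overline{\spec(\cdot)}$, $\spec(\cdot)=\spec(\eubJ(\cdot)\eubJ^{-1})$, and $\spec(A^\ast)=\overline{\spec(A)}$ apply verbatim.
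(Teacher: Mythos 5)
Your proof is correct, and for one of the two symmetries it takes a genuinely different route from the paper. Both you and the paper obtain the symmetry $\lambda\mapsto-\bar\lambda$ (reflection across $\jj\R$) from the Hamiltonian identity $(-\eubJ\eubL)^\ast=\eubL\eubJ=\eubJ^{-1}(\eubJ\eubL)\eubJ$. Where you diverge is in obtaining the second symmetry. The paper works in the \emph{point} spectrum (after disposing of the essential spectrum via Lemma~\ref{lemma-ess}) and produces the reflection $\lambda\mapsto-\lambda$ by conjugating with a matrix $\bm\Gamma$ built from a self-adjoint $\Gamma$ that anticommutes with all $\alpha^\jmath$ and $\beta$ and fixes $\phi$; such a $\Gamma$ need not exist in the original spinor space, so the paper first embeds into a spinor space of larger dimension (invoking Lemma~\ref{lemma-big} to justify that this does not alter the spectrum). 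You instead obtain the reflection $\lambda\mapsto\bar\lambda$ directly from the observation that $\eubJ\eubL$ is the $\C$-linear extension of a real operator on $H^1(\R^n,\R^{2N})$ — exactly the structural feature that the realification $(\bmupalpha^\jmath,\bmupbeta,\eubJ,\bmupphi_\omega)$ was designed to expose — so complex conjugation commutes with it. Your route avoids the embedding into a higher-dimensional spinor space and the auxiliary matrix $\Gamma$, and it applies to the full spectrum at once rather than separately to the point spectrum; it is shorter and slightly more self-contained. Either pair of reflections $\{\lambda\mapsto-\bar\lambda,\ \lambda\mapsto-\lambda\}$ (paper) or $\{\lambda\mapsto-\bar\lambda,\ \lambda\mapsto\bar\lambda\}$ (yours) generates the full $\Z/2\times\Z/2$ symmetry of the claim.
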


\begin{proof}
The closedness is immediate.

Let $\lambda\in\sigma(\eubJ\eubL)$.
The inclusion
$\bar\lambda\in\sigma(\eubJ\eubL)$
follows from
$\eubJ\eubL$ acting invariantly in the subspace
$L^2(\R^n,\R^{2N})\subset L^2(\R^n,\C^{2N})$.
The inclusion
$-\bar\lambda\in\sigma(\eubJ\eubL)$ follows from $(-\eubJ\eubL)\sp\ast$
being conjugate to $\eubJ\eubL$:
\[
(-\eubJ\eubL)\sp\ast
=\eubL\sp\ast(-\eubJ)\sp\ast
=\eubL\eubJ
=\eubJ\sp{-1}(\eubJ\eubL)\eubJ.
\qedhere
\]
\end{proof}

For the reader's convenience,
we recall the results
\cite{MR3311594}
on the spectral subspace
of $\eubJ\eubL(\omega)$
(see \eqref{def-ll})
corresponding to the zero eigenvalue.

\begin{lemma}
[$0\in\sigma\sb{\mathrm{p}}(\eubJ\eubL)$]
\label{lemma-dim-ker-nld} Assume there exists an open interval $\mathcal{I}\subset(-m,m)$ such that for each $\omega \in\mathcal{I}$ \eqref{nld-stationary} has a solution
$\phi\sb\omega\in L^2_1(\R^n,\C^N)\cap H^1(\R^n,\C^N)$ such that the mapping
$\omega\in\mathcal{I}\mapsto \phi\sb\omega\in L^2(\R^n,\C^N)$ is differentiable.
Then one has:
\[
\mathop{\rm Span}\left\{
\eubJ\bmupphi\sb\omega,
\ \p\sb{x\sp\jmath}\bmupphi\sb\omega
\sothat
1\le \jmath\le n
\right\}
\subset
\ker\eubJ\eubL(\omega),
\]
\[
\mathop{\rm Span}\left\{
\eubJ\bmupphi\sb\omega,
\ \p\sb\omega\bmupphi\sb\omega,
\ \p\sb{x\sp\jmath}\bmupphi\sb\omega,
\ \bmupalpha\sp\jmath\bmupphi\sb\omega-2\omega x\sp\jmath\eubJ\bmupphi\sb\omega
\sothat
1\le \jmath\le n
\right\}
\subset
\mathscr{N}\sb g(\eubJ\eubL(\omega)).
\]
\end{lemma}

\begin{remark}
We do not claim
the complete characterization of $\ker\eubJ\eubL$
(in particular, there would be eigenvectors
which correspond to rotational symmetries).
According to \cite{VaKo},
the size of the Jordan block corresponding to
$\eubJ\bmupphi\sb\omega$ is exactly two
(nothing but $\eubJ\bmupphi\sb\omega$ and
$\p\sb\omega\bmupphi\sb\omega$)
as long as $\p\sb\omega Q(\omega)\ne 0$.
By \cite{MR3311594},
the size of the Jordan block
corresponding to each of $\p\sb{x\sb j}\bmupphi\sb\omega$
is exactly two
(nothing but
$\p\sb{x\sb j}\bmupphi\sb\omega$
and
$\bmupalpha\sp\jmath\bmupphi\sb\omega-2\omega x\sp\jmath\eubJ\bmupphi\sb\omega$)
as long as $E(\omega)\ne 0$,
with $E(\omega)=E(\phi\sb\omega e^{-\jj\omega t})$
defined in \eqref{def-E}.
\end{remark}

\begin{proof}
Recall that
the operators
$\eubL\sb{-}$, $\eubL$
are introduced in \eqref{def-lm} and \eqref{def-ll}.
We compute:
\begin{equation}\label{dvk-ast1}
\eubL(\eubJ\bmupphi\sb\omega)
=
\eubL\sb{-}(\eubJ\bmupphi\sb\omega)
-2(\bmupphi\sb\omega\sp\ast\bmupbeta(\eubJ\bmupphi\sb\omega))
f'(\bmupphi\sb\omega\sp\ast\bmupbeta\bmupphi\sb\omega)\bmupbeta\bmupphi\sb\omega
=0.
\end{equation}
Above,
the first term vanishes since $\bmupphi\sb\omega(x)$ satisfies
$\eubL\sb{-}\bmupphi\sb\omega=0$
(cf. \eqref{nld-stationary-bm})
and $\eubJ$ commutes with $\eubL\sb{-}$;
the second term vanishes since
the quantity
$\bmupphi\sb\omega\sp\ast\bmupbeta\eubJ\bmupphi\sb\omega$
is real-valued, while $\eubJ\bmupbeta$ is skew-adjoint.
Taking the derivative of
\eqref{nld-stationary-bm}
with respect to $x\sp\jmath$, $1\le \jmath\le n$, yields
\begin{equation}\label{dvk-ldxf}
0=
\eubL\sb{-}\p\sb{x\sp\jmath}\bmupphi\sb\omega
-2(\bmupphi\sb\omega\sp\ast\bmupbeta\p\sb{x\sp\jmath}\bmupphi\sb\omega)
f'(\bmupphi\sb\omega\sp\ast\bmupbeta\bmupphi\sb\omega)
\bmupbeta\bmupphi\sb\omega
=\eubL\p\sb{x\sp\jmath}\bmupphi\sb\omega.
\end{equation}
Now let us consider the
generalized null space.
Taking the derivative of \eqref{nld-stationary-bm}
with respect to $\omega$,
we get
\begin{equation}\label{dvk-ldof}
0=
\eubL\sb{-}\p\sb\omega\bmupphi\sb\omega
-\bmupphi\sb\omega
-2(\bmupphi\sb\omega\sp\ast\bmupbeta\p\sb\omega\bmupphi\sb\omega)
f'(\bmupphi\sb\omega\sp\ast\bmupbeta\bmupphi\sb\omega)
\bmupbeta\bmupphi\sb\omega
=\eubL\p\sb\omega\bmupphi\sb\omega-\bmupphi\sb\omega,
\end{equation}
which shows that $\p\sb\omega\bmupphi\sb\omega\in\mathscr{N}\sb
g(\eubJ\eubL(\omega))$.
Since
\[
\bmupphi\sb\omega\sp\ast\bmupbeta\bmupalpha\sp\jmath\bmupphi\sb\omega
+(\bmupalpha\sp\jmath\bmupphi\sb\omega)\sp\ast\bmupbeta\bmupphi\sb\omega
=\bmupphi\sb\omega\sp\ast\{\bmupbeta,\bmupalpha\sp\jmath\}\bmupphi\sb\omega=0,
\]
we have
\begin{equation}\label{dvk-hap1}
\eubL(\bmupalpha\sp\jmath\bmupphi\sb\omega)
=\eubL\sb{-}(\bmupalpha\sp\jmath\bmupphi\sb\omega)
=
2 \eubJ \p\sb{x\sp\jmath}\bmupphi\sb\omega
-2\omega\bmupalpha\sp\jmath\bmupphi\sb\omega
-\bmupalpha\sp\jmath\eubL\sb{-}\bmupphi\sb\omega
=2 \eubJ \p\sb{x\sp\jmath}\bmupphi\sb\omega
-2\omega\bmupalpha\sp\jmath\bmupphi\sb\omega.
\end{equation}
Similarly,
since
$
\bmupphi\sb\omega\sp\ast\bmupbeta( x\sp\jmath\eubJ\bmupphi\sb\omega)
+( x\sp\jmath\eubJ\bmupphi\sb\omega)\sp\ast\bmupbeta\bmupphi\sb\omega=0$,
\begin{equation}\label{dvk-hap2}
\eubL( x\sp\jmath\eubJ\bmupphi\sb\omega)
=\eubL\sb{-}( x\sp\jmath\eubJ\bmupphi\sb\omega)
=-\bmupalpha\sp\jmath\bmupphi\sb\omega
+ x\sp\jmath\eubJ\eubL\sb{-}\bmupphi\sb\omega
=-\bmupalpha\sp\jmath\bmupphi\sb\omega.
\end{equation}
Using \eqref{dvk-hap1} and \eqref{dvk-hap2},
we compute:
\begin{equation}\label{dvk-ast2}
\eubJ
\eubL(\bmupalpha\sp\jmath\bmupphi\sb\omega-2\omega
x\sp\jmath\eubJ\bmupphi\sb\omega)
=\eubJ(2 \eubJ \p\sb{x\sp\jmath}\bmupphi\sb\omega)
=-2\p\sb{x\sp\jmath}\bmupphi\sb\omega,
\end{equation}
which shows that
$\bmupalpha\sp\jmath\bmupphi\sb\omega-2\omega x\sp\jmath\eubJ\bmupphi\sb\omega
\in\mathscr{N}\sb g(\eubJ\eubL(\omega))$.
\end{proof}

\subsection{Essential spectrum}
\label{section:Weyl}

To be able to study the spectrum of the linearization
at a solitary wave,
we need to work with operators
of the form $JL$,
where
\begin{equation}\label{l-d-v}
L=D\sb m-\omega+V(x),
\qquad
V\in L^\infty\big(\R\sp n,\End(\C^N)\big),
\qquad
\omega\in[-m,m],
\end{equation}
where $\End(\C^N)$ denotes an endomorphism of $\C^N$,
while
$J\in\End(\C^{N})$ is skew-adjoint and invertible,
such that
\[
J^2=-I\sb{\C^N},
\qquad
[J,D_m]=0.
\]
The domain of $L$ is
$D(L)=H\sp 1(\R\sp n,\C^N)$.
The operator $L$ is closed.
When $V$ is hermitian-valued, $L$ is also self-adjoint;
but for the moment, we do not assume that $V$ is hermitian-valued.
The results on the spectrum of $JL$
are applicable to the operator
$\eubJ\eubL(\omega)$ from \eqref{nld-linear}
which describes the linearization at a solitary wave.
The only reason to change notations from $\eubJ\eubL$ to $JL$
is in order to change from $\C^{2N}$-valued spinors
to $\C^N$-valued ones.

\begin{lemma}\label{lemma-ess}
Let $n\ge 1$,
$\omega\in[-m,m]$, and assume that
\begin{equation}\label{v-relatively-compact-0}
V\in L^q(\R^n,\End(\C^{N})),
\qquad
\begin{cases}
2\le q<\infty,
\qquad n=1;
\\
2<q<\infty,
\qquad n=2;
\\
n\le q<\infty,
\qquad n\ge 3.
\end{cases}
\end{equation}
Then
\[
\sigma\sb{\mathrm{ess}}(J(D\sb m-\omega+V))
=\jj (\R\backslash(-m+\abs{\omega},m-\abs{\omega})).
\]
\end{lemma}

\begin{remark}
The above result
is similar to \cite[Chapter XIII, Problem 41]{MR0493421},
adapted to the case of the Dirac operator.
\end{remark}

\begin{proof}
We will give the proof for $n\ge 3$;
the cases $n\le 2$ are considered similarly.
By the Weyl theorem,
it is enough to prove that for
%% \[
%% z\in\C,
%% \qquad
%% z\not\in(-\jj\infty,-\jj (m-|\omega|)]\cup [\jj (m-|\omega|),+\jj\infty),
%% \]
%\[
$z\in\C,
$
%\qquad
$z\not\in\jj
\big(\R\setminus(|\omega|-m,\,m-|\omega|)\,\big),
$
%\]
the operator
$V\big(J(D\sb m-\omega)-z\big)^{-1}:\;L^2\to L^2$
is compact.

Let $2<p<\infty$ be such that
\begin{equation}\label{2-p-q}
\frac 1 2=\frac{1}{p}+\frac{1}{q}.
\end{equation}
Note that, in particular, for $n\ge 3$ one has
$p\le\frac{2n}{n-2}$.
Let $\chi$ be the characteristic function of the unit ball,
and set $\chi_j(x)=\chi(x/j)$, $j\in\N$.
Then
\[
\norm{(1-\chi_j)V\big(J(D\sb m-\omega)-z\big)^{-1}}
\sb{L^2\to L^2}
%\]
%\[
%\qquad\qquad\qquad
\le
\norm{(1-\chi_j)V}\sb{L^p\to L^2}
\norm{\big(J(D\sb m-\omega)-z\big)^{-1}}\sb{L^2\to L^p}
\]
is bounded
(the second factor in the right-hand side
is bounded due to the Sobolev embedding
$H^1(\R^n)\subset L^p(\R^n)$),
and moreover the norm
of the operator of multiplication by $(1-\chi_j)V$
goes to zero as $j\to\infty$:
\[
\norm{(1-\chi_j)V}\sb{L^p\to L^2}
\le
\norm{(1-\chi_j)V}\sb{L^q}
\mathop{\longrightarrow}\limits\sb{j\to\infty}0.
\]
Above, the inequality is due to
\eqref{v-relatively-compact-0} and \eqref{2-p-q}.
On the other hand, the operator
$\chi_j V\big(J(D\sb m-\omega)-z\big)^{-1}:\;L^2\to L^2$
is compact, since
\begin{equation}\label{j-d-z}
\chi\sb j\big(J(D\sb m-\omega)-z\big)^{-1}:
\;L^2(\R^n,\C^N)\to H^1(\R^n,\C^N)\subset L^{p}(\R^n,\C^N)
\end{equation}
is compact by the Rellich--Kondrachov compactness theorem,
while the operator of multiplication
by $V$ acts continuously from $L^p$ to $L^2$.
Therefore, the operator
\[
V\big(J(D\sb m-\omega)-z\big):\;L^2(\R^n,\C^N)\to L^2(\R^n,\C^N)
\]
is a limit of the sequence of compact operators
$\chi_j V\big(J(D\sb m-\omega)-z\big)$
in the uniform operator norm, and hence is also compact.

\medskip

Weyl's theorem on the essential spectrum
\cite[Theorem XIII.14, Corollary 2]{MR0493421}
applied to operators $\jj J(D\sb m-\omega)$
and $\jj J(D\sb m-\omega+V)$,
which are considered on the domain
$H\sp 1(\R\sp n,\C^N)$,
allows us to conclude that
\[
\sigma\sb{\mathrm{ess}}\big(J(D\sb m-\omega+V)\big)
=
\sigma\sb{\mathrm{ess}}\big(J(D\sb m-\omega)\big)
=
\big\{
\pm\jj z\sothat z\in\sigma\sb{\mathrm{ess}}(D\sb m-\omega)
%%=(-\infty,-m-\omega]\cup[m-\omega,+\infty)
\big\},
\]
with
\[
\sigma\sb{\mathrm{ess}}(D\sb m-\omega)
=\R\setminus(-m-\omega,m-\omega),
\]
where we took into account that $[J,D\sb m]=0$
and $\sigma(J)=\{\pm\jj\}$.
\end{proof}

\subsection{Carleman--Berthier--Georgescu estimates}

One of our main tools is
the three-dimensional Carleman--Berthier--Georgescu estimates
proved in~\cite[Theorem 5]{MR880983}
which we adapt to the context
of linearization at solitary waves
and generalize to any dimension.

Following the proof of \cite[Theorem 3]{MR880983},
we define the following set of admissible phase functions.

\begin{definition}\label{definition-c}
Let $\lambda\in\C$ and $\omega\in[-m,m]$
be such that
\begin{equation}\label{lambda-omega-m}
\abs{\lambda}-\abs{\omega}>m.
\end{equation}
Let $M\ge 1,\,\calN\ge 1,\,\rho\ge 1$ and $\nu>0$.
We define the subset
\[
\mathscr{C}\sb{\lambda,\omega}(M,\calN,\rho,\nu)\subset C^2(\R\sb{+})
\]
to be the set of functions which satisfy the following properties:
\begin{enumerate}\label{def-c}
\item
$0<\varphi'\le \calN r$, $\ \forall r\ge\rho$ ;
\item
$r\abs{\varphi''}\le M\varphi'$, $\ \forall r\ge\rho$ ;
\item
$(\abs{\lambda}-\abs{\omega})^2-m^2+\varphi'\sp 2+2r\varphi'\varphi''\ge \nu$,
$\ \forall r\ge\rho$ ;
\item
if $\varphi'$ is unbounded then
$\varphi''(r)\geq 0$ for $r\ge\rho$
and
$\lim_{r\to\infty}\varphi'(r)=+\infty$.
\end{enumerate}
We also denote
\[
\mathscr{C}\sb\lambda(M,\calN,\rho,\nu)
:=\mathscr{C}\sb{\lambda,0}(M,\calN,\rho,\nu).
\]
We define
\begin{equation}\label{def-Lambda-pm}
\Lambda\sb{+}=\abs{\lambda}+\abs{\omega},
\qquad
\Lambda\sb{-}=\abs{\lambda}-\abs{\omega}>m,
\end{equation}
and introduce the weight functions
\begin{equation}\label{def-mu-l}
\mu(r)
=
2
\Big(
n+16\Lambda\sb{+}^2 r^2+8r\varphi'(r)
\Big)^{1/2},
\qquad
r\ge\rho;
\end{equation}
\begin{equation}\label{def-g-l}
\gamma(r)=
\left(
\Lambda\sb{-}^2-m^2+\varphi'(r)^2+2r\varphi'(r)\varphi''(r)\right)^{1/2},
\qquad
r\ge\rho.
\end{equation}
\end{definition}

\begin{theorem}[Carleman--Berthier--Georgescu estimates for $JL$]
\label{theorem-Carleman-l}
Let $n\ge 1$, $m>0$,
$\omega\in[-m,m]$.
Let $J\in\End(\C^N)$ be skew-adjoint and invertible,
such that $J^2=-I\sb{\C^N}$, $[J,D_m]=0$.
Let
\[
\lambda\in\jj\R,
\qquad
\abs{\lambda}>m+\abs{\omega}.
\]
\begin{enumerate}
\item
\label{theorem-Carleman-l-i}
Assume that
$
\varphi\in\mathscr{C}\sb{\lambda,\omega}(M,\calN,\rho,\nu)
$
with some $M\geq 1,\,\calN\geq 1,\,\rho\ge 1$, and $\nu>0$.

Let
$V\in\mathscr{B}\left(H\sp 1\sb{\mathrm{comp}}(\varOmega\sb\rho,\C^N),\,L\sp 2(\varOmega\sb\rho,\C^N)\right)$ be a multiplication operator,
and assume that there are $\kappa\in[0,1)$
and $R_1=R_1(\varphi,V)\ge\rho$
such that
\begin{equation}\label{ineq-0-l}
\norm{\mu V v}
\le
\kappa
\left(
\norm{\bm\nabla v}^2
+
\norm{\gamma v}^2
\right)^{1/2},
\qquad \forall v\in H^1\sb 0(\varOmega_{R_1},\C^N).
\end{equation}
Then there is
$R=R(\varphi,V)\ge R_1$
such that
for any $u\in H\sp 1(\R\sp n,\C^N)$
with $\supp u\subset\varOmega\sb{R}$
and
\[
\mu e\sp\varphi (J(D\sb m-\omega+V)-\lambda)u
\in L\sp 2(\R\sp n,\C^N),
\]
the functions
$\gamma e\sp{\varphi}u$,
\ $\bm\nabla(e\sp{\varphi}u)$,
\ $(r\varphi')\sp{1/2}\partial\sb r(e\sp{\varphi}u)$
are in $L\sp 2(\R\sp n,\C^N)$, and moreover
\begin{eqnarray}\label{car-l}
\norm{\bm\nabla(e^\varphi u)}^2
+
2\norm{(r\varphi')^{1/2}\p\sb r(e^\varphi u)}^2
+
\norm{
\gamma
e\sp{\varphi}u}\sp 2
%\nonumber
%\\
\le
\frac{1}{(1-\kappa)^2}
\norm{
\mu
e\sp\varphi(J(D\sb m-\omega+V)-\lambda)u}\sp 2.
\end{eqnarray}
\item
\label{theorem-Carleman-l-ii}
Assume that there are $\kappa\in[0,1)$
and $R_2=R_2(V)$
such that
\begin{eqnarray}\label{Eq:VSmallInfinity-l}
&
\norm{\big(n+16\Lambda\sb{+}^2 r^2+8r f(r)\big)^{1/2} V v}^2
%\nonumber
%\\
%&
%\qquad
\le
\kappa^2
\left(
\norm{\bm\nabla v}^2
+
\norm{
\big(\Lambda\sb{-}^2-m^2+f(r)^2
\big)^{1/2}v}^2
\right),
\\[1ex]
&
\forall
f\in C(\R),
\quad
f\ge 0,
\quad
%%f=O(\langle r\rangle);
\sup\sb{r>0}\frac{f(r)}{\langle r\rangle}<\infty;
\qquad \forall v\in H^1\sb 0(\varOmega_{R_2},\C^N).
\nonumber
\end{eqnarray}
Let $M\ge 1$, $\calN\ge 1$, $\rho\ge 1$, and $\nu>0$.
Then there is
\[
R=R(M,\rho,m,n,\lambda,\omega,V)\ge\max(\rho,R_2(V))
\]
such that for any
$\varphi\in\mathscr{C}\sb{\lambda,\omega}(M,\calN,\rho,\nu)$
with
$\varphi''(r)\ge 0$, $r\ge\rho$,
and for any
\[
u\in H\sp 1\sb 0(\varOmega\sb{R},\C^N)
\]
which satisfies
\[
\mu e\sp\varphi (J(D\sb m-J+V)-\lambda)u \in L\sp 2(\R\sp n,\C^N),
\]
the functions
$\gamma e\sp{\varphi}u$,
\ $\bm\nabla(e\sp{\varphi}u)$,
\ $(r\varphi')\sp{1/2}\partial\sb r(e\sp{\varphi}u)$
are in $L\sp 2(\R\sp n,\C^N)$, and moreover
\begin{eqnarray}\label{carV-l}
\norm{\bm\nabla(e^\varphi u)}^2
+
2\norm{(r\varphi')^{1/2}\p\sb r(e^\varphi u)}^2
+
\norm{
\gamma e\sp{\varphi}u}\sp 2
%\nonumber
%\\
\le
\frac{1}{(1-\kappa)^2}\norm{
\mu
e\sp\varphi (J(D\sb m-\omega+V)-\lambda)u}\sp 2.
\end{eqnarray}
\end{enumerate}
\end{theorem}

\begin{remark}
That is,
in Theorem~\ref{theorem-Carleman-l}~\itref{theorem-Carleman-l-ii}
we state that if $\varphi''\ge 0$,
then $R(\varphi,V)$
from Theorem~\ref{theorem-Carleman-l}~\itref{theorem-Carleman-l-i}
depends on the class $\mathscr{C}\sb{\lambda,\omega}(M,\calN,\rho,\nu)$,
but not on a particular representative
$\varphi\in\mathscr{C}\sb{\lambda,\omega}(M,\calN,\rho,\nu)$.
\end{remark}

In Section~\ref{sect-carleman},
we state and prove
Theorem~\ref{theorem-Carleman},
which is a version of Theorem~\ref{theorem-Carleman-l}
for the Dirac operator $D_m+V$.
The proof of Theorem~\ref{theorem-Carleman-l}
follows from Remark~\ref{remark-c-l}.

\medskip

Let us also formulate a convenient condition on the potential $V$
which would be sufficient for the condition
\eqref{Eq:VSmallInfinity-l}
to be satisfied.

\begin{lemma}
\label{lemma-v-small-coulomb}
A sufficient condition
for
\eqref{Eq:VSmallInfinity-l}
to hold
for some $R_2<\infty$ is that
there are $\kappa'\in(0,\kappa)$ and $R<\infty$
such that
\begin{equation}\label{v-small-coulomb}
\abs{V(x)}
\le\kappa'\frac{\sqrt{\Lambda\sb{-}^2-m^2}}
{4\Lambda\sb{+}\abs{x}},
\qquad \forall\abs{x}\ge R,
\end{equation}
with $\Lambda\sb\pm=\abs{\lambda}\pm\abs{\omega}$
from \eqref{def-Lambda-pm}.
\end{lemma}

\begin{proof}
The inequality \eqref{Eq:VSmallInfinity-l}
follows if we have
\[
(n+16\Lambda\sb{+}^2 \abs{x}^2+8\abs{x} K) \abs{V(x)}^2
\le
\kappa^2\left(\Lambda\sb{-}^2-m^2+K^2\right),
\]
valid for $\abs{x}\ge R$
(with $R$ sufficiently large)
and all $K\ge 0$.
Thus, we need
\begin{equation}\label{twn}
\abs{V(x)}^2
\le
\kappa^2
\frac{\Lambda\sb{-}^2-m^2+K^2}
{n+16\Lambda\sb{+}^2 \abs{x}^2+8\abs{x} K},
\qquad
\forall K\ge 0.
\end{equation}
For $0\le K\le 1$, one has
\begin{equation}\label{atst}
\kappa^2
\frac{\Lambda\sb{-}^2-m^2+K^2}
{n+16\Lambda\sb{+}^2 r^2+8r K}
\ge
\kappa^2
\frac{\Lambda\sb{-}^2-m^2}
{n+16\Lambda\sb{+}^2 r^2+8r}
\ge
\kappa'^2
\frac{\Lambda\sb{-}^2-m^2}
{16\Lambda\sb{+}^2 r^2},
\end{equation}
with the last inequality satisfied
as long as $r\ge R$
with $R=R(m,n,\lambda,\kappa,\kappa')$ sufficiently large.
The inequality \eqref{atst}
together with \eqref{v-small-coulomb}
yields \eqref{twn},
and hence the desired inequality \eqref{Eq:VSmallInfinity-l}
in the case $K\in[0,1]$.
At the same time,
taking the derivative in $K$,
we see that the right-hand side of \eqref{twn},
considered as a function of $K$,
is strictly monotonically increasing for $K\ge 1$
as long as
$\abs{x}\ge R=R(m,n,\lambda,\omega)$
is sufficiently large.
This completes the proof.
\end{proof}

\subsection{Absence of embedded eigenstates}

The immediate consequence of the
Carleman--Berthier--Georgescu estimates
is the following result on the absence of embedded eigenstates
for $L=D_m-\omega+V$ and $JL$,
for rather general potentials $V$:

\begin{theorem}
\label{theorem-absence}
Let $n\ge 1$,
$\omega\in[-m,m]$,
and
$V\in L^n\sb{\mathrm{loc}}(\R^n,\End(\C^N))$.
%% \sout{with
%% $V(x)$
%% invertible for $x$ almost everywhere in $\R^n$}.
\begin{enumerate}
\item
\label{theorem-absence-i}
Let $\lambda\in\R\setminus[-m-\omega,m-\omega]$
and assume that there are $\kappa\in(0,1)$ and $R<\infty$
such that
\begin{eqnarray}\label{ass-v}
&
\norm{(n+16\lambda^2r^2+8r \tau)^{1/2}V v}^2
%\nonumber
%\\
%&&
%\qquad
\le
\kappa^2
\left(
\norm{\bm\nabla v}^2
+
\norm{((\lambda+\omega)^2-m^2+\tau^2)^{1/2}v}^2
\right),
\\[1ex]
&
\forall
\tau \geq 1,
\qquad \forall v\in H^1\sb 0(\varOmega\sb R,\C^N).
\nonumber
\end{eqnarray}
Then
%% $D_m+V$ has no embedded eigenvalues:
$
\lambda\not\in
\sigma\sb{\mathrm{p}}(D_m-\omega+V).
%% \cap
%% (\R\setminus[-m,m])
%% =\varnothing.
$
\item
\label{theorem-absence-ii}
Let $J\in\End(\C^N)$ be skew-adjoint and invertible,
such that $J^2=-I\sb{\C^N}$, $[J,D_m]=0$.
Let
\[
\lambda\in\R\setminus[-m-\abs{\omega},m+\abs{\omega}]
\]
and assume that there are $\kappa\in(0,1]$ and $R<\infty$
such that
\begin{eqnarray}\label{ass-v-jl}
&
\norm{(n+16\Lambda\sb{+}^2r^2+8r \tau)^{1/2}V v}^2
%% \nonumber
%% \\
%% &
%% \qquad
\le
\kappa^2
\left(
\norm{\bm\nabla v}^2
+
\norm{(\Lambda\sb{-}^2-m^2+\tau^2)^{1/2}v}^2
\right),
\\[1ex]
&
\forall
\tau \geq 1,
\qquad \forall v\in H^1\sb 0(\varOmega\sb R,\C^N),
\nonumber
\end{eqnarray}
where
\[
\Lambda\sb{+}=\abs{\lambda}+\abs{\omega},
\qquad
\Lambda\sb{-}=\abs{\lambda}-\abs{\omega}.
\]
Then
%% $J(D_m-\omega+V)$
%% has no embedded eigenvalues beyond the
%% embedded thresholds $\pm\jj(m+\abs{\omega})$:
\[
\pm\jj\lambda
\not\in
\sigma\sb{\mathrm{p}}
\left(J(D_m-\omega+V)\right).
%\cap
%\jj(\R\setminus[-m-\abs{\omega},m+\abs{\omega}])
%=\varnothing.
\]
\end{enumerate}
\end{theorem}

\begin{proof}
Let us prove
Theorem~\ref{theorem-absence}~\itref{theorem-absence-ii}.
%% the first part could be obtained
%% from the second one by setting $\omega=0$.
Let
$L=D_m-\omega+V$
and assume that $\lambda\in\jj \R$, $|\lambda|> m+\abs{\omega}$,
is an embedded eigenvalue of $JL$,
with $\zeta\in L\sp 2(\R\sp n,\C^N)$ the corresponding eigenvector.

We are going to use
Theorem~\ref{theorem-Carleman-l}~\itref{theorem-Carleman-l-ii},
where we take $\varphi(r)=\tau r$ with $\tau\ge 1$.
We note that
due to \eqref{ass-v-jl} and Lemma~\ref{lemma-v-small-coulomb},
Assumption \eqref{Eq:VSmallInfinity-l}
in Theorem~\ref{theorem-Carleman-l}~\itref{theorem-Carleman-l-ii}
is satisfied.
Let $R=R(1,1,m,n,\lambda,\omega,V)$ be as in
Theorem~\ref{theorem-Carleman-l}~\itref{theorem-Carleman-l-ii}
(note that it is independent of $\tau\ge 1$).
Let $\varTheta\in C^\infty(\R\sp n)$
be a smooth radially symmetric cut-off function
with support in
the closure of $\varOmega\sb{R+1}$
and with value $1$
in $\varOmega\sb{R+2}$.
By Theorem~\ref{theorem-Carleman-l},
\begin{eqnarray}\label{vg-1}
%&&
\norm{((\abs{\lambda}-\abs{\omega})^2-m^2+\tau^2)^{1/2}
e\sp{\tau r}\varTheta\zeta}
%\nonumber
%\\
%&&
%\qquad
\le
\norm{(n+16(\abs{\lambda}+\abs{\omega})^2 r^2+8\tau r)^{1/2}
e\sp{\tau r}(JL-\lambda)\varTheta \zeta}.
\end{eqnarray}
Since $JL\zeta=\lambda\zeta$, we have
$(JL-\lambda)\varTheta\zeta=[JL,\varTheta]\zeta
=J(D_0\Theta)\zeta$.
By \eqref{vg-1},
\[
\forall \tau\geq 1,
\qquad
\norm{e\sp{\tau r}\varTheta \zeta}
\le
\Norm{
\left(\frac{n}{\tau^2}
+16(\abs{\lambda}+\abs{\omega})^2\frac{r^2}{\tau^2}
+8\frac{r}{\tau}\right)\sp{1/2}
e\sp{\tau r}
(D_0\varTheta)\zeta
}.
\]
Taking into account that
$D_0\varTheta=-\jj\bm\alpha\cdot\bm\nabla\varTheta$
is identically zero outside of the ball
$\mathbb{B}^n\sb{R+2}$, we conclude that
\[
\forall \tau\geq 1,
\qquad
\norm{e\sp{\tau r}\zeta}\sb{L\sp 2(\varOmega\sb{R+2},\C^N)}
\le
C
e\sp{(R+2)\tau}
\norm{r\nabla\varTheta}\sb{L^\infty}
\|\zeta\|\sb{L\sp 2(\mathbb{B}^n\sb{R+2},\C^N)},
\]
with $C<\infty$
independent of $\tau\ge 1$.
Since $\tau$ could be
arbitrarily large,
we conclude that $\supp \zeta\cap\varOmega\sb{R+2}=\varnothing$.
The unique continuation principle, Lemma~\ref{lemma-unique-continuation},
ensures that $\zeta\equiv 0$,
contradicting our assumption that there were an embedded eigenvalue
$\lambda\in\jj \R$, $\abs{\lambda}>m+\abs{\omega}$.

Theorem~\ref{theorem-absence}~\itref{theorem-absence-i}
is proved similarly,
by using
Theorem~\ref{theorem-Carleman}~\itref{theorem-Carleman-ii}
instead of
Theorem~\ref{theorem-Carleman-l}~\itref{theorem-Carleman-l-ii}.
\end{proof}

A consequence
of Theorem~\ref{theorem-absence}~\itref{theorem-absence-i}
and Lemma~\ref{lemma-v-small-coulomb}
is the absence of solitary waves
with $\omega\in\R$, $\abs{\omega}>m$.
There are different possible formulations such as the following:
\begin{theorem}
\label{theorem-absence-2}
Let $n\ge 1$.
For $\omega\in\R\backslash[-m,m]$, there are no solutions
$\phi\sb\omega(x)$ to \eqref{nld-stationary}
such that
\[
\phi\sb\omega
\in
H^1(\R^n,\C^N)
\]
and such that
$F(x):=f(\phi\sb\omega\sp\ast(x)\beta\phi\sb\omega(x))$
satisfies
$F\in L^n\sb{\mathrm{loc}}(\R^n)$,
$F\ne 0$ almost everywhere in $x\in\R^n$,
and
\[
\abs{F(x)}
\le \kappa\frac{\sqrt{\omega^2-m^2}}{4\abs{\omega}\abs{x}}
\qquad
\mbox{for $x$ almost everywhere in}
\ \varOmega\sb R,
\]
with some
$R<\infty$ and $\kappa\in(0,1)$.
\end{theorem}

\subsection{Exponential decay of eigenstates}

Here we formulate our results
on the exponential decay of solitary wave solutions
to the nonlinear Dirac equation
and of eigenfunctions to linear Dirac equation
(which could be a linearization at a solitary wave;
cf. \eqref{nld-linear}).

\begin{theorem}[Exponential decay of solitary waves]
\label{theorem-properties}$\null$
Let $n\ge 1$
and assume that
$f$ is measurable and $\lim\sb 0 f=0$.
Let $\omega\in(-m,m)$
and let
\[
\phi\sb\omega
\in
L^2(\R^n,\C^N)\cap L^\infty(\R^n,\C^N)
\]
be a solution to \eqref{nld-stationary}
which satisfies
\begin{equation}\label{becomes-small}
\lim\sb{R\to\infty}\norm{\phi\sb\omega}\sb{L^\infty(\varOmega_R,\C^N)}=0,
\end{equation}
where
$\varOmega_R=\{x\in\R^n\sothat\abs{x}>R\}$.
Then for any $\mu<\sqrt{m\sp 2-\omega\sp 2}$
one has
$e\sp{\mu\langle r\rangle}\phi\sb\omega\in H\sp1(\R\sp n,\C^N)$.
\end{theorem}

Theorem~\ref{theorem-properties}
is proved in Section~\ref{sect-waves}.

\begin{theorem}[Exponential decay of eigenfunctions]
\label{theorem-embed}
Let $n\ge 1$
and assume that
$V\in L^\infty(\R^n,\End(\C^N))$.
\begin{enumerate}
\item
Assume that for any $\epsilon>0$ there is $R>0$ such that
\begin{equation}\label{v-small-0-1}
\norm{V v}\leq \epsilon \norm{v}_{H^1},
\qquad \forall v\in H\sp 1\sb{\mathrm{comp}}(\varOmega\sb R,\C^{N}).
\end{equation}
\label{theorem-embed-i}
Assume that
\[
\lambda\in\sigma\sb{\mathrm{p}}(D_m+V)\cap(-m,m).
\]
Then the corresponding eigenfunctions
are exponentially decaying:
if $\zeta$ is an eigenfunction corresponding to $\lambda$,
then for any
$\mu<\sqrt{m\sp 2-\omega\sp 2}$
one has
\[
e\sp{\mu\langle r\rangle}\zeta
\in H\sp1(\R\sp n,\C^{N}).
\]
\item
Let $J\in\End(\C^N)$ be skew-adjoint and invertible,
such that $J^2=-I\sb{\C^N}$, $[J,D\sb m]=0$.
Let $\omega\in[-m,m]$
and assume that
\[
\lambda\in\sigma\sb{\mathrm{p}}(JL(\omega))\cap \jj \R.
\]
\begin{enumerate}
\item
\label{theorem-embed-iia}
If $\abs{\lambda}<m-\abs{\omega}$
and for any $\epsilon>0$ there is $R>0$ such that
\begin{equation}\label{v-small-0-2}
\norm{V v}\leq \epsilon \norm{v}_{H^1},
\qquad \forall v\in H\sp 1\sb{\mathrm{comp}}(\varOmega\sb R,\C^{N}),
\end{equation}
then  the corresponding eigenfunctions
are exponentially decaying.
More precisely,
if $\zeta$ is an eigenfunction corresponding to $\lambda$,
then for any
\[
\mu<\sqrt{m^2-(|\lambda|+|\omega|)^2}
\]
one has
\[
e\sp{\mu\langle r\rangle}\zeta
\in H\sp1(\R\sp n,\C^{N}).
\]
\item
\label{theorem-embed-iib}
If $m-|\omega|<\abs{\lambda}<m+\abs{\omega}$
and for any $\epsilon>0$ there is $R>0$ such that
\begin{equation}\label{v-small-0-3}
\norm{\langle r\rangle V v}\leq \epsilon \norm{v}_{H^1},
\qquad \forall v\in H\sp 1\sb{\mathrm{comp}}(\varOmega\sb R,\C^{N}),
\end{equation}
then  the corresponding eigenfunctions
are exponentially decaying.
More precisely,
if $\zeta$ is an eigenfunction corresponding to $\lambda$,
then for any
\[
\mu<\sqrt{m^2-(|\lambda|-|\omega|)^2}
\]
one has
\[
e\sp{\mu\langle r\rangle}\zeta
\in H\sp1(\R\sp n,\C^{N}).
\]
\end{enumerate}
\end{enumerate}
\end{theorem}

\begin{remark}
In the above theorem, the potential $V$ is not necessarily self-adjoint.
\end{remark}

We prove
Theorem~\ref{theorem-embed}~\itref{theorem-embed-iib}
in Section~\ref{sect-embed};
the proofs of
Theorem~\ref{theorem-embed}~\itref{theorem-embed-i}
and Theorem~\ref{theorem-embed}~\itref{theorem-embed-iia}
are slightly shorter and readily follow
along the same lines.

\subsection{Bifurcations
of eigenvalues from the essential spectrum}

The next question we consider is
how the eigenvalues with nonzero real part
could arise.
As in Theorem~\ref{theorem-embed},
we formulate the results for general Dirac-type operators
of the form $JL(\omega)$, with
\[
L(\omega)=D_m-\omega+V(x,\omega),
\]
having in mind the linearization
\eqref{nld-linear}
of the nonlinear Dirac equation
at a solitary wave.

\begin{theorem}[Bifurcation of point eigenvalues]
\label{theorem-b}
Let $n\ge 1$.
Let $J\in\End(\C^N)$ be skew-adjoint and invertible,
such that $J^2=-I\sb{\C^N}$, $[J,D\sb m]=0$.
Let
$(\omega\sb j)\sb{j\in\N}$,
$\omega\sb j\in[-m,m]$,
be a sequence with
$\lim\sb{j\to\infty}\omega\sb j
=\omega\sb{0}\in[-m,m]$,
and assume that $V$ is hermitian
and that
there is $\varepsilon>0$ such that
\begin{equation}\label{lec-0}
\begin{cases}
\norm{\langle r\rangle\sp{1+\varepsilon}
V(\omega\sb{0})}\sb{L^\infty(\R\sp n,\End(\C^N))}<\infty,
\\[2ex]
\lim\sb{j\to\infty}
\,\norm{
\langle r\rangle\sp{1+\varepsilon}\left(V(\omega\sb j)-V(\omega\sb{0})\right)
}\sb{L^\infty(\R\sp n,\End(\C^N))}=0,
\end{cases}
\end{equation}
where $\norm{\langle r\rangle^{1+\varepsilon}V(\omega)}
=\norm{\langle\cdot\rangle^{1+\varepsilon}V(\cdot,\omega)}$.
Let
$\lambda\sb j\in\sigma\sb{\mathrm{p}}(JL(\omega\sb j))$,
$j\in\N$
be a sequence
such that
\[
\Re\lambda\sb j\ne 0\quad\forall j\in\N,
\qquad
\lambda\sb j
\mathop{\longrightarrow}\limits\sb{j\to\infty}
\lambda\sb{0}\in\jj\R,
\qquad
\lambda\sb{0}\ne\pm\jj(m+\abs{\omega\sb{0}}).
\]
If $\omega\sb{0}= \pm m$,
additionally assume that
\begin{equation}\label{ends-do-not-meet-0}
\lambda\sb{0}\ne 0.
\end{equation}
%% \begin{enumerate}
%% \item
%% \label{theorem-b-3}
%% For
%% $\lambda\sb{0}\in\jj\R$
%% with
%% $\abs{\lambda\sb{0}}<m+\abs{\omega\sb{0}}$,
%% we have
Then
\[
\lambda\sb{0}\in\sigma\sb{\mathrm{p}}(JL(\omega\sb{0})).
\]
%% and there is a subsequence of eigenfunctions $(\zeta\sb j)\sb{j\in\N}$
%% corresponding to eigenvalues
%% $\lambda\sb j\in\sigma\sb{\mathrm{p}}(JL(\omega\sb j))$
%% which converges in $L\sp 2$
%% to an eigenfunction $\zeta\sb{0}$
%% corresponding to
%% $\lambda\sb{0}\in\sigma\sb{\mathrm{p}}(JL(\omega\sb{0}))$.
%% \item
%% \label{theorem-b-2}
%% For
%% $\abs{\lambda\sb{0}}>m+\abs{\omega\sb{0}}$, we have
%% \[
%% \lambda\sb{0}\in\sigma\sb{\mathrm{p}}(JL(\omega\sb{0})).
%% \]
%% \end{enumerate}
\end{theorem}
This theorem will be proved in
Section~\ref{sect-b}.

\begin{remark}
The conclusion of
Theorem~\ref{theorem-b}
is trivial
when $V$ depends continuously on $\omega$
and if $\lambda\sb{0}\in\jj \R$
with $\abs{\lambda\sb{0}}<m-\abs{\omega\sb{0}}$,
so that $\lambda\sb{0}$ is not in the essential spectrum,
and the inclusion
$\lambda\sb{0}\in\sigma\sb{\mathrm{p}}(JL(\omega\sb{0}))$
follows from the continuous dependence
of isolated eigenvalues on a parameter $\omega$.
\end{remark}

\begin{remark}\label{remark-ts}
If $JL$ corresponds to the linearization
at solitary waves,
then, due to the exponential decay of $\phi\sb\omega$
(cf. Theorem~\ref{theorem-properties}),
the condition \eqref{lec-0}
is trivially satisfied
for any $\omega\sb{0}\in(-m,m)$
(and with any  $\varepsilon>0$).
\end{remark}

%% \ac{Moved the following remark
%% from Section 5.3
%% to this place;
%% it is an important statement
%% and we do not need to hide it that far.}

\begin{remark}\label{remark-nop}
Combining the results of Theorem~\ref{theorem-b}
with Theorem~\ref{theorem-absence}
on the absence of embedded eigenvalues,
we conclude that
for the linearizations at solitary waves
the bifurcations of point eigenvalues
from the continuous spectrum beyond the embedded thresholds
$\pm\jj(m+\abs{\omega})$ are not possible.
\end{remark}

\bigskip

We separately consider the case when $\omega\sb j\to\omega\sb{0}=m$
which corresponds to the nonrelativistic limit.
This case will be further investigated
in a subsequent work.

\begin{theorem}[Bifurcations
%% of eigenvalues
from the spectrum of the free Dirac operator]
\label{theorem-vb}
\quad

Let $n\ge 1$.
Let $J\in\End(\C^N)$ be skew-adjoint and invertible,
such that $J^2=-I\sb{\C^N}$, $[J,D\sb m]=0$.
Let $V(\omega)\in L^\infty(\R^n,\End(\C^N))$ for $\omega\in[-m,m]$,
and let
$(\omega\sb j)\sb{j\in\N}$,
$\omega\sb j\in[-m,m]$,
$\omega\sb j\to\omega\sb{0}=\pm m$,
and assume that there is $\varepsilon>0$ such that
\[
\lim\sb{j\to\infty}
\norm{\langle r\rangle\sp{1+\varepsilon}V(\omega\sb j)}
\sb{L^\infty(\R\sp n,\End(\C^N))}=0.
\]
Let
$\lambda\sb j\in\sigma\sb{\mathrm{p}}(JL(\omega\sb j))$,
and let $\lambda\sb{0}\in\jj\R\cup\{\infty\}$
be an accumulation point of the sequence $(\lambda\sb j)\sb{j\in\N}$.
Then
$\lambda\sb{0}\in\{0;\pm 2 m \jj \}$.
In particular,
$\lambda\sb{0}\ne\infty$.
\end{theorem}

\begin{remark}
In this theorem, we do not need to assume that
$\Re\lambda\sb j\ne 0$, $\forall j\in\N$
and that $V$ is hermitian.
\end{remark}

This theorem will be proved in
Section~\ref{sect-vb}.

\section{Carleman--Berthier--Georgescu estimates}
\label{sect-carleman}

The main ingredient of our proofs
is the version of the Carleman estimates
for the Dirac operator
due to
Berthier and Georgescu~\cite[Theorem 5]{MR880983}.
The following result
generalizes the Carleman estimates
for the Dirac operator in $\R^3$
to any dimension.

For $\lambda\in\R\setminus[-m,m]$
and
$\varphi\in\mathscr{C}\sb\lambda(M,\calN,\rho,\nu)$
with some $M\geq 1,\,\calN\geq 1,\,\rho\ge 1$, and $\nu>0$,
we denote
\begin{equation}\label{def-mu}
\mu(r)
=
2
\Big(
n+16\lambda^2 r^2+8r\varphi'(r)
\Big)^{1/2},
\qquad
r\ge\rho;
\end{equation}
\begin{equation}\label{def-g}
\gamma(r)=
\left(
\lambda^2-m^2+\varphi'(r)^2+2r\varphi'(r)\varphi''(r)\right)^{1/2},
\qquad
r\ge\rho.
\end{equation}

\begin{theorem}[Carleman--Berthier--Georgescu estimates]
\label{theorem-Carleman}
Let $n\ge 1$,
$m>0$,
$\lambda\in\R\setminus[-m,m]$.

\begin{enumerate}
\item
\label{theorem-Carleman-i}
Assume that
$
\varphi\in\mathscr{C}\sb\lambda(M,\calN,\rho,\nu)
$
with some $M\geq 1,\,\calN\geq 1,\,\rho\ge 1$, and $\nu>0$.

Let
$V\in\mathscr{B}\left(H\sp 1\sb{\mathrm{comp}}(\varOmega\sb\rho,\C^N),\,L\sp 2(\varOmega\sb\rho,\C^N)\right)$ be a multiplication operator,
and assume that there are $\kappa\in[0,1)$
and $R_1=R_1(\varphi,V)\ge\rho$
such that
\begin{equation}\label{ineq-0}
\norm{\mu V v}
\le
\kappa
\left(
\norm{\bm\nabla v}^2
+
\norm{\gamma v}^2
\right)^{1/2},
\qquad \forall v\in H^1\sb 0(\varOmega_{R_1},\C^N).
\end{equation}
Then there is
$R=R(\varphi,V)\ge R_1$
such that
for any $u\in H\sp 1(\R\sp n,\C^N)$
with $\supp u\subset\varOmega\sb{R}$
and
\[
\mu e\sp\varphi (D\sb m+V-\lambda)u \in L\sp 2(\R\sp n,\C^N),
\]
the functions
$\gamma e\sp{\varphi}u$,
\ $\bm\nabla(e\sp{\varphi}u)$,
\ $(r\varphi')\sp{1/2}\partial\sb r(e\sp{\varphi}u)$
are in $L\sp 2(\R\sp n,\C^N)$, and moreover
\begin{eqnarray}
\label{car}
%&&
\norm{\bm\nabla(e^\varphi u)}^2
+
2\norm{(r\varphi')^{1/2}\p\sb r(e^\varphi u)}^2
+
\norm{
\gamma
e\sp{\varphi}u}\sp 2
%% \nonumber
%% \\
%% &&
%% \qquad
\le
\frac{1}{(1-\kappa)^2}
\norm{
\mu
e\sp\varphi (D\sb m+V-\lambda)u}\sp 2.
\end{eqnarray}
\item
\label{theorem-Carleman-ii}
Assume that there are $\kappa\in[0,1)$
and
$R_2=R_2(V)$
such that
\begin{eqnarray}\label{Eq:VSmallInfinity}
&
\norm{(n+16\lambda^2 r^2+8r f(r))^{1/2} V v}^2
%\nonumber
%\\
%&
%\qquad
\le
\kappa^2
\left(
\norm{\bm\nabla v}^2
+
\norm{(\lambda^2-m^2+f(r)^2
)^{1/2}v}^2
\right),
\\[1ex]
&
\forall
f\in C(\R),
\quad
f\ge 0,
\quad
\sup\sb{r>0}\frac{f(r)}{\langle r\rangle}<\infty;
\qquad \forall v\in H^1\sb 0(\varOmega\sb{R_2},\C^N).
\nonumber
\end{eqnarray}
Let $M\geq 1,\,\calN\geq 1,\,\rho\ge 1$, and $\nu>0$.
Then there is
\[
R=R(M,\rho,m,n,\lambda,V)\ge\max(\rho,R_2(V))
\]
such that for any
$\varphi\in\mathscr{C}\sb\lambda(M,\calN,\rho,\nu)$
with
$\varphi''(r)\ge 0$, $r\ge\rho$,
and for any
\[
u\in H\sp 1\sb 0(\varOmega\sb{R},\C^N)
\]
which satisfies
\[
\mu e\sp\varphi (D\sb m+V-\lambda)u \in L\sp 2(\R\sp n,\C^N),
\]
the functions
$\gamma e\sp{\varphi}u$,
\ $\bm\nabla(e\sp{\varphi}u)$,
\ $(r\varphi')\sp{1/2}\partial\sb r(e\sp{\varphi}u)$
are in $L\sp 2(\R\sp n,\C^N)$, and moreover
\begin{eqnarray}\label{carV}
%&&
\norm{\bm\nabla(e^\varphi u)}^2
+
2\norm{(r\varphi')^{1/2}\p\sb r(e^\varphi u)}^2
+
\norm{
\gamma e\sp{\varphi}u}\sp 2
%\nonumber
%\\
%&&
%\qquad
\le
\frac{1}{(1-\kappa)^2}
\norm{
\mu
e\sp\varphi (D\sb m+V-\lambda)u}\sp 2.
\end{eqnarray}
\end{enumerate}
Above,
$\varphi$, $\mu$, and $\gamma$
(see \eqref{def-mu}
and \eqref{def-g})
are considered as functions of $r=\abs{x}$.
\end{theorem}

\begin{proof}
We choose to give a detailed proof
which closely follows the argument in~\cite{MR880983}.
We start with several lemmata.
Let
$D\sb m=-\jj \bm\alpha\cdot\bm\nabla+\beta m$,
$\varphi\in C\sp 2(\R\sb{+})$,
and denote
\begin{equation}\label{def-dm-varphi}
D\sb m\sp\varphi
=e\sp\varphi\circ D\sb m \circ e\sp{-\varphi}
=D\sb m+ \jj \bm\alpha\cdot\bm\nabla\varphi.
\end{equation}
The starting point of the analysis
is the following lemma
which helps to establish the exponential decay of
eigenvectors associated to eigenvalues in the gap.

\begin{lemma}[Lemma 3,~\cite{MR880983}]
\label{Lem:LemmaBG3}
Let $\Omega\subset \R\sp n$ be an open set and  $\varphi: \Omega\to
\R$ a $C^1$ map.
For $v\in H\sp 1\sb{\mathrm{comp}}(\Omega,\C^N)$,
\begin{equation}\label{g14}
\Re
\langle
(D\sb m-\jj \bm\alpha\cdot\bm\nabla\varphi+\lambda)v,
(D\sb m\sp\varphi-\lambda)v\rangle
=\norm{\bm\nabla v}\sp 2+\langle v,[m\sp 2-\lambda\sp 2-(\bm\nabla\varphi)\sp 2]v\rangle.
\end{equation}
\end{lemma}

\begin{proof}
Taking into account that $\bm\nabla\varphi$ is continuous,
due to density of smooth functions with compact support
in $H\sp 1\sb{\mathrm{comp}}(\Omega,\C^N)$,
it is enough to give the proof assuming that $v\in H\sb{\mathrm{comp}}^\infty(\Omega,\C^N)$.

The statement of the lemma
is a consequence
of the following computation performed in~\cite{MR880983}:
\begin{eqnarray}\label{ltt}
&(D\sb m\sp\varphi+\lambda)
(D\sb m\sp\varphi-\lambda)
=(D\sb m\sp\varphi)\sp 2-\lambda\sp 2
%\nonumber
%\\
%&
%\qquad
=e\sp\varphi\circ (D\sb m\sp 2-\lambda\sp 2)\circ e\sp{-\varphi}
=e\sp\varphi\circ(-\Delta+m\sp 2-\lambda\sp 2)\circ e\sp{-\varphi}
\nonumber
\\[1ex]
&
%\qquad
=-\Delta+m\sp 2-\lambda\sp 2
-(\bm\nabla\varphi)\sp 2
+\bm\nabla\varphi\cdot\bm\nabla
+\bm\nabla\cdot\bm\nabla\varphi,
\end{eqnarray}
where the last term is understood as
the multiplication by $\bm\nabla\varphi$
and then taking the divergence.
In the real part of the corresponding quadratic form
the last two terms from the right-hand side of \eqref{ltt}
cancel,
while the real part of the left-hand side
turns into that of \eqref{g14}.
\end{proof}

For brevity, we adopt the following notations from~\cite{MR880983}:
\[
\hat{X}=x\cdot\bm\nabla,
\qquad
\scrD=\frac 1 2\{x,-\jj \bm\nabla\}
=-\jj \hat{X}-\frac{\jj n}{2}.
\]
Notice that $\scrD$ is the generator of dilations and thus
in the sense of quadratic forms
on $H\sp1\sb{\mathrm{comp}}(\Omega,\C^N)$,
\begin{equation}\label{dilations-generator}
[\scrD,D\sb m]=
[\scrD,D\sb 0]=
[-\jj x\cdot\bm\nabla,-\jj \bm\alpha\cdot\bm\nabla]=
[\bm\alpha\cdot\bm\nabla,x\cdot\bm\nabla]
=\bm\alpha\cdot\bm\nabla=\jj D\sb 0.
\end{equation}

In order to analyze the eigenvectors associated to embedded eigenvalues, it
will be convenient to subtract the identity
\eqref{ltt}
from another one (which also
involves $(D\sb m\sp\varphi-\lambda)$ in the left hand side) that controls the
$\dot{H}\sp 1$ norm.
Starting with $D\sb 0(D\sb m\sp\varphi-\lambda)$ and trying
to eliminate inconvenient terms (with a factor $\lambda$ for instance), Berthier and Georgescu~\cite{MR880983}
have the following lemma (Lemma 4,~\cite{MR880983}),
which we rewrite for arbitrary dimension $n\ge 1$.

\begin{lemma}
\label{lemma-lemma4}
Let
$\varOmega\subset\R^n$ be an open set
and let
$\varphi\in C\sp 2(\varOmega)$.
Then for any $u\in H\sp 1\sb{\mathrm{comp}}(\varOmega,\C^N)$,
\begin{eqnarray}\label{g16}
&&
2\Re\langle
(D\sb 0+2 \jj \lambda\scrD
+\{\scrD,\bm\alpha\cdot\bm\nabla\varphi\})v,
(D\sb m\sp\varphi-\lambda)v\rangle
\\[1ex]
&&
=
2\norm{\bm\nabla v}\sp 2
+4\Re
\langle \hat{X} v,(\bm\nabla\varphi)\cdot\bm\nabla v\rangle
+2\Re\langle \hat{X} v,\varDelta\varphi\,v\rangle
+\langle v,\big(\hat{X}(\bm\nabla\varphi)\sp 2\big)v\rangle.
\nonumber
\end{eqnarray}
\end{lemma}

\begin{proof}
We present the proof from~\cite{MR880983}
stripped off the external fields.
Again, since $\bm\nabla \varphi$ and $\Delta \varphi$ are continuous,
and since smooth functions with compact support
are dense in $H\sp 1\sb{\mathrm{comp}}(\Omega,\C^N)$,
the computations below, made in the former case,
will provide the proof for the latter.
First, using \eqref{dilations-generator},
we get
\begin{eqnarray}\label{g20}
4\Im\langle\scrD v,(D\sb m\sp\varphi-\lambda)v\rangle
&=&
\frac{2}{ \jj }
\langle v,[\scrD,D\sb 0]v\rangle
+4\Im\langle\scrD v,(\beta m+ \jj \bm\alpha\cdot\bm{F}-\lambda)v\rangle
\nonumber
\\
\qquad
&=&
2\langle v,D\sb 0 v\rangle
+4\Re\langle \scrD v,\bm\alpha\cdot\bm{F} v\rangle,
\end{eqnarray}
where $\bm{F}=\bm\nabla\varphi$.
Using \eqref{def-dm-varphi}
and the identity
\begin{equation}\label{ts}
(\bm\alpha\cdot\bm{S})(\bm\alpha\cdot\bm{T})
=\bm{S}\cdot\bm{T}
+ \jj \Sigma(\bm{S},\bm{T}),
\qquad
\bm{S},\ \bm{T}\in\C\sp n,
\end{equation}
where
$\Sigma(\bm{S},\bm{T})=\Sigma\sb{j k}S\sb j T\sb k$,
with the matrices
$\Sigma\sb{j k}=\frac{1}{2 \jj }
[\alpha\sp j,\alpha\sp k]$
hermitian for each $j,\,k$,
we have
\[
D\sb 0(D\sb m\sp\varphi-\lambda)
=-\varDelta
+D\sb 0 \beta m+
(\bm\alpha\cdot\bm\nabla)
\circ
(\bm\alpha\cdot\bm{F})-\lambda D\sb 0
\]
\[
\qquad
=-\varDelta+m D\sb 0\beta
+ \bm{F}\cdot \nabla
+\varDelta\varphi
+ \jj \Sigma(\bm\nabla,\bm{F})-\lambda D\sb 0.
\]
But
$\Re\langle v,D\sb 0\beta v\rangle=0$
(due to $\{D\sb 0,\beta\}=0$),
$2\Re\langle v,\bm{F}\cdot\bm\nabla v\rangle
=-\langle v,\varDelta\varphi\, v\rangle
$,
and
\[
\Sigma(\bm\nabla,\bm{F})
=
\Sigma\sb{j k}
\p\sb j\circ \varphi\sb k
=
\Sigma\sb{j k}
\varphi\sb{j k}
+
\Sigma\sb{j k}
\varphi\sb k\p\sb j
=
-\Sigma\sb{k j}
\varphi\sb k\p\sb j
=-\Sigma(\bm{F},\bm\nabla),
\]
hence
\begin{align}\label{g21}
2\Re
\langle D\sb 0 v,(D\sb m\sp\varphi-\lambda)v\rangle
=2\norm{\bm\nabla v}\sp 2+
\langle v,\varDelta\varphi\,v\rangle
-2 \jj \langle v,\Sigma(\bm{F},\bm\nabla)v\rangle
-2\lambda\langle v,D\sb 0 v\rangle.
\end{align}
The last term is inconvenient as, due to the factor $\lambda$, it cannot be controlled properly and uniformly in $\lambda$.
Adding \eqref{g20}
(multiplied by $\lambda$)
to \eqref{g21}, to get rid of
$2\lambda\langle v,D\sb 0 v\rangle$,
we obtain
\begin{eqnarray}\label{g22}
&&
2\Re
\langle
D\sb 0 v,(D\sb m\sp\varphi-\lambda)v\rangle
+4\lambda
\Im\langle\scrD v,(D\sb m\sp\varphi-\lambda)v\rangle
\nonumber\\[1ex]
&&
=
2\norm{\bm\nabla v}\sp 2
+
\langle v,
\big(
\varDelta\varphi
-2 \jj
\Sigma(\bm{F},\bm\nabla)
\big)
v\rangle
+
4\lambda\Re\langle\scrD v,\bm\alpha\cdot\bm{F} v\rangle;
\end{eqnarray}
see \eqref{dilations-generator}.
Now we eliminate the inconvenient term $4\lambda\Re\langle\scrD
v,\bm\alpha\cdot\bm{F} v\rangle$. Recalling that
$[\scrD,D\sb 0]=\jj D\sb 0$, we derive the identity
\begin{align*}
\{\scrD,\bm\alpha\cdot\bm{F}\}D\sb 0
&=
\{\scrD,\bm\alpha\cdot\bm{F} D\sb 0\}
+\bm\alpha\cdot\bm{F}[\scrD,D\sb 0]
\\
&
=\{\scrD,\bm\alpha\cdot\bm{F} D\sb 0\}
+ \jj \bm\alpha\cdot\bm{F} D\sb 0
=\{\scrD+\frac{ \jj }{2},\bm\alpha\cdot\bm{F} D\sb 0\}
\\
&
=\{\scrD+\frac{ \jj }{2},(\bm\alpha\cdot\bm{F})(-\jj \bm\alpha\cdot\bm\nabla)\}
=-\jj \{\scrD+\frac{ \jj }{2},\bm{F}\cdot\bm\nabla\}
+\{\scrD+\frac{ \jj }{2},\Sigma(\bm{F},\bm\nabla)\},
\end{align*}
where in the last line we used \eqref{ts}.
The above relation leads to
\begin{eqnarray}\label{g23}
2\Re
\{\scrD,\bm\alpha\cdot\bm{F}\}D\sb 0
=
\Im\{2\scrD+ \jj ,\bm{F}\cdot\bm\nabla\}
+\Re\{2\scrD+ \jj ,\Sigma(\bm{F},\bm\nabla)\}
%\nonumber
%\\
=
\Im\{2\scrD+ \jj ,\bm{F}\cdot\bm\nabla\}
+2 \jj \Sigma(\bm{F},\bm\nabla),
\end{eqnarray}
where we used the identity
$\Re\{\scrD,\Sigma(\bm{F},\bm\nabla)\}=0$.
The first term in the right-hand side of \eqref{g23}
can be written as
\begin{align}\label{g24}
\Im \{2\scrD+ \jj ,\bm{F}\cdot\bm\nabla\}
%\nonumber
%\\
&
=
-\jj
\big(
\scrD\bm{F}\cdot\bm\nabla
+\bm{F}\circ\bm\nabla\scrD
+
\bm\nabla
\circ
\bm{F}\scrD
+\scrD\bm{F}\cdot\bm\nabla
+ \jj \bm{F}\cdot\bm\nabla
-\jj \bm\nabla\circ\bm{F}
\big)
\nonumber
\\[1ex]
&
=-\jj \{\scrD,\{\bm{F},\bm\nabla\}\}-\varDelta\varphi
=2\Im(\scrD\{\bm{F},\bm\nabla\})-\varDelta\varphi
%\nonumber
%\\
%&
=-2\Re (\bm\nabla\circ x\{\bm{F},\bm\nabla\})-\varDelta\varphi
\nonumber
\\[1ex]
&
=-4\Re(\bm\nabla\circ x\,\bm{F}\cdot\bm\nabla)
-2\Re(\bm\nabla\circ x\varDelta\varphi)-\varDelta\varphi.
\end{align}
    From \eqref{g23} and \eqref{g24} we obtain
\[
2\Re\{\scrD,\bm\alpha\nabla\varphi\}D\sb 0
=
-4\Re(\bm\nabla\circ x\,\bm{F}\cdot\bm\nabla)
-2\Re(\bm\nabla\circ x\varDelta\varphi)-\varDelta\varphi
+2 \jj \Sigma(\bm{F},\bm\nabla),
\]
\begin{align}
\label{g25}
&
2\Re\langle
\{\scrD,\bm\alpha\cdot\bm{F}\}v,
(D\sb m\sp\varphi-\lambda)v\rangle
\nonumber
\\[1ex]
&=2\Re\langle
\{\scrD,\bm\alpha\cdot\bm{F}\}v,D\sb 0 v\rangle
+2\Re\langle
\{\scrD,\bm\alpha\cdot\bm{F}\}v,
\jj \bm\alpha\cdot\bm{F} v\rangle
%\nonumber
%\\
%&\quad
-2\lambda\Re\langle
\{\scrD,\bm\alpha\cdot\bm{F}\}v,v\rangle
\\[1ex]
&=4\Re\langle \hat{X} v,\bm{F}\cdot\bm\nabla v\rangle
+2\Re\langle \hat{X} v,\varDelta\varphi\,v\rangle
-\langle v,(\varDelta\varphi
-2 \jj \Sigma(\bm{F},\bm\nabla))v\rangle
%\nonumber
%\\
%&\quad
+\langle v,\hat{X}(\bm{F})\sp 2v\rangle
-4\lambda\Re\langle\scrD v,\bm\alpha\cdot\bm{F} v\rangle.
\nonumber
\end{align}
Above, we used the identity
\[
2\Re
\langle
\{\scrD,\bm\alpha\cdot\bm{F}\}v, \jj \bm\alpha\cdot\bm{F} v\rangle
=
\langle
v,
\big(
\{\scrD,\bm\alpha\cdot\bm{F}\}
\jj \bm\alpha\cdot\bm{F}
-
\jj \bm\alpha\cdot\bm{F}
\{\scrD,\bm\alpha\cdot\bm{F}\}
\big)
v\rangle
%\]
%\[
=
\langle v,\hat X (\bm{F})\sp 2 v\rangle.
\]
Adding
\eqref{g22}
and \eqref{g25}
yields \eqref{g16}.
\end{proof}

The following lemma
parallels~\cite[Lemma 6]{MR880983} with explicit constants.

\begin{lemma}\label{lemma-bg-inequality}
Let $n\in\N$, $\rho>0$.
Let $\varphi\in C\sp 2([\rho,+\infty))$
with $\varphi'>0$,
and let us define
$Z\in C([\rho,+\infty))$
by
\begin{eqnarray}\label{def-Z}
Z(r)
&=&
2\left(
\lambda\sp 2-m\sp 2+\varphi'\sp 2+2 r\varphi'\varphi''
-(n-1)\varphi''
-\frac{(n-1)\sp 2\varphi'}{r}-\frac{r\varphi''\sp 2}{\varphi'}
\right)
\nonumber
\\
&&
\quad
-4\left(\frac{(2n-1)(\abs{\lambda}+\varphi')+m+2r\abs{\varphi''}}
{\mu}\right)^2.
\end{eqnarray}
Then for any $v\in H\sp 1\sb{\mathrm{comp}}(\R\sp n,\C^N)$
one has
\begin{equation}
\label{euw}
\norm{\bm\nabla v}\sp 2
+
2\norm{(r\varphi')\sp{1/2} \partial\sb r v}\sp 2
+
\big\langle v, Z v\big\rangle
\le
\norm{\mu(D\sb m\sp\varphi-\lambda)v}\sp 2,
\end{equation}
and for any
$u\in H\sp 1\sb{\mathrm{comp}}(\R\sp n,\C^N)$
one has
\begin{equation}
\label{euw-u}
\norm{\bm\nabla(e^\varphi u)}^2
+
2\norm{(r\varphi')\sp{1/2} \partial\sb r(e^\varphi u)}\sp 2
+
\big\langle e^\varphi u, Z e^\varphi u\big\rangle
\le\norm{\mu e^\varphi(D\sb m-\lambda)u}\sp 2.
\end{equation}
Above,
$\varphi$, $\mu$, and $Z$
(see \eqref{def-mu}, \eqref{def-Z})
are considered as functions of $r=\abs{x}$.
\end{lemma}

\begin{proof}
Denote
$\hat\alpha=r\sp{-1}\bm\alpha\cdot x$,
where $r=\abs{x}$.
We subtract \eqref{g14} from \eqref{g16}
with the aid of the identity
\[
\{\scrD,\bm\alpha\cdot\bm{F}\}
=
\{-\jj \hat{X}-\frac{ \jj n}{2},\hat\alpha\varphi'\}
=-2 \jj \hat\alpha\varphi'\hat{X}
-\jj \hat\alpha r\varphi''
-\jj n\hat\alpha\varphi',
\]
arriving at
\begin{align}\label{g41}
&
2\Re\Big\langle
\Big[
D\sb 0+2 \jj \lambda\scrD
+\{\scrD,\bm\alpha\cdot\bm{F}\}
-\frac 1 2(D\sb m-\jj \bm\alpha\cdot\bm{F}+\lambda)
\Big]
v,\,\,(D\sb m\sp\varphi-\lambda)v\Big\rangle
\nonumber\\
&
=2\Re\Big\langle\Big[
-\frac{ \jj }{2}\bm\alpha\cdot\bm\nabla
+2(\lambda-\jj \hat\alpha\varphi')\hat{X}
+(n-\frac{1}{2})\lambda
-\frac{m}{2}\beta
%\nonumber\\
%&
%\qquad\qquad\qquad\qquad\qquad
-\jj (n-\frac 1 2)\hat\alpha\varphi'
-\jj \hat\alpha r\varphi''
\Big]v,\,\,(D\sb m\sp\varphi-\lambda)v\Big\rangle
\nonumber\\
&
=
\norm{\bm\nabla v}\sp 2
+4\Re\langle
\hat{X} v,\frac{\varphi'}{r}\hat{X} v\rangle
+2\Re\langle \hat{X} v,\varDelta\varphi\,v\rangle
%\nonumber\\
%&
%\qquad\qquad\qquad
+\Big\langle v,\Big[
\lambda\sp 2-m\sp 2+\varphi'\sp 2+2 r\varphi'\varphi''
\Big]v\Big\rangle.
\end{align}
Since
\[
2\Re\Big\langle \hat{X} v,\frac{\varphi'}{r}v\Big\rangle
=
\Big\langle \hat{X} v,\frac{\varphi'}{r}v\Big\rangle
+
\Big\langle v,\frac{\varphi'}{r}\hat{X} v\Big\rangle
%\]
%\[
=
\Big\langle v,
\Big(-\hat X\circ\frac{\varphi'}{r}
-n\frac{\varphi'}{r}+\frac{\varphi'}{r}\hat{X}\Big)
v\Big\rangle
=
-\Big\langle v,\Big[\varphi''+(n-1)\frac{\varphi'}{r}\Big]v\Big\rangle,
\]
which is valid for
$v\in H\sp 1\sb{\mathrm{comp}}(\varOmega\sb\rho,\C^N)$
with $\rho>0$,
we have
\[
2\Re\langle \hat X v,\varDelta\varphi\,v\rangle
=
2\Re\langle \hat X v,\varphi'' v\rangle
-\Big\langle v,\Big[(n-1)\varphi''+(n-1)\sp
2\frac{\varphi'}{r}\Big]v\Big\rangle.
\]
We use the above relation to rewrite \eqref{g41} as
\begin{align}
&
\Re\Big\langle\Big[
-\jj \bm\alpha\cdot\bm\nabla+4(\lambda-\jj \hat\alpha\varphi')\hat{X}
+(2n-1)(\lambda-\jj \hat\alpha\varphi')
%\nonumber\\
%&
%\qquad\qquad\qquad\qquad\qquad\qquad
-\beta m
-2 \jj \hat\alpha r\varphi''
\Big]v,(D\sb m\sp\varphi-\lambda)v\Big\rangle
\nonumber\\[1ex]
&
=
\norm{\bm\nabla v}\sp 2
+4
\norm{(\frac{\varphi'}{r})\sp{\frac 1 2}\hat{X} v}\sp 2
+2\Re\langle \hat{X} v,\varphi''v\rangle
%\nonumber\\
%&
%\qquad\qquad\qquad
+\Big\langle v,\Big[
\lambda\sp 2-m\sp 2+\varphi'\sp 2+2 r\varphi'\varphi''-(n-1)\varphi''
-(n-1)\sp 2\frac{\varphi'}{r}
\Big]v\Big\rangle.
\nonumber
\end{align}
For any positive continuous function $\mu(r)$,
the above relation yields the following
inequality:
\begin{align}
\label{g41rr}
&
\frac 1 2
\Norm{
\frac{1}{\mu}
\big[
-\jj \bm\alpha\cdot\bm\nabla+4(\lambda-\jj \hat\alpha\varphi')\hat{X}
+(2n-1)
(\lambda-\jj \hat\alpha\varphi')
-\beta m
-2 \jj \hat\alpha r\varphi''
\big]
v
}\sp 2
%\nonumber
%\\
%&
%\qquad\qquad\qquad\qquad
+
\frac{\norm{\mu(D\sb m\sp\varphi-\lambda)v}\sp 2}{2}
\\
&
\ge
\norm{\bm\nabla v}\sp 2
+3
\norm{\big(\frac{\varphi'}{r}\big)\sp{\frac 1 2}\hat{X} v}\sp 2
-\norm{\big(\frac{r}{\varphi'}\big)\sp{\frac 1 2}\varphi''v}\sp 2
%\nonumber
%\\
%&
%\qquad\qquad\qquad
+\Big\langle v,\Big[
\lambda\sp 2-m\sp 2+\varphi'\sp 2+2 r\varphi'\varphi''-(n-1)\varphi''
-(n-1)\sp 2\frac{\varphi'}{r}
\Big]v\Big\rangle.
\nonumber
\end{align}
Since
$\frac 1 2(a+b+c+d)^2
\le 2(a^2 +b^2+c^2+d^2)$,
\eqref{g41rr} leads to
\begin{align}
\label{g41rrr}
&
2\Big(
\Norm{
\frac{\bm\alpha\!\cdot\!\bm\nabla u}{\mu}
}^2
+
\Norm{\frac{4\lambda\hat{X}u}{\mu}}^2
+
\Norm{\frac{4\hat\alpha\varphi'\hat{X}u}{\mu}}^2
%\nonumber
%\\
%&
%\qquad\qquad\qquad
+\Norm{
\frac{
[(2n-1)
(\abs{\lambda}+\varphi')
+m
+2r\abs{\varphi''}
]v}{\mu}
}\sp 2
\Big)
+
\frac{\norm{\mu(D\sb m\sp\varphi-\lambda)v}\sp 2}{2}
\nonumber
\\[1ex]
&
\ge
\norm{\bm\nabla v}\sp 2
+3
\norm{\big(\frac{\varphi'}{r}\big)\sp{\frac 1 2}\hat{X} v}\sp 2
-\norm{\big(\frac{r}{\varphi'}\big)\sp{\frac 1 2}\varphi''v}\sp 2
%\nonumber
%\\
%&
%\qquad\qquad\qquad
+\Big\langle v,\Big[
\lambda\sp 2-m\sp 2+\varphi'\sp 2+2 r\varphi'\varphi''-(n-1)\varphi''
-(n-1)\sp 2\frac{\varphi'}{r}
\Big]v\Big\rangle.
\end{align}
To eliminate the first three terms
from the left-hand side of \eqref{g41rrr}
(with the help of the first two terms from the right-hand side),
we require that $\mu(r)$ be such that
\begin{equation}\label{enough1}
2
\left(
\Norm{\frac{\bm\alpha\cdot\bm\nabla v}{\mu}
}\sp 2
+
\Norm{
\frac{4\lambda\hat{X}v}{\mu}
}\sp 2
\right)
\le
\frac 1 2\norm{\bm\nabla v}\sp 2,
\end{equation}
\begin{equation}\label{enough2}
\Norm{
\frac{4\hat\alpha\varphi'\hat{X}v}{\mu}
}\sp 2
\le
\Norm{(\frac{\varphi'}{r})\sp{\frac 1 2}\hat{X} v}\sp 2,
\end{equation}
where
\[
\norm{\bm\nabla v}
:=
\Big(\sum\sb{\jmath=1}\sp{n}\norm{\p\sb\jmath v}\sp 2\Big)\sp{\frac 1 2}.
\]
Since
\[
\Norm{\frac{\bm\alpha\cdot\bm\nabla v}{\mu}}\sp 2
\le
\Big(
\sum\sb{\jmath=1}\sp{n}
\Norm{\frac{\p\sb\jmath v}{\mu}}
\Big)\sp 2
\le
n
\sum\sb{\jmath=1}\sp{n}\Norm{\frac{\p\sb\jmath v}{\mu}}\sp 2
=:
n\Norm{\frac{\bm\nabla v}{\mu}}\sp 2,
\]
while $\hat X v=r\p\sb r v$,
resulting in
\begin{equation}\label{xu-nu}
\norm{\hat X v}\sb{\C^N}\le r\norm{\bm\nabla v}\sb{\C^N},
\qquad
\forall v\in C\sp 1(\R\sp n,\C^N),
\end{equation}
we see that \eqref{enough1} will hold
whenever
\[
2\left(
\frac{n+(4\lambda r)^2}{\mu^2}
\right)
\le
\frac 1 2,
\]
\begin{equation}\label{enough-1}
\mu(r)\sp 2
\ge 4 n + 64\lambda^2+r^2.
\end{equation}
To satisfy \eqref{enough2},
again in view of \eqref{xu-nu},
it is enough to have
\begin{equation}\label{enough-2}
32\frac{r\varphi'}{\mu(r)\sp 2}
\le
1.
\end{equation}
To comply with both
\eqref{enough-1} and \eqref{enough-2},
it is enough to require that
\begin{equation}\label{enough-3}
\mu(r)
\ge
2
\Big(
n+16\lambda^2 r^2+8r\varphi'
\Big)^{1/2}.
\end{equation}
Taking into account \eqref{enough1} and \eqref{enough2},
the inequality \eqref{g41rrr} yields
\begin{align}
&
2\Norm{\frac{1}{\mu}
\Big[
(2n-1)
%%(\lambda-\jj\hat\alpha\varphi')
(\abs{\lambda}+\varphi')
+m
+2r\abs{\varphi''}
\Big]v}\sp 2
+
\frac{\norm{\mu(D\sb m\sp\varphi-\lambda)v}\sp 2}{2}
\nonumber\\
&
\ge
\frac 1 2\norm{\bm\nabla v}^2
+\norm{\big(\frac{\varphi'}{r}\big)^{1/2}\hat X v}^2
%\nonumber\\
%&
%\quad
+
\Big\langle v,
\Big[
\lambda\sp 2-m\sp 2+\varphi'\sp 2+2 r\varphi'\varphi''-(n-1)\varphi''
-(n-1)\sp 2\frac{\varphi'}{r}
-\frac{r\varphi''\sp 2}{\varphi'}
\Big]v\Big\rangle,
\nonumber
\end{align}
hence
\begin{eqnarray}
&&
\norm{\mu(D\sb m\sp\varphi-\lambda)v}\sp 2
\ge
\norm{\bm\nabla v}\sp 2
+
2\norm{(r\varphi')\sp{1/2} \partial\sb r v}\sp 2
\nonumber
\\
&&
\qquad\qquad\qquad\qquad
+
2\left\langle v,
\left[
\lambda\sp 2-m\sp 2+\varphi'\sp 2+2 r\varphi'\varphi''
-(n-1)\varphi''
-\frac{(n-1)\sp 2\varphi'}{r}-\frac{r\varphi''\sp 2}{\varphi'}
\right]
v\right\rangle
\nonumber
\\
&&
\qquad\qquad\qquad\qquad
-
4
\left\langle v,
\left[\frac{(2n-1)(\abs{\lambda}+\varphi')+m+2r\abs{\varphi''}}
{\mu}\right]^2
v\right\rangle,
\nonumber
\end{eqnarray}
and \eqref{euw} follows.

For $u\in H\sp 1\sb{\mathrm{comp}}(\varOmega\sb\rho,\C^N)$,
substituting $v=e\sp\varphi u$ into \eqref{euw}
and using the identity
$D\sb m\sp\varphi(e\sp\varphi u)=e\sp\varphi D\sb m u$
(cf. \eqref{def-dm-varphi}),
we also have
\begin{equation}\label{carleman-compact-u}
\norm{\bm\nabla(e^\varphi u)}^2
+
2\norm{(r\varphi')^{1/2}\p\sb r(e^\varphi u)}^2
+\langle e^\varphi u,Z e^\varphi u\rangle
\le
\norm{\mu e\sp\varphi (D\sb m-\lambda)u}^2
\end{equation}
for any $u\in H\sp 1\sb{\mathrm{comp}}(\varOmega\sb\rho,\C^N)$,
proving \eqref{euw-u}.
\end{proof}

\begin{lemma}\label{lemma-big-half}
For any $\varphi\in\mathscr{C}\sb\lambda(M,\calN,\rho,\nu)$
there is $R_0=R_0(\varphi)\ge\rho$, $R_0<\infty$, such that
for any $r\ge R_0$
the following inequality is satisfied:
\begin{equation}\label{f-ge-half}
Z(r)
\ge
\lambda^2-m^2
+\varphi'(r)^2+2r\varphi'(r)\varphi''(r),
\end{equation}
with $Z(r)$ defined in \eqref{def-Z}.
If additionally
\begin{equation}\label{ge-quarter}
\varphi''(r)\geq -\frac{\varphi'(r)}{4r},
\qquad
r\ge\rho,
\end{equation}
then
\begin{equation}\label{r-r}
R_0=R_0(M,\rho,m,n,\lambda),
\end{equation}
independent of $\calN>0$ and $\nu$,
and it can be chosen uniformly in
$\varphi\in\mathscr{C}\sb\lambda(M,\calN,\rho,\nu)$.
\end{lemma}

\begin{proof}
As follows from the definition \eqref{def-Z},
we need to satisfy the following inequality:
\begin{eqnarray}
&
2(n-1)\abs{\varphi''(r)}+(n-1)\sp 2\frac{\varphi'(r)}{r}
+\frac{r\varphi''(r)\sp 2}{\varphi'(r)}
%\nonumber
%\\
%&&
%\qquad\qquad
+4\left[
\frac{(2n-1)(\abs{\lambda}+\varphi'(r))+m+2r|\varphi''(r)|}{\mu}
\right]^2
\nonumber\\[1ex]
&
\qquad
\le \label{big-half}
\lambda\sp 2-m\sp 2+\varphi'(r)\sp 2
+2r\varphi'(r)\varphi''(r).
\end{eqnarray}
Taking into account the bound
$r\abs{\varphi''}\le M\varphi'$
as long as $r\ge\rho$
(cf. Definition~\ref{definition-c})
and simplifying some coefficients,
we see that
the inequality \eqref{big-half} will follow from
\[
2(M+n)^2
\frac{\varphi'}{r}
+4\left[
\frac{2
\left(
n\abs{\lambda}+m+(M+n)\varphi'
\right)}{\mu}
\right]^2
\le
\lambda\sp 2-m\sp 2+\varphi'\sp 2
+2r\varphi'\varphi''.
\]
Taking into account the bound
$\mu\ge 8\abs{\lambda}r$
which follows from \eqref{def-mu},
we see that
it suffices to satisfy the inequality
\begin{equation}
\label{big-half-1}
2(M+n)^2
\frac{\varphi'}{r}
+
\frac{\big(n\abs{\lambda}+m+(M+n)\varphi'\big)^2}{4\lambda^2r^2}
\le
\lambda\sp 2-m\sp 2+\varphi'\sp 2
+2r\varphi'\varphi''.
\end{equation}

\medskip

First we consider the case when $\varphi'$ is bounded.
Using the bound
\[
\lambda\sp 2-m\sp 2+\varphi'\sp 2+2r\varphi'\varphi''\ge \nu,
\qquad \forall r\ge\rho,
\]
we see that
we will have \eqref{big-half-1} satisfied
for $r\ge R$
as long as
\begin{equation}\label{enough}
2\frac{(M+n)^2}{r}\sup\sb{r\geq R} \varphi'(r)
+
\frac{\big(n\abs{\lambda}+m+(M+n)\sup\sb{r\geq R}\varphi'(r)\big)^2}
{4\lambda^2 r^2}
\le\nu.
\end{equation}
Thus,
\eqref{big-half}
is satisfied for $r\ge R_0(\varphi)$
with $R_0(\varphi)=C\sup\sb{r\ge\rho}\varphi'(r)$
as long as the constant $C>0$ is large enough.

\medskip

Now let us consider the case
$r\varphi''\geq -\varphi'/4$;
this case includes the situation when
$\lim_{r\to\infty}\varphi'(r)=+\infty$.
Due to this bound from below on $r\varphi''$,
\eqref{big-half-1} will be satisfied
if we provide
\begin{equation}\label{big-half-2}
2(M+n)^2\frac{\varphi'}{r}
+
\frac{\big(n\abs{\lambda}+m+(M+n)\varphi'\big)^2}{4\lambda^2 r^2}
\le
\frac{\lambda\sp 2-m\sp 2+\varphi'\sp 2}{2}.
\end{equation}
The above inequality will be satisfied
for all $\varphi'\ge 0$
as long as
$r$ is large enough to ensure that all the roots
of the polynomial function
\[
\zeta
\mapsto
\frac{\lambda\sp 2-m\sp 2+\zeta\sp 2}{2}
-
2(M+n)^2\frac{\zeta}{r}
-
\frac{\big(n\abs{\lambda}+m+(M+n)\zeta\big)^2}{4\lambda^2 r^2}
\]
are negative.
One can see that the lower bound on $r$
only depends on $M$, $m$, $n$, and $\abs{\lambda}$.
Note that one needs $\abs{\lambda}>m$.
\end{proof}

\begin{lemma}\label{lemma-du}
Let
$\varphi\in\mathscr{C}\sb\lambda(M,\calN,\rho,\nu)$
and let $R_0=R_0(\varphi)$ be as in Lemma~\ref{lemma-big-half}.
\begin{enumerate}
\item
\label{lemma-du-i}
Then there is the following  inequality
for any
$u\in H\sp 1\sb{\mathrm{comp}}(\varOmega\sb{R_0},\C^N)$:
\begin{equation}\label{carleman-compact-u-gamma}
\norm{\bm\nabla(e^\varphi u)}^2
+
2\norm{(r\varphi')^{1/2}\p\sb r(e^\varphi u)}^2
+\norm{\gamma e^\varphi u}^2
\le\norm{\mu e\sp\varphi (D\sb m-\lambda)u}^2
,
\end{equation}
with $\mu$ and $\gamma$
defined in \eqref{def-mu}, \eqref{def-g}.

\item
\label{lemma-du-ii}
Let
$V\in\mathscr{B}\left(H\sp 1\sb{\mathrm{comp}}(\varOmega\sb\rho,\C^N),\,L\sp 2(\varOmega\sb\rho,\C^N)\right)$
be a multiplication operator
and assume that there are $\kappa\in[0,1)$
and $R_1=R_1(\varphi,V)<\infty$
such that
\begin{equation}\label{ineq}
\norm{
\mu V v}
\le
\kappa
\left(
\norm{\bm\nabla v}^2
+
\norm{\gamma v}^2
\right)^{1/2},
\qquad \forall v\in H^1\sb{\mathrm{comp}}(\varOmega_{R_1},\C^N).
\end{equation}
Then
\begin{eqnarray}\label{carleman-compact-u-gamma-v}
(1-\kappa)
\left(
\norm{\bm\nabla(e^\varphi u)}^2
+
2\norm{(r\varphi')^{1/2}\p\sb r(e^\varphi u)}^2
+\norm{\gamma e^\varphi u}^2
\right)^{1/2}
%\nonumber
%\\
\le\norm{\mu e\sp\varphi (D\sb m+V-\lambda)u}
,
\end{eqnarray}
for any
$u\in H\sp 1\sb{\mathrm{comp}}(\varOmega\sb R,\C^N)$
with
$
R=\max(R_0(\varphi),R_1(\varphi,V)).
$
\end{enumerate}
\end{lemma}

\begin{proof}
The proof of Lemma~\ref{lemma-du}~\itref{lemma-du-i}
readily follows from
Lemma~\ref{lemma-bg-inequality}
and
Lemma~\ref{lemma-big-half}.

To prove Lemma~\ref{lemma-du}~\itref{lemma-du-ii},
we apply the assumption
\eqref{ineq}
(where we take
$v=e^\varphi u\in H^1\sb{\mathrm{comp}}(\varOmega\sb R,\C^N)$,
with $R=\max(R_0,R_1)$)
to the inequality
\eqref{carleman-compact-u-gamma},
obtaining
\[
(1-\kappa)
\left(
\norm{\bm\nabla(e^\varphi u)}^2
+
2\norm{(r\varphi')^{\frac 1 2}\p\sb r(e^\varphi u)}^2
+
\norm{\gamma e^\varphi u}^2
\right)^{\frac 1 2}
\le\norm{\mu e^\varphi(D_m+V-\lambda)u}.
\]
This proves \eqref{carleman-compact-u-gamma-v}.
\end{proof}

\begin{remark}\label{remark-c-l}
At this point, we need to mention how
Theorem~\ref{theorem-Carleman-l} is proved.
For the functions
$u\sb\pm=\Pi\sp\pm u$,
with
\begin{equation}\label{def-pi-pm}
\Pi\sp\pm=\frac 1 2(1\mp\jj J)
\end{equation}
the projections onto eigenspaces of $J$
corresponding to $\pm\jj\in\sigma(J)$,
we obtain from
Lemma~\ref{lemma-du}~\itref{lemma-du-i}
the following inequalities:
\[
\norm{e^\varphi\nabla u\sb{+}}^2
+2\norm{(r\varphi')^{1/2}u\sb{+}}
+\norm{\gamma\sb{+} e^\varphi u\sb{+}}^2
\le\norm{\mu\sb{+} e^\varphi((D_m-\omega)+\jj\lambda)u\sb{+}}^2,
\]

\[
\norm{e^\varphi\nabla u\sb{-}}^2
+2\norm{(r\varphi')^{1/2}u\sb{-}}
+\norm{\gamma\sb{-} e^\varphi u\sb{-}}^2
\le\norm{\mu\sb{-} e^\varphi((D_m-\omega)-\jj\lambda)u\sb{-}}^2,
\]
with appropriate expressions for $\mu\sb\pm$ and $\gamma\sb\pm$
(with $\abs{\lambda}\pm\abs{\omega}>m$
in place of $\lambda$),
and, since $\Pi\sp\pm$ are self-adjoint,
we may add up the above inequalities,
arriving at
\[
\norm{e^\varphi\nabla u}^2
+2\norm{(r\varphi')^{1/2}u}
+\norm{\min\sb\pm(\gamma\sb\pm) e^\varphi u}^2
\le\norm{\max\sb\pm(\mu\sb\pm)e^\varphi(J(D_m-\omega)-\lambda)u}^2.
\]
Introducing $V$ into the estimates
for $D_m$ and for $J(D_m-\omega)$
is done verbatim.
\end{remark}

Let us extend \eqref{euw-u}
to $u\in H\sp 1\sb 0(\varOmega\sb\rho,\C^N)$
which are no longer compactly supported.

\begin{lemma}\label{lemma-asdf}
Let
$\varphi\in\mathscr{C}\sb\lambda(M,\calN,\rho,\nu)$.

\begin{enumerate}
\item
\label{lemma-asdf-i}
Let $R_0=R_0(\varphi)\ge\rho$ be
as in Lemma~\ref{lemma-big-half},
so that
\eqref{f-ge-half}
is satisfied for $r\ge R_0$.
Then, for any $u\in H\sp 1\sb 0(\varOmega\sb{R_0},\C^N)$,
one has:
\begin{equation}
\label{euw-u-1}
\norm{\bm\nabla(e^\varphi u)}^2
+
2\norm{(r\varphi')^{1/2}\p\sb r(e^\varphi u)}^2
+
\norm{\gamma e^\varphi u}^2
\le\norm{\mu e^\varphi(D\sb m-\lambda)u}^2.
\end{equation}
\item
\label{lemma-asdf-ii}
Let $V\in L^n\sb{\mathrm{loc}}(\R^n,\End(\C^N))$
and assume that there are $\kappa\in[0,1)$
and $R_1=R_1(\varphi,V)<\infty$,
$R_1\ge\rho$ such that
for any $v\in H^1\sb 0(\varOmega_{R_1},\C^N)$
one has
\begin{equation}\label{ineq-2}
\norm{\mu V v}
\le
\kappa
\left(
\norm{\bm\nabla v}^2
+
\norm{\gamma v}^2
\right)^{1/2}.
\end{equation}
If $V=0$, then we set $R_1(\varphi,0)=\rho$.
Then
for any $u\in H\sp 1\sb 0(\varOmega\sb R,\C^N)$
with
$R=\max(R_0(\varphi),R_1(\varphi,V))$,
one has
\begin{eqnarray}\label{carleman-compact-u-gamma-v-2}
(1-\kappa)
\left(
\norm{\bm\nabla(e^\varphi u)}^2
+
2\norm{(r\varphi')^{1/2}\p\sb r(e^\varphi u)}^2
+
\norm{\gamma e^\varphi u}^2
\right)^{1/2}
%\nonumber
%\\
\le\norm{\mu e\sp\varphi (D\sb m+V-\lambda)u}
,
\end{eqnarray}
\end{enumerate}
\end{lemma}
\begin{remark}
It is enough to assume that \eqref{ineq-2}
takes place for $v\in H^1\sb {\mathrm{comp}}(\varOmega_{R_1},\C^N)$,
since the latter implies \eqref{ineq-2} for
$v\in H^1\sb 0(\varOmega_{R_1},\C^N)$
using ideas of {\bf Step 0} of the proof below.
\end{remark}
\begin{proof}
We will prove
Lemma~\ref{lemma-asdf}~\itref{lemma-asdf-ii};
then
Lemma~\ref{lemma-asdf}~\itref{lemma-asdf-i}
will follow as well.

\begin{proofstep}
\item[{\bf Step 0.\ \ }]
First, we consider the case when $\varphi(r)$ is bounded.
Let $\eta\in C^\infty\sb{\mathrm{comp}}([-2,2])$,
$0\le\eta\le 1$,
$\eta\at{[-1,1]}\equiv 1$,
and define
$\eta\sb j(x)=\eta(x/j)$.
Let $u\in H\sp 1(\R\sp n,\C^N)$ with
$\supp u\subset\varOmega\sb R$,
and define
\[
u\sb j=\eta\sb j u\in H\sp 1\sb{\mathrm{comp}}(\varOmega\sb R,\C^N).
\]
Applying Lemma~\ref{lemma-du}~\itref{lemma-du-ii}
to $u\sb j$,
we have
\[
(1-\kappa)
\left(
\norm{\bm\nabla(e^\varphi u_j)}^2
+
2\norm{(r\varphi')^{1/2}\p\sb r(e^\varphi u_j)}^2
+\norm{\gamma e^\varphi u_j}^2
\right)^{1/2}
%\]
%\[
\le
\norm{\mu e\sp\varphi (D\sb m+V-\lambda)u_j}.
\]
Using the identities
\[
\bm\nabla(e^\varphi u\sb j)
=
\eta\sb j\bm\nabla(e^\varphi u)
+e^\varphi u\bm\nabla\eta\sb j,
\]

\[
(D\sb m+V-\lambda)u\sb j
=\eta\sb j(D\sb m+V-\lambda) u-\jj(\bm\alpha\cdot\bm\nabla\eta\sb j)u,
\]
one has
\begin{eqnarray}
\label{dct}
&
(1-\kappa)
\Big(
\left(
\norm{\eta\sb j\bm\nabla(e^\varphi u)}
-\norm{e^\varphi u\bm\nabla\eta\sb j}
\right)^2
%\nonumber
%\\
%&&
+
2\left(\norm{\eta_j(r\varphi')^{1/2}\p\sb r(e^\varphi u)}
-\norm{(r\varphi')^{1/2}e^\varphi u\p\sb r \eta_j}\right)^2
+
\norm{\gamma e^\varphi u\eta\sb j}^2
\Big)^{1/2}
\nonumber
\\[1ex]
&
\le
\norm{\mu e\sp\varphi \eta\sb j(D\sb m+V-\lambda)u}
+
\norm{\mu e\sp\varphi (\bm\alpha\cdot\bm\nabla\eta\sb j)u}.
\end{eqnarray}
We claim that
\begin{equation}\label{to-zero-12}
\lim\sb{j\to\infty}
\norm{e^\varphi u\bm\nabla\eta\sb j}
=0,
\qquad
\lim\sb{j\to \infty}\norm{(r\varphi')^{1/2}e^\varphi u\p\sb r \eta_j}=0,
\end{equation}
\begin{equation}\label{to-zero-3}
\lim\sb{j\to\infty}
\norm{\mu e^\varphi
(\bm\alpha\cdot\bm\nabla\eta\sb j)u}
=0.
\end{equation}
Let us prove the inequality \eqref{to-zero-3}.
According to our assumptions, $u$ is in $L\sp 2$ and $\varphi$
is bounded,
hence $e\sp\varphi  u\in L\sp 2$ and
$
\lim\sb{j\to\infty}
\norm{\chi\sb{[j,2j]}(\abs{x})e\sp\varphi u}\sb{L\sp 2}=0,
$
while
\begin{equation}\label{q-nabla-eta-c}
\norm{\mu\nabla\eta\sb{j}}\sb{L^\infty}
\le
\norm{\nabla\eta\sb j}\sb{L^\infty}
\norm{\chi\sb{[-2j,2j]}\mu}\sb{L^\infty}
=\frac{1}{j}\norm{\nabla\eta}\sb{L^\infty}
\norm{\chi\sb{[-2j,2j]}\mu}\sb{L^\infty}
\end{equation}
is bounded since $\mu(r)=O(\langle r\rangle)$
(cf. \eqref{def-mu}),
due to $\varphi$ assumed bounded, so that $\varphi'\to 0$ as $r\to\infty$.
The proof of the second inequality
in \eqref{to-zero-12}
is the same
since $(r\varphi')^{1/2}\le(\calN r^2)^{1/2}=O(\langle r\rangle)$,
while
the proof of the first inequality is slightly simpler
since there is no linearly growing factor.

Taking into account \eqref{to-zero-12}, \eqref{to-zero-3}
and applying the Fatou lemma to
$\norm{\eta\sb j\bm\nabla(e^\varphi u)}$,
$\norm{\gamma e^\varphi u\eta\sb j}$,
and
$\norm{\mu e^\varphi\eta\sb j(D\sb m+V-\lambda) u}$,
we conclude from \eqref{dct} that, as we stated,
\begin{eqnarray}\label{carleman-noncompact}
(1-\kappa)
\left(
\norm{\bm\nabla(e^\varphi u)}^2
+
2\norm{(r\varphi')^{1/2}\p\sb r(e^\varphi u)}^2
+\norm{\gamma e^\varphi u}^2
\right)^{1/2}
%\nonumber
%\\
\le
\norm{\mu e\sp\varphi (D\sb m+V-\lambda)u}.
\end{eqnarray}

Unbounded $\varphi$
are considered precisely as in~\cite[Theorem 3]{MR880983},
which closely follows the approach of~\cite{MR685028}.
We assume that
$\varphi\sb 0:=\varphi\in\mathscr{C}\sb\lambda(M,\calN,\rho,\nu)$,
for some $M,\,\calN,\,\rho\ge 1$ and $\nu>0$.
Without loss of generality, we also assume that
$\varphi\sb 0(1)=0$.
Below we will consider sequences $(\varphi\sb\epsilon)\sb{\epsilon>0}$
converging pointwise to $\varphi\sb 0$
from below.

\item
Let us assume that
$\varphi\in\mathscr{C}\sb\lambda(M,\calN,\rho,\nu)$
is unbounded but that $\varphi'\to 0$ as $r\to\infty$.
We approximate $\varphi\sb 0=\varphi$ by
$\varphi\sb\epsilon\in C\sp 2(\R\sb{+})$,
$\epsilon\in(0,1)$:
\[
\varphi\sb\epsilon(r)
=\int\sb{1}\sp r
\frac{\varphi\sb 0'(t)}{1+\epsilon t\sp 2}\,dt.
\]
Then,
for $r\ge\rho$,
the functions
$\varphi\sb\epsilon$
are monotonically increasing,
satisfy $\sup\sb{r\ge 0}\varphi\sb\epsilon(r)<\infty$,
and
\begin{equation}\label{convergences}
\varphi\sb\epsilon(r)\nearrow\varphi\sb 0(r),
\qquad
\varphi\sb\epsilon'(r)\nearrow\varphi\sb 0'(r),
\qquad
\varphi\sb\epsilon''(r)\to\varphi\sb 0''(r),
\qquad
r\ge\rho.
\end{equation}
To reduce the argument to {\bf Step 0}
(based on Lemma~\ref{lemma-du}~\itref{lemma-du-ii}),
we need to check that
the inequality \eqref{f-ge-half} in Lemma~\ref{lemma-big-half}
is satisfied by $\varphi\sb\epsilon$
and the corresponding $Z\sb\epsilon$
(given by \eqref{def-Z}
with $\varphi\sb\epsilon$ instead of $\varphi$)
for $r\ge R_0=R_0(\varphi)$
as long as $\epsilon>0$ is sufficiently small.
Since
\begin{equation}\label{p-p-e-1}
0<
\varphi\sb\epsilon'(r)=
\frac{\varphi\sb 0'(r)}{1+\epsilon r\sp 2}
<\varphi\sb 0'(r)
\le \calN r,
\qquad
r\ge\rho,
\end{equation}
one has
$
\varphi\sb\epsilon''(r)
=\frac{\varphi\sb 0''(r)}{1+\epsilon r\sp 2}
-\frac{2\varphi\sb 0'(r)}{r}\frac{\epsilon r\sp 2}{(1+\epsilon r\sp 2)\sp 2}
$
and hence
\begin{equation}\label{p-p-e-2}
\frac{\varphi\sb\epsilon''(r)}{\varphi\sb\epsilon'(r)}
=
\frac{\varphi\sb 0''(r)}{\varphi\sb 0'(r)}
-\frac{2}{r}\frac{\epsilon r\sp 2}{1+\epsilon r\sp 2},
\qquad
\Abs{\frac{r\varphi\sb\epsilon''(r)}{\varphi\sb\epsilon'(r)}}
\le
\Abs{\frac{r\varphi\sb 0''(r)}{\varphi\sb 0'(r)}}
+2
\le M+2.
\end{equation}
We claim that for any $s<1$
arbitrarily close to $1$
there exists $\epsilon\sb 0=\epsilon\sb 0(s)\in(0,1)$
such that
\begin{equation}\label{weaker-class}
\varphi\sb\epsilon\in\mathscr{C}\sb\lambda(M+2,\calN,R_0,s\nu),
\qquad \forall\epsilon\in(0,\epsilon_0);
\end{equation}
according to Definition~\ref{definition-c},
we are left to verify that
\begin{equation}\label{s-nu}
\lambda^2-m^2+\varphi'^2+2r\varphi'\varphi''\ge s\nu,
\qquad
r\ge R_0.
\end{equation}
Define
\begin{equation}\label{def-nu-epsilon}
\nu\sb\epsilon
=\inf\sb{r>\rho}
\big(
\lambda^2-m^2
+\varphi\sb\epsilon'(r)^2
+2r\varphi\sb\epsilon'(r)\varphi\sb\epsilon''(r)
\big).
\end{equation}
Since
\[
2r\varphi\sb\epsilon'\varphi\sb\epsilon''
=
\frac{2r\varphi\sb 0'\varphi\sb 0''(r)}{(1+\epsilon r^2)^2}
-\frac{4\epsilon r^2\varphi\sb 0'(r)^2}{(1+\epsilon r^2)^3},
\]
one has
\begin{align}\label{r-r-r}
&
\varphi\sb\epsilon'(r)^2
+2r\varphi\sb\epsilon'(r)\varphi\sb\epsilon''(r)
-\varphi\sb 0'(r)^2-2r\varphi\sb 0'(r)\varphi\sb 0''(r)
%\nonumber
%\\
%&
=
\Big(\frac{1}{(1+\epsilon r^2)^2}-1\Big)
\big(\varphi\sb 0'(r)^2
+
2r\varphi\sb 0'\varphi\sb 0''(r)\big)
-\frac{4\epsilon r^2\varphi\sb 0'(r)^2}{(1+\epsilon r^2)^3}.
\end{align}
Due to
$2r\abs{\varphi\sb 0''(r)}\le M\varphi\sb 0'(r)$
and $\varphi\sb 0'(r)\to 0$
as $r\to\infty$,
the absolute value of the right-hand side of
\eqref{r-r-r}
goes to zero as $r\to\infty$
uniformly in $\epsilon\in(0,1)$.
Therefore,
for any fixed $s\in (0,1)$,
we may choose some finite $R\sp\ast=R\sp\ast(s)\ge R_0$
(with $R_0=R_0(\varphi)$ from Lemma~\ref{lemma-big-half})
large enough so that
the right-hand side of \eqref{r-r-r}
is smaller than $s\nu$
for $r\ge R\sp\ast$, $\epsilon\in(0,1)$,
while
the left-hand side of
\eqref{big-half-1}
(with $\varphi\sb\epsilon$ instead of $\varphi$
and with $M+2$ instead of $M$)
is smaller than $(1-s)\nu$.
Then
for $\epsilon\in(0,1)$
the functions
$\varphi\sb\epsilon$
will satisfy \eqref{big-half-1}
(with $M+2$ instead of $M$)
and hence \eqref{f-ge-half}
for $r\ge R\sp\ast$.

At the same time,
the convergences
\eqref{convergences}
are uniform on the interval $r\in[R_0,R\sp\ast]$.
Since $\varphi$ satisfies
the inequality \eqref{f-ge-half}
for $r\ge R_0$,
there is $\epsilon_0=\epsilon_0(s)\in(0,1)$ sufficiently small
so that the functions $\varphi\sb\epsilon$
with $\epsilon\in(0,\epsilon_0)$
satisfy
\begin{equation}\label{f-ge-half-epsilon}
Z\sb\epsilon(r)
\ge
s\Big
(\lambda^2-m^2
+\varphi\sb\epsilon'(r)^2
+2r\varphi\sb\epsilon'(r)
\varphi\sb\epsilon''(r)
\Big),
\qquad
r\in[R_0,R\sp\ast],
\end{equation}
where $Z\sb\epsilon$ is defined by
\eqref{def-Z} with $\varphi\sb\epsilon$
instead of $\varphi$,
while
$\nu\sb\epsilon$ defined in \eqref{def-nu-epsilon}
will satisfy
\begin{equation}\label{nu-epsilon-s-nu}
\nu\sb\epsilon>s\nu,
\qquad \forall\epsilon\in(0,\epsilon_0).
\end{equation}
Together with
\eqref{p-p-e-1} and \eqref{p-p-e-2},
this leads to the desired inclusion \eqref{weaker-class}
and to the inequality \eqref{f-ge-half-epsilon}
satisfied for all $r\ge R_0=R_0(\varphi)$.

The previous argument shows that
there is $\epsilon_0\in(0,1)$
so that for any $\epsilon\in(0,\epsilon_0)$,
\begin{equation}\label{ineq-2-epsilon}
\left|\varphi\sb\epsilon'(r)^2
+2r\varphi\sb\epsilon'(r)\varphi\sb\epsilon''(r)
-\varphi\sb 0'(r)^2-2r\varphi\sb 0'(r)\varphi\sb 0''(r)\right|
\leq (1-s)\nu,
\quad
r\geq R_0.
\end{equation}
Then notice that
since \eqref{ineq-2} is satisfied for $\varphi_0$,
one has
\begin{equation}\label{mu-v-v}
\norm{\mu_\epsilon V v}
\le
\norm{\mu V v}
\le
\kappa
\left(
\norm{\bm\nabla v}^2
+
\norm{\gamma v}^2
\right)^{1/2},
\qquad \forall v\in H^1\sb 0(\varOmega_{R_1},\C^N),
\end{equation}
where $\mu_\epsilon$ is the expression $\mu$ in \eqref{def-mu} for $\varphi_\epsilon$
and
$R_1=R_1(\varphi,V)$.

Due to \eqref{mu-v-v},
we deduce that for all $\epsilon \in (0,\epsilon_0)$
and $v\in H^1\sb 0(\varOmega_{R_1},\C^N)$,
one has
\begin{equation}\label{mu-v-v-gamma}
\norm{
\mu_\epsilon V v}
\le
\kappa
\max\left\{1,\Norm{\frac{\gamma}{\gamma_\epsilon}
}\sb{L^\infty(\supp v)}\right\}
\left(
\norm{\bm\nabla v}^2
+
\norm{\gamma_\epsilon v}^2
\right)^{1/2},
\end{equation}
where $\gamma_\epsilon$ is defined by the expression \eqref{def-g}
with $\varphi_\epsilon$ in place of $\varphi$.
We notice that,
in view of
\eqref{def-nu-epsilon},
\eqref{nu-epsilon-s-nu},
and \eqref{ineq-2-epsilon},
for $\epsilon\in(0,\epsilon_0)$,
\[
\max\left\{
1,\Abs{\frac{\gamma}{\gamma_\epsilon}}
\right\}
\le
\max\left\{
1,\frac{\gamma^2}{\gamma_\epsilon^2}
\right\}
\le
1+\frac{\abs{\gamma^2-\gamma_\epsilon^2}}{\gamma_\epsilon^2}
\le 1+\frac{(1-s)\nu}{s\nu}
=\frac{1}{s},
\qquad
r\ge R_0,
\]
hence \eqref{mu-v-v-gamma} yields
\[
\norm{
\mu_\epsilon V v}
\le
\frac{\kappa}{s}
\left(
\norm{\bm\nabla v}^2
+
\norm{\gamma_\epsilon v}^2
\right)^{1/2},
\qquad \forall v\in H^1\sb 0(\varOmega_R,\C^N),
\]
with $R=\max(R_0(\varphi),R_1(\varphi,V))$.
Due to this bound, \eqref{carleman-noncompact}
yields the inequality
\begin{eqnarray}\label{carleman-noncompact-epsilon}
\left(1-\frac{\kappa}{s}\right)
\left(
\norm{\bm\nabla(e^{\varphi\sb\epsilon} u)}^2
+
2\norm{(r\varphi\sb\epsilon')^{1/2}\p\sb r(e^{\varphi\sb\epsilon} u)}^2
+\norm{\gamma e^{\varphi\sb\epsilon} u}^2
\right)^{1/2}
%\nonumber
%\\
\le
\norm{\mu e\sp{\varphi\sb\epsilon} (D\sb m+V-\lambda)u},
\end{eqnarray}
valid for any $\epsilon\in(0,\epsilon\sb 0)$
and any $u\in H\sp 1\sb 0(\varOmega\sb R,\C^N)$.

By the Fatou lemma
applied to the left-hand side
of \eqref{carleman-noncompact-epsilon}
(where we use the decomposition
$\nabla(e^{\varphi\sb\epsilon} u)=e^{\varphi\sb\epsilon}\nabla u+  e^{\varphi\sb\epsilon}(\nabla\varphi\sb\epsilon) u $)
and the dominated convergence theorem applied
to the right-hand side,
the same inequality
\eqref{carleman-noncompact-epsilon}
holds for $\varphi\sb 0=\varphi$ instead of $\varphi\sb\epsilon$.
Since $s\in(0,1)$ could be chosen arbitrarily close to $1$,
we also have
\begin{eqnarray}\label{carleman-noncompact-epsilon-1}
(1-\kappa)
\left(
\norm{\bm\nabla(e^{\varphi} u)}^2
+
2\norm{(r\varphi')^{1/2}\p\sb r(e^{\varphi} u)}^2
+\norm{\gamma e^{\varphi} u}^2
\right)^{1/2}
%\nonumber
%\\
\le
\norm{\mu e\sp{\varphi} (D\sb m+V-\lambda)u},
\end{eqnarray}
for any $u\in H\sp 1\sb 0(\varOmega\sb R,\C^N)$.

\item
Assume that
$\varphi\in\mathscr{C}\sb\lambda(M,\calN,\rho,\nu)$
is such that
$\varphi'$ is bounded at infinity.
Let $R_0=R_0(\varphi)\ge\rho$,
$R_0<\infty$
be as in Lemma~\ref{lemma-big-half},
so that $\varphi$
satisfies the inequality
\eqref{f-ge-half}
for $r\ge R_0$.
(Since $\varphi'$ is bounded,
such $R_0$ exists by \eqref{enough}.)
We approximate $\varphi\sb 0:=\varphi$ by
$\varphi\sb\epsilon\in C\sp 2(\R\sb{+})$,
$\epsilon\in(0,1/4)$,
as follows:
\[
\varphi\sb\epsilon(r)
=\varphi\sb 0(r\sp{1-\epsilon}).
\]
Since $\varphi\sb 0(r)$ is increasing
and $\rho\ge 1$,
for each $\epsilon\in(0,1/4)$ and $r\ge\rho$
we have:
$\varphi\sb\epsilon(r)\le\varphi\sb 0(r)$,
$\varphi\sb\epsilon'(r)>0$,
$\lim\sb{r\to\infty}\varphi\sb\epsilon'(r)=0$,
$\varphi\sb\epsilon(r)\nearrow\varphi\sb 0(r)$,
$\varphi\sb\epsilon'(r)\to\varphi\sb 0'(r)$.
Thus,
\[
0<\varphi\sb\epsilon'(r)
=(1-\epsilon)r\sp{-\epsilon}
\varphi\sb 0'(r\sp{1-\epsilon})
\le
(1-\epsilon)r\sp{-\epsilon}
\calN r\sp{1-\epsilon}
\le\calN r,
\]

\[
\varphi\sb\epsilon''(r)
=(1-\epsilon)\sp 2r\sp{-2\epsilon}
\varphi\sb 0''(r\sp{1-\epsilon})
-\epsilon(1-\epsilon)r\sp{-\epsilon-1}
\varphi\sb 0'(r\sp{1-\epsilon}),
\]

\[
\frac{\varphi\sb\epsilon''(r)}{\varphi\sb\epsilon'(r)}
=
(1-\epsilon)r\sp{-\epsilon}
\frac{\varphi\sb 0''(r\sp{1-\epsilon})}{\varphi\sb 0'(r\sp{1-\epsilon})}
-\epsilon r\sp{-1},
\]

\[
\Abs{\frac{r\varphi\sb\epsilon''(r)}{\varphi\sb\epsilon'(r)}}
\le
(1-\epsilon)
\Abs{
\frac{r\sp{1-\epsilon}\varphi\sb 0''(r\sp{1-\epsilon})}
{\varphi\sb 0'(r\sp{1-\epsilon})}}
+\epsilon
\le M.
\]
In the last inequality,
we took into account that $M\ge 1$.

    From the identity
\begin{align*}
&
\varphi\sb\epsilon'(r)^2+2r \varphi\sb\epsilon'(r)\varphi\sb\epsilon''(r)
\\[1ex]
&=(1-\epsilon)^2(1-2\epsilon)r\sp{-2\epsilon}(\varphi\sb 0'(r\sp{1-\epsilon}))^2+2r (1-\epsilon)^3r\sp{-3\epsilon}\varphi\sb
0'(r\sp{1-\epsilon})\varphi\sb 0''(r\sp{1-\epsilon})
\\[1ex]
&=(1-\epsilon)^2(1-2\epsilon)r\sp{-2\epsilon}\left[(\varphi\sb 0'(r\sp{1-\epsilon}))^2+2r\varphi\sb
0'(r\sp{1-\epsilon})\varphi\sb 0''(r\sp{1-\epsilon})\right]
\\[1ex]
&
\quad
+\left[(1-\epsilon)^3r\sp{-3\epsilon}-(1-\epsilon)^2(1-2\epsilon)r\sp{-2\epsilon}\right]2r \varphi\sb0'(r\sp{1-\epsilon})\varphi\sb 0''(r\sp{1-\epsilon})
\end{align*}
we deduce as in the previous step that for any $s\in(0,1)$
there is $\epsilon\sb 0=\epsilon\sb 0(s)\in(0,1/4)$
such that for $\epsilon\in(0,\epsilon\sb 0)$
one has
$\varphi\sb\epsilon\in\mathscr{C}\sb\lambda(M,\calN,R_0(\varphi),s\min\{\nu,\lambda^2-m^2\})$.
We also deduce that $\gamma\sb\epsilon$ converges uniformly to $\gamma\sb0$.

Now we consider the term
\[
 \mu\sb\epsilon(r)
=
2
\Big(
n+16\lambda^2 r^2+8r\varphi\sb\epsilon'
\Big)^{1/2},
\]
rewritten as
\[
 \mu\sb\epsilon(r)
=
2r\rho_\epsilon(r),
\]
with
\[
\rho\sb\epsilon(r)= \Big(
\frac{n}{r^2}+16\lambda^2 +8\frac{\varphi\sb\epsilon'}{r}
\Big)^{1/2}.
\]
Due to the local uniform convergence of $\varphi\sb\epsilon'$ to $\varphi\sb0'=\varphi$, $\rho\sb\epsilon$ converges locally uniformly to $\rho\sb0$. Since $\varphi\sb\epsilon'$ is bounded at infinity uniformly in $\epsilon$, $\rho\sb\epsilon-\rho\sb0$ is small at infinity uniformly in $\epsilon$. Hence $\rho\sb\epsilon-\rho\sb0$ is smaller than
$\rho\sb0$
(which is bounded from below by $4\abs{\lambda}$)
multiplied by an arbitrarily small constant,
uniformly in $\epsilon$. We thus obtain
from \eqref{ineq-2}
that for any $\kappa'\in(0,\kappa)$ there exists $\epsilon_1=\epsilon_1(s,\kappa')\in(0,\epsilon_0(s))$ such that, for any $\epsilon\in(0,\epsilon_1)$
and
$v\in H^1\sb 0(\varOmega_R,\C^N)$,
with $R=\max(R_0(\varphi),R_1(\varphi,V))$,
one has
\begin{equation*}
\norm{
\mu_\epsilon V v}
\le
\kappa
\left(
\norm{\bm\nabla v}^2
+
\norm{\gamma v}^2
\right)^{1/2}
\le
\kappa'
\left(
\norm{\bm\nabla v}^2
+
\norm{\gamma_\epsilon v}^2
\right)^{1/2}.
\end{equation*}
This allows to conclude as in the previous step (using the previous step instead of
{\bf Step 0}),
proving that
\begin{eqnarray}
(1-\kappa')
\left(
\norm{\bm\nabla(e^{\varphi\sb\epsilon} u)}^2
+
2\norm{(r\varphi\sb\epsilon')^{1/2}\p\sb r(e^{\varphi\sb\epsilon} u)}^2
+\norm{\gamma e^{\varphi\sb\epsilon} u}^2
\right)^{1/2}
\nonumber
\\
\qquad
\le
\norm{\mu e\sp{\varphi\sb\epsilon} (D\sb m+V-\lambda)u},
\nonumber
\end{eqnarray}
with $R=\max(R_0(\varphi),R_1(\varphi,V))$
independent of $\epsilon\in(0,\epsilon\sb 1)$.
Using the same reasoning as above,
we conclude that this inequality is also satisfied by
$\varphi\sb 0=\varphi$.
Finally, since $\kappa'\lesssim\kappa$
could be chosen arbitrarily close to $\kappa$,
we also have
\[
(1-\kappa)
\left(
\norm{\bm\nabla(e^{\varphi} u)}^2
+
2\norm{(r\varphi')^{1/2}\p\sb r(e^{\varphi} u)}^2
+\norm{\gamma e^{\varphi} u}^2
\right)^{\frac 1 2}
\le
\norm{\mu e\sp{\varphi} (D\sb m+V-\lambda)u}.
\]

\item
Assume that
$\varphi\in\mathscr{C}\sb\lambda(M,\calN,\rho,\nu)$
is such that
$\varphi'$ is unbounded at infinity
(this implies that $\varphi''\ge 0$).
It follows from Lemma~\ref{lemma-big-half}
that there is
$R_0=R_0(M,\rho,m,n,\lambda)$,
$R_0\ge\rho$,
such that $\varphi$ satisfies the inequality
\eqref{f-ge-half}
for $r\ge R_0$.

We approximate $\varphi\sb 0:=\varphi$ by
$\varphi\sb\epsilon\in C\sp 2(\R\sb{+})$,
$\epsilon\in(0,1)$:
\[
\varphi\sb\epsilon(r)
=\int\sb{1}\sp r\frac{\varphi\sb 0'(t)}{1+\epsilon\varphi\sb 0'(t)}\,dt.
\]
Then $\varphi\sb\epsilon$ are monotonically increasing,
satisfy $\sup\sb{r\ge 0}\varphi\sb\epsilon'(r)<\infty$,
and, for each $r\ge\rho$,
$\varphi\sb\epsilon(r)\nearrow\varphi\sb 0(r)$,
$\varphi\sb\epsilon'(r)\nearrow\varphi\sb 0'(r)$.
Moreover, for $r\ge\rho$,
the following inequalities hold:
\[
0<\varphi\sb\epsilon'(r)
=
\frac{\varphi\sb 0'(r)}{1+\epsilon\varphi\sb 0'(r)}
\le \calN r,
\]

\[
\varphi\sb\epsilon''(r)
=\frac{\varphi\sb 0''(r)}{1+\epsilon\varphi\sb 0'(r)}
-\frac{\epsilon\varphi\sb 0'(r)\varphi\sb 0''(r)}
{(1+\epsilon\varphi\sb 0'(r))\sp 2}
=\frac{\varphi\sb 0''(r)}{(1+\epsilon\varphi\sb 0'(r))\sp 2}
\ge 0,
\]

\[
\frac{\varphi\sb\epsilon''(r)}{\varphi\sb\epsilon'(r)}
=
\frac{\varphi\sb 0''(r)}
{\varphi\sb 0'(r)}
\frac{1}{1+\epsilon\varphi\sb 0'(r)},
\qquad
\Abs{\frac{r\varphi\sb\epsilon''(r)}{\varphi\sb\epsilon'(r)}}
\le M.
\]
One has
\begin{align*}
(\varphi\sb\epsilon'(r))^2+2r \varphi\sb\epsilon'(r)\varphi\sb\epsilon''(r)
&=\frac{\varphi\sb 0'(r)^2+2r\varphi\sb 0'(r)\varphi\sb 0''(r)}{(1+\epsilon\varphi\sb 0'(r))\sp 3}
+\frac{\epsilon\varphi\sb 0'(r)^3}{(1+\epsilon\varphi\sb 0'(r))\sp 3};
\end{align*}
in the case $\epsilon\varphi'<2^{1/3}-1$, one concludes that
\begin{eqnarray}
\lambda^2-m^2
+
(\varphi\sb\epsilon'(r))^2+2r \varphi\sb\epsilon'(r)\varphi\sb\epsilon''(r)
\ge
\lambda^2-m^2
+
\frac{\nu-(\lambda^2-m^2)}{2}
%\nonumber
%\\
\ge
\frac{\lambda^2-m^2+\nu}{2},
\nonumber
\end{eqnarray}
while in the case $\epsilon\varphi'\ge 2^{1/3}-1$, one deduces
\begin{align*}
\lambda^2-m^2
+
(\varphi\sb\epsilon'(r))^2+2r \varphi\sb\epsilon'(r)\varphi\sb\epsilon''(r)
\ge
\lambda^2-m^2
-
\frac{\abs{\nu-(\lambda^2-m^2)}}{2}
+\frac{(2^{\frac 1 3}-1)^3}{2\epsilon^2},
\end{align*}
which is also larger than
$\frac{\lambda^2-m^2+\nu}{2}$ provided that
$\epsilon\in(0,\epsilon_0)$ with $\epsilon_0\in(0,1)$ sufficiently small.
One concludes that
$\varphi\sb\epsilon\in\mathscr{C}\sb\lambda(M,\calN,\rho,
\frac{\lambda^2-m^2+\nu}{2})$
for all $\epsilon\in(0,\epsilon_0)$
and uses the result and the ideas
from the previous step
to prove the inequality
\[
(1-\kappa)
\left(
\norm{\bm\nabla(e^{\varphi\sb\epsilon} u)}^2
+
2\norm{(r\varphi\sb\epsilon')^{1/2}\p\sb r(e^{\varphi\sb\epsilon} u)}^2
+\norm{\gamma e^{\varphi\sb\epsilon} u}^2
\right)^{\frac 1 2}
%\]
%\[
%\qquad\qquad\qquad\qquad
\le
\norm{\mu e\sp{\varphi\sb\epsilon} (D\sb m+V-\lambda)u},
\qquad \forall u\in H^1_0(\varOmega\sb R,\C^N),
\]
with $R=\max(R_0(\varphi),R_1(\varphi,V))$
independent of $\epsilon\in(0,\epsilon_0)$,
and applies the Fatou lemma
and the dominated convergence theorem
to the above inequality
to prove \eqref{euw-u-1}.
\qedhere
\end{proofstep}
\end{proof}
Theorem~\ref{theorem-Carleman}~\itref{theorem-Carleman-i}
follows from
Lemma~\ref{lemma-asdf}~\itref{lemma-asdf-ii}.

\bigskip

Now we proceed to the proof
of Theorem~\ref{theorem-Carleman}~\itref{theorem-Carleman-ii}.
Let
$\varphi\in\mathscr{C}\sb\lambda(M,\calN,\rho,\nu)$,
with $\varphi''(r)\ge 0$ for $r\ge\rho$.
Then, by Lemma~\ref{lemma-big-half},
the inequality \eqref{f-ge-half}
is satisfied for $r\ge R_0=R_0(M,\rho,m,n,\lambda)$,
independent of a particular representative
$\varphi\in\mathscr{C}\sb\lambda(M,\calN,\rho,\nu)$.
At the same time,
using \eqref{Eq:VSmallInfinity} with $f(r)=\varphi'(r)$
(note that
$\sup\sb{r>0}{f(r)}/{\langle r\rangle}\le\calN<\infty$,
in agreement with the assumptions on the function $f$
which appears in \eqref{Eq:VSmallInfinity}),
and taking into account that $\varphi''\ge 0$,
we see that the inequality \eqref{ineq-0}
is satisfied for $r\ge R_1=R_1(\varphi,V)$
if one chooses $R_1=R_1(\varphi,V)=R_2(V)$,
which would not depend on $\varphi$.
As follows from
Lemma~\ref{lemma-asdf},
the inequality \eqref{carleman-compact-u-gamma-v-2}
is satisfied for
$u\in H^1_0(\varOmega\sb R,\C^N)$
with
\[
R=\max(R_0(\varphi),R_1(\varphi,V))
=\max(R_0(M,\rho,m,n,\lambda),R_2(V))
\]
independent of a particular $\varphi$.
This finishes the proof of
Theorem~\ref{theorem-Carleman}.
\end{proof}

\section{Exponential decay}
\label{sect-exp}

\subsection{Decay properties of solitary waves}
\label{sect-waves}

In this section, we provide the proof of
Theorem~\ref{theorem-properties};
it will follow
from Lemmata~\ref{lemma-exp-decay}--\ref{lemma-exp-decay-assumption}.

Let us start with a simple technical inequality adapted from
\cite[Theorem 1]{MR880983}.
\begin{lemma}\label{lemma-bb}
Let $\lambda\in (-m;m)$ and $R\sb 0>0$.
Let $\varphi:[R\sb 0,+\infty)\to\R$ be
a monotonically increasing $C^1$ function such that
\begin{equation}\label{alpha}
\limsup\sb{r\to\infty}\varphi'(r)<\sqrt{m\sp 2-\lambda\sp 2}.
\end{equation}
Assume that $V:\,\varOmega\sb{R\sb 0}\mapsto \End(\C^N)$
satisfies the following condition:
\begin{equation}\label{vu-du}
\forall \varepsilon >0
\quad\exists R>0\quad
\forall u\in H\sp 1\sb{\mathrm{comp}}(\varOmega\sb R,\C^N)
\quad
\mbox{implies that}
\quad
\|V u\|
\leq \varepsilon \|u\|_{H^1}.
\end{equation}
Then there are constants $c,\,R$
such that
\begin{equation}\label{e-f-c}
\|e\sp{\varphi}u\|\sb{H\sp 1}\leq c\|e\sp{\varphi}(D\sb m+V-\lambda)u\|,
\ \quad
\forall
u\in L\sp 2(\varOmega\sb R,\C^N)\cap
H\sp 1\sb{0,\mathrm{loc}}(\varOmega\sb R,\C^N).
\end{equation}
\end{lemma}

\begin{remark}\label{rem:expdecay}
Since
$\norm{D\sb m u}=\norm{(-\Delta^2+m^2)^{1/2}u}\ge m\norm{u}$,
the assumption \eqref{vu-du}
is weaker than $\norm{V(x)}\sb{\End(\C^N)}\to 0$ as $\abs{x}\to\infty$.
\end{remark}

\begin{proof}
Without loss of generality,
we may assume that $R\sb 0=1$. The general case
follows by the same ideas and can be recovered by rescaling.

We deduce from Lemma~\ref{Lem:LemmaBG3} that for any $v\in H\sp
1\sb{\mathrm{comp}}(\R\sp n,\C^N)$,
\begin{equation}\label{bg-smth-1}
\Re
\langle
(D\sb m-\jj \bm\alpha\cdot\bm\nabla\varphi+\lambda)v,
(D\sb m\sp\varphi-\lambda)v\rangle
=\norm{\bm\nabla v}\sp 2+\langle v,[m\sp 2-\lambda\sp 2-(\bm\nabla\varphi)\sp
2]v\rangle,
\end{equation}
where
$
D\sb m\sp\varphi
=e\sp\varphi\circ D\sb m \circ e\sp{-\varphi}
=D\sb m+ \jj \bm\alpha\cdot\bm\nabla\varphi
$
was introduced in \eqref{def-dm-varphi}.
For any $\varepsilon>0$, we have
\begin{eqnarray}\label{bg-smth-2}
&&
\Re
\langle
(D\sb m-\jj \bm\alpha\cdot\bm\nabla\varphi+\lambda)v,
(D\sb m\sp\varphi-\lambda)v\rangle
\nonumber\\[1ex]
&&
\leq
\frac\varepsilon2
\|(D\sb m-\jj \bm\alpha\cdot\bm\nabla\varphi+\lambda)v\|\sp 2+
\frac{1}{2\varepsilon}\|(D\sb m\sp\varphi-\lambda)v\|\sp 2
\\[1ex]
&&
\leq
\frac{3\varepsilon}{2}
\|\bm\nabla v\|\sp 2
+\frac{3\varepsilon}{2}\|\abs{\bm\nabla\varphi}v\|\sp 2
+\frac{3\varepsilon}{2}\| (\beta m +\lambda)v\|\sp 2+
\frac{1}{2\varepsilon}\|(D\sb m\sp\varphi-\lambda)v\|\sp 2,
\nonumber
\end{eqnarray}
where we took into account that
\[
\norm{(D\sb m-\jj\bm\alpha\cdot\bm\nabla\varphi+\lambda)u}^2
\le
(\norm{D\sb 0 u}+\norm{(\bm\alpha\cdot\bm\nabla\varphi)u}
+\norm{(\beta m+\lambda)u})^2
\]
\[
\le
3
(\norm{D\sb 0 u}^2+\norm{(\bm\alpha\cdot\bm\nabla\varphi)u}^2
+\norm{(\beta m+\lambda)u}^2)
\]
and
$
\norm{\bm\alpha\cdot\bm\nabla\varphi}\sb{\End(\C^N)}
=\abs{\bm\nabla\varphi}
$.
We deduce from \eqref{bg-smth-1} and \eqref{bg-smth-2}
that
\[
\Big(1-\frac{3\varepsilon}{2}\Big) \norm{\bm\nabla v}\sp 2
+
\Big\langle v,
\Big[m\sp 2-\lambda\sp 2
-\big(1+\frac{3\varepsilon}{2}\big)(\bm\nabla\varphi)\sp
2-\frac{3\varepsilon}{2}(\beta m+\lambda)\sp 2
\Big]v
\Big\rangle
%\]
%\[
\leq
\frac{1}{2\varepsilon}\|(D\sb m\sp\varphi-\lambda)v\|\sp 2.
\]
Thus, due to the assumption \eqref{alpha},
there exist $c>0$ and $R>0$
such that
\begin{equation}\label{h1-bound-noV}
\|v\|\sb{H\sp 1}\leq c\|(D\sb m\sp\varphi-\lambda)v\|,
\qquad \forall v\in H\sp 1\sb{\mathrm{comp}}(\varOmega\sb R,\C^N).
\end{equation}
Then, due to the assumption on $V$, there exist $c>0$ and
$R>0$ such that
\[
\|v\|\sb{H\sp 1}\leq c\|(D\sb m\sp\varphi+V-\lambda)v\|,
\qquad \forall v\in H\sp 1\sb{\mathrm{comp}}(\varOmega\sb R,\C^N).
\]
Substituting $e\sp\varphi u$
in place of $v$
and using the identity
$(D\sb m\sp\varphi+V)(e\sp\varphi u)=e\sp\varphi(D\sb m+V)u$,
we conclude that
there exist $c>0$ and $R>0$
such that
\begin{equation}\label{h1-bound}
\|e\sp{\varphi}u\|\sb{H\sp 1}\leq c\|e\sp{\varphi}(D\sb m+V-\lambda)u\|,
\qquad \forall u\in H\sp 1\sb{\mathrm{comp}}(\varOmega\sb R,\C^N).
\end{equation}

Let us extend \eqref{h1-bound}
to functions $u\in H\sp 1\sb 0(\varOmega\sb{R},\C^N)$
which are no longer compactly supported.

First, we consider the case when $\varphi(r)$ is bounded.
Let
\[
\eta\in C^\infty\sb{\mathrm{comp}}([-2,2]),
\qquad
0\le\eta\le 1,
\qquad
\eta\at{[-1,1]}\equiv 1,
\]
and define
$\eta\sb j(x)=\eta(x/j)$.
Let $L\sp 2(\varOmega\sb R,\C^N)\cap
H\sp 1\sb{\mathrm{loc}}(\varOmega\sb R,\C^N)$
with
$\supp u\subset\varOmega\sb{R}$,
and define
\[
w\sb j=\eta\sb j u\in H\sp 1\sb{\mathrm{comp}}(\varOmega\sb{R},\C^N).
\]
Using the identity
$(D\sb m-\lambda)w\sb j=\eta\sb j(D\sb m-\lambda) u
-\jj(\alpha\nabla \eta\sb j)u$,
one has
\begin{equation}\label{dct0}
\norm{
e\sp\varphi w\sb j}
\le
c\norm{e\sp\varphi\eta\sb j(D\sb m+V-\lambda)u}
+
c\norm{
e\sp\varphi(\alpha\nabla \eta\sb j)u}.
\end{equation}
The second term in the right-hand side tends to
zero as $j\to\infty$.
Indeed,
according to our assumptions, $u$ is in $L\sp 2$ and $\varphi$
is bounded,
hence $e\sp\varphi u\in L\sp 2$ and
\[
\lim\sb{j\to\infty}
\norm{\chi\sb{[j,2j]}(\abs{x})e\sp\varphi  u}\sb{L\sp 2}=0,
\]
while
\[
\norm{\nabla\eta\sb{j}}\sb{L^\infty}
\le
\norm{\nabla\eta\sb j}\sb{L^\infty}
\norm{\chi\sb{[-2j,2j]}}\sb{L^\infty}
=\frac{1}{j}\norm{\nabla\eta}\sb{L^\infty}
\norm{\chi\sb{[-2j,2j]}}\sb{L^\infty}
\]
is bounded.
Applying the dominated convergence theorem
to the first term in the right-hand side of \eqref{dct0}
and the Fatou lemma to the left-hand side,
we conclude that
\[
\norm{
e\sp\varphi u}
\le c\norm{e\sp\varphi (D\sb m+V-\lambda)u},
\qquad \forall u\in H\sp 1\sb 0(\varOmega\sb{R},\C^N).
\]

\bigskip

Unbounded $\varphi$
are considered precisely as in~\cite[Theorem 3]{MR880983},
which in turn follows the approach of~\cite{MR685028}.
We already presented the details in the proof
of Theorem~\ref{theorem-Carleman}.
\end{proof}

\begin{lemma}\label{lemma-exp-decay}
Let $n\ge 1$.
If $n=1$, assume that
$f$ is measurable and bounded on bounded sets,
with $\lim_0f=0$;
if instead $n>1$, assume that there exists $C>0$ such that
\[
|f(s)|\leq C\abs{s}^k,
\qquad \forall s\in\R,
\qquad
\begin{cases}
0<k\le 1/(n-2), \quad n\geq 3,
\\[1ex]
k>0,\quad n=2.
\end{cases}
\]
Let $\omega\in(-m,m)$,
and assume that
$\phi\sb\omega\in H^1(\R\sp n,\C^N)$
is a solution to \eqref{nld-stationary}.
Then for any
$\mu<\sqrt{m\sp 2-\omega\sp 2}$
one has
\[
e\sp{\mu\langle r\rangle}\phi\sb\omega
\in H\sp1(\R\sp n,\C^N).
\]
\end{lemma}

\begin{proof}
For the sake of completeness,
we choose to provide a proof of the above lemma.
One can use the Combes--Thomas method, see~\cite{MR1785381}.
For consistency, let us use Lemma~\ref{lemma-bb}.
The solitary wave profile $\phi\sb\omega$ satisfies
\begin{equation}\label{dvk-nlds}
\omega\phi\sb\omega
=
D\sb m\phi\sb\omega
-f(\phi\sb\omega\sp\ast\beta\phi\sb\omega)\beta\phi\sb\omega.
\end{equation}
The assumptions on $f$ (combined with H\"older and Sobolev
inequalities)
are to ensure that $V=f(\phi\sb\omega\sp\ast\beta\phi\sb\omega)\beta$
satisfies the assumption \eqref{vu-du}
of Lemma~\ref{lemma-bb}.

Pick
$\mu\in(0,\sqrt{m\sp 2-\omega\sp 2})$.
Let $\eta\in C^\infty(\R\sb{+})$, $\supp\eta\in(1,+\infty)$,
$0\le\eta\le 1$ and
$\eta\at{[2,+\infty)}\equiv 1$,
and define
$\eta\sb j(x)=\eta(x/j)$.
By Lemma~\ref{lemma-bb} with
$\phi(r)=\mu r$, $u=\eta\sb j \phi\sb\omega$, and
$V=f(\phi\sb\omega\sp\ast\beta\phi\sb\omega)\beta$, we have
$
 \|e\sp{\mu r}\eta\sb j\phi\sb\omega\|\sb{H\sp 1}\leq
c\|e\sp{\mu r}\bm\alpha\cdot\nabla\eta\sb j\phi\sb\omega\|,
$
which provides the conclusion.
\end{proof}

In the previous proof, the assumptions on $\phi_\omega$ and $f$
were made to ensure that $V=f(\phi\sb\omega\sp\ast\beta\phi\sb\omega)\beta$
for large $x$
is small
compared to the Dirac operator.
If for instance we consider (as in e.g. \cite{MR847126})
\begin{equation}\label{as-above}
\phi\sb\omega
=\begin{bmatrix}
  v(r)\bm{n}\sb 1\\
  u(r)\,(\bm{e}\sb r\cdot\bm\sigma)\,\bm{n}\sb 1
 \end{bmatrix},
\end{equation}
where $u$ and $v$ are real-valued and
$
 \bm{n}\sb 1=
[1,0,\dots,0]\sp{t}
\in\C^{N/2},
$
$
\bm{e}\sb r=\frac{x}{r}\in\R\sp n,
$
$
\bm\sigma=(\sigma\sb\jmath)\sb{1\le\jmath\le n},
$
then we have the following statement by repeating the proof
of the one-dimensional case away from the origin.
\begin{lemma}\label{lemma-exp-decay-soler}
Let $n\ge 1$.
Assume that $f$ is measurable and bounded on bounded sets, with
$\lim_0 f=0$.
Let $\phi\sb\omega$ be a solution to \eqref{nld-stationary}
of the form \eqref{as-above},
with $u(\abs{x})$ and $v(\abs{x})$ in $H^1(\R^n,\C^{N/2})$
considered as functions of $x\in\R^n$.
Then for any
$\mu<\sqrt{m\sp 2-\omega\sp 2}$
one has
$
e\sp{\mu \langle r\rangle}\phi\sb\omega
\in H\sp1(\R\sp n,\C^N).
$
\end{lemma}

As the matter of fact,
the exponential decay holds for any solitary wave
regardless of whether it is of the form \eqref{as-above},
as long as one knows
that it becomes small at infinity:

\begin{lemma}\label{lemma-exp-decay-assumption}
Let $n\ge 1$. Assume that $f$ is measurable and bounded on bounded sets,
with $\lim_0 f=0$.
Suppose that $\omega\in(-m,m)$ and that
\[
\phi\sb\omega\in
L^2(\R^n,\C^N),
\qquad
\lim\sb{R\to\infty}\norm{\phi\sb\omega}_{L^\infty(\varOmega_R,\C^N)}=0
\]
is a solution to \eqref{nld-stationary}.
Then for any
$\mu<\sqrt{m\sp 2-\omega\sp 2}$
one has
$
e\sp{\mu \langle r\rangle}\phi\sb\omega
\in H\sp1(\R\sp n,\C^N).
$
\end{lemma}

\begin{proof}
The proof reduces to proving that
$V=f(\phi\sb\omega\sp\ast\beta\phi\sb\omega)\beta$ is small
at infinity compared to the Dirac operator.
This, in turn, follows from the assumptions
$\lim_0 f=0$
and
$\lim\sb{R\to\infty}\norm{\phi\sb\omega}_{L^\infty(\varOmega_R,\C^N)}=0$

\end{proof}

\subsection{Decay of embedded eigenstates before the embedded threshold}

\label{sect-embed}

In this section, we prove Theorem~\ref{theorem-embed}~\itref{theorem-embed-iib};
its proof follows from
Lemma~\ref{lemma-embed-exp-decay} below.

Let $n\ge 1$.
Let $V:\,\R\sp n\to\End(\C^N)$ be measurable,
and assume that
for any $\epsilon>0$ there exists $R>0$
such that
\begin{equation}\label{v-small}
\norm{\langle r\rangle V v}\leq \epsilon \norm{v}_{H^1},
\qquad \forall v\in H\sp 1\sb{\mathrm{comp}}(\varOmega\sb R,\C^{N}).
\end{equation}

\begin{remark}
Note that
the operator $\eubL(\omega)$
in \eqref{def-ll}
is such that \eqref{v-small} is satisfied,
due to the exponential spatial decay of $\phi\sb\omega$
(cf. Lemmata~\ref{lemma-exp-decay}--\ref{lemma-exp-decay-assumption}).
\end{remark}

\begin{lemma}\label{lemma-embed-exp-decay}
Let $J\in\End(\C^N)$ be skew-adjoint and invertible,
such that $J^2=-I\sb{\C^N}$, $[J,D\sb m]=0$.
Let $\lambda\in\sigma\sb{\mathrm{p}}(JL(\omega))$ satisfy
$\lambda\in\jj \R$,
$m-\abs{\omega}<\abs{\lambda}<m+\abs{\omega}$.
Then, for any $\mu<\sqrt{m^2-(|\lambda|-|\omega|)^2}$,
an eigenfunction $\zeta$ corresponding to $\lambda$
satisfies
$
e\sp{\mu\langle r\rangle}\zeta
\in H\sp1(\R\sp n,\C^{N}).
$
\end{lemma}

\begin{proof}
Let
$M,\,\calN,\,\rho\ge 1$, $\nu>0$.
Assume that $\varphi\in C\sp 2(\R\sb{+})$ satisfies
(cf. Definition~\ref{definition-c} and \eqref{alpha})
\begin{enumerate}
\item
$0<\varphi'\le \sqrt{m^2-(|\lambda|-|\omega|)^2}$, $\ \forall r\ge\rho$;
\item
$\limsup\sb{r\to\infty}\varphi'(r)<\sqrt{m\sp 2-(\abs{\lambda}-\abs{\omega})\sp 2}$;
\item
$r\abs{\varphi''}\le M\varphi'$, $\ \forall r\ge\rho$ ;
\item
$\Lambda\sb{-}\sp 2-m\sp 2+\varphi'\sp 2+2r\varphi'\varphi''\ge \nu$,
$\ \forall r\ge\rho$.
\end{enumerate}
Let
$\Pi\sp\pm=\frac 1 2(1\mp\jj J)$ as in \eqref{def-pi-pm}.
Assume that $ \jj \lambda$ is of the same sign as $\omega$
(the other case is treated verbatim by exchanging
the treatment of $\zeta\sp\pm$);
then
$\omega+\jj \lambda$ is outside the spectral gap of $D\sb m$
while
$\omega-\jj \lambda$ is inside.
So,
$\abs{\omega+\jj\lambda}>m$,
%% and \eqref{euw-u-1} yields
and Lemma~\ref{lemma-asdf}~\itref{lemma-asdf-i} yields
\[
\norm{\bm\nabla(e^\varphi u)}^2
+
\norm{\gamma e^\varphi u}^2
\le\norm{\mu e^\varphi(D\sb m-(\omega+\jj\lambda))u}^2,
\qquad \forall u\in H\sp 1\sb 0(\varOmega\sb{R_0},\C^N).
\]
Since
$\abs{\omega-\jj\lambda}<m$
and
$\abs{\varphi'}<\sqrt{m^2-(\omega-\jj\lambda)^2}$,
we apply \eqref{h1-bound}:
\[
\|e\sp{\varphi}u\|\sb{H\sp 1}\leq c\|e\sp{\varphi}
(D\sb m-(\omega-\jj\lambda))u\|,
\qquad \forall u\in H\sp 1\sb{\mathrm{comp}}(\varOmega\sb R,\C^N).
\]
Summing up the above inequalities
(applied to $\Pi\sp{-}u$ and $\Pi\sp{+}u$, respectively,
with $\Pi\sp\pm$ from \eqref{def-pi-pm}),
we have:
\begin{eqnarray}
\norm{\bm\nabla(e^\varphi\Pi\sp{-}u)}^2
+\norm{\gamma e^\varphi\Pi\sp{-}u}^2
+\|e^\varphi\Pi\sp{+}u\|\sb{H\sp 1}^2
%\nonumber
%\\
%&&
%\quad
\leq
\norm{\mu e^\varphi(D\sb m-\omega-\jj\lambda)\Pi\sp{-}u}^2
+c^2\|e\sp\varphi(D\sb m-\omega+\jj\lambda)\Pi\sp{+}u\|^2
\nonumber
\\[2ex]
\leq
\norm{\mu e^\varphi\Pi\sp{-}(D\sb m-\omega-J^{-1}\lambda)u}^2
+
c^2\|e\sp\varphi\Pi\sp{+}(D\sb m-\omega-J^{-1}\lambda)u\|^2,
\nonumber
\end{eqnarray}
valid for all
$u\in H\sp 1\sb{\mathrm{comp}}(\varOmega\sb R,\C^{N})$,
which we rewrite as
\[
\|e^\varphi u\|\sb{H\sp 1}
\leq
\langle c\rangle
\|e^\varphi(\Pi\sp{+}+\mu\Pi\sp{-})
\big((D\sb m-\omega)-J^{-1}\lambda\big)u\|.
\]
Increasing $R$ if necessary,
we use \eqref{v-small} to arrive at
\[
\|e^\varphi u\|\sb{H\sp 1}
\leq 2 \langle c\rangle\|e^\varphi(\Pi\sp{+}+\mu\Pi\sp{-})
\big(J(D\sb m-\omega+V)-\lambda\big)u\|,
\ \quad
\forall u\in H\sp 1\sb{\mathrm{comp}}(\varOmega\sb R,\C^{N}).
\]
The extension of the above estimate
to $u\in H\sp 1\sb0(\varOmega\sb R,\C^{N})$ can be done as in the proof of Lemma~\ref{lemma-asdf}
up to {\bf Step 2}.
We then conclude as in the proof of Lemma~\ref{lemma-exp-decay} by applying this estimate to a smooth localization of an eigenvector to the region $\varOmega\sb R$.
\end{proof}

\section{Bifurcations of eigenvalues from the essential spectrum}
\label{sect-b}

In this section we prove Theorem~\ref{theorem-b}:
the case $\abs{\lambda\sb{0}}<m+\abs{\omega\sb{0}}$
follows from
Lemma~\ref{lemma-between-thresholds} below, while
Theorem~\ref{theorem-b}
for the case $\abs{\lambda\sb{0}}>m+\abs{\omega\sb{0}}$
follows from
Lemma~\ref{lemma-beyond-thresholds}.
We start with some basic results on the
limiting absorption principle.

\subsection{Results on the limiting absorption principle
for the Dirac operator}

The limiting absorption principle for the Dirac
operator was studied
in~\cite{MR0320547,MR1162140,MR1241704,MR1743451,MR2718928}.
We reformulate it here
since we need a version
valid for a spectral parameter from a non-compact set:

\begin{lemma}[Limiting absorption principle for the Dirac operator]
\label{lemma-lap-dirac}
Let $s>1/2$, $\delta>0$, and $m\ge 0$.
There exists $C_0=C_0(s,\delta,m)<\infty$
(locally bounded in $s$, $\delta$, and $m$)
such that
for all
$z\in\C\setminus((-\infty,-m]\cup[m,+\infty))$
with $\abs{z\sp 2-m\sp 2}\ge\delta$
one has
\begin{equation}
\label{agmon-yamada}
\norm{u}\sb{L\sp2\sb{-s}}
\le C_0(s,\delta,m)\norm{(D\sb m-z)u}\sb{L\sp 2\sb s},
\qquad \forall
u\in L\sp 2(\R\sp n,\C^N).
\end{equation}
Let $s>1/2$, $m\ge 0$, and
$z\in\C\setminus((-\infty,-m]\cup[m,+\infty))$.
There exists $C_1=C_1(s,z,m)<\infty$
(locally bounded in $s$, $z$, and $m$)
such that
\begin{equation}
\label{agmon-yamada-2}
\norm{u}\sb{H\sp1\sb{-s}}
\le C_1(s,z,m)\norm{(D\sb m-z)u}\sb{L\sp 2\sb s},
\qquad \forall u\in L\sp 2(\R\sp n,\C^N).
\end{equation}
\end{lemma}

\begin{proof}
By~\cite[Remark 2 in Appendix A]{MR0397194},
for any $s>1/2$ and $\delta>0$
there is $C_{s,\delta}<\infty$ such that
for all $v\in H^2(\R^n)$
and $\zeta\in\C$,
$\abs{\zeta}\ge\delta$,
one has
\begin{equation}\label{agmon-remark-2}
(\abs{\zeta}+1)^{\frac{1-k}{2}}
\norm{v}_{H^k_{-s}}
\le
C_{s,\delta}
\norm{(-\Delta-\zeta)v}\sb{L^2_s},
\qquad
0\le k\le 2.
\end{equation}
We will apply this inequality
to vector-valued functions $v\in H^2(\R^n,\C^N)$.
%% Without loss of generality,
%% one can assume that the right-hand side
%% is finite; then it is enough to assume that
%% $v\in L^2(\R^n)$.

Let $u\in L\sp 2(\R^n,\C^N)$
and
$z\in\C\setminus((-\infty,-m]\cup[m,\infty))$.
Without loss of generality,
we can assume that
$(D_m-z)u\in L^2_s(\R^n,\C^N)$
(or else there is nothing to prove);
it then follows that $u\in H^1(\R^n,\C^N)$
and $v:=(D_m+z)^{-1}u\in H^2(\R^n,\C^N)$.
One has:
\begin{equation}\label{rhs-rhs}
\norm{u}_{H^{k-1}_{-s}}
=
\norm{(D_m+z)v}_{H^{k-1}_{-s}}
\le
C(s)\norm{v}_{H^{k}_{-s}}
+
(m+\abs{z})\norm{v}_{H^{k-1}_{-s}},
\end{equation}
where $C(s)<\infty$ depends on $s$ only.
Applying \eqref{agmon-remark-2}
with $\zeta=z^2-m^2$,
$\abs{\zeta}\ge\delta>0$,
to the right-hand side of
\eqref{rhs-rhs}, we have:
\[
\norm{u}_{H^{k-1}_{-s}}
\le
\left(
C(s)(\abs{\zeta}+1)^{\frac{k-1}{2}}
%%\norm{(-\Delta-\zeta)u}\sb{L^2_s}
+
(m+\abs{z})(\abs{\zeta}+1)^{\frac{k-2}{2}}\right)
C_{s,\delta}\norm{(-\Delta-\zeta)u}\sb{L^2_s},
\]
for $1\le k\le 2$.
Taking $k=1$ and $k=2$
and using the identity
$(-\Delta-\zeta)v=(D_m-z)u$,
we arrive at the inequalities
\eqref{agmon-yamada} and \eqref{agmon-yamada-2}.
\end{proof}

We also need the following Hardy-type inequality,
along the lines of \cite[Appendix B]{MR0397194}.

\begin{lemma}\label{Lem:BerthierGeorgescuLogarithm}
For any $s>1/2$ and $z\in\R\setminus[-m,m]$,
there is $C_2=C_2(s,z,m)<\infty$
(locally bounded in $s$ and $z$)
such that
\begin{equation}\label{bg-th2}
\norm{u}\sb{H^1_s}
\le C_2\norm{(D\sb m-z)u}\sb{L^2_{s+1}},
\qquad \forall u\in L^2 (\R^n,\C^N)\cap H^1_{\mathrm{loc}}(\R^n,\C^N).
\end{equation}
\end{lemma}

\begin{proof}
While the result by Berthier and
Georgescu is stated in the three-dimensional case,
a careful look at the
proof shows that it is independent of the dimension,
being based on~\cite[Appendix B]{MR0397194},
which treats any dimension.

%% \ac{\sout{For the sake of completeness, we provide the following proof
%% for $s>0$.}}
%% \ac{changes below}
For the sake of completeness, we provide the proof.
Fix $s>1/2$.
We use Theorem~\ref{theorem-Carleman}
with $\varphi(r)=s\log\langle r\rangle$;
this gives some $R\sb s<\infty$ and $C(s,z)<\infty$
such that for any $R\ge R\sb s$
and for any $v\in H\sp 1(\R\sp n,\C^N)$,
$\supp v\subset\varOmega\sb{R}$,
which satisfies
\[
\langle r \rangle^{s+1}(D\sb m-z)v
=\langle r \rangle e\sp\varphi (D\sb m-z)v
\in L\sp 2(\R\sp n,\C^N)
\]
one has
$\langle r\rangle^s v
=e\sp{\varphi}v\in L\sp 2(\R\sp n,\C^N)$ and moreover
\begin{equation}\label{same-bound}
\norm{
\langle r \rangle^s v}
\le
C(s,z)\norm{
\langle r \rangle^{s+1}
(D\sb m-z)v}.
\end{equation}
%% \ac{
%% The following is not needed since below we give a detailed proof:
%% \sout{
%% Using \eqref{agmon-yamada-2},
%% and taking $C(s,z)$ larger if necessary,
%% we can assume that the inequality \eqref{same-bound} also holds
%% if $u$ is compactly supported.}
%% }
%%
Let $\eta\in C^\infty(\R^n)$
be such that
$\supp\eta\subset\varOmega\sb R$
and
$\eta\at{\varOmega\sb{R+1}}=1$
for some $R>R_s$.
%%
%% \ac{\sout{Then for any $u\in H^1_{\mathrm{loc}}(\R^n,\C^N)$
%% such that $(D\sb m-z)u \in
%% L^2_{s+1}(\R^n,\C^N)$,}}
%%
%% \ac{\sout{
%% one has $(D\sb m-z)\eta u
%% = \eta (D\sb m-z)u-\jj(\bm\alpha\cdot\nabla\eta) u\in
%% L^2_{s+1}(\R^n,\C^N)$,
%% }}
%%
Let $u\in L^2(\R^n,\C^N)\cap H^1\sb{\mathrm{loc}}(\R^n,\C^N)$.
Without loss of generality,
we may assume that
$\norm{(D_m-z)u}_{L^2\sb{s+1}}$ is finite
(or else there is nothing to prove);
then we conclude that $u\in H^1(\R^n,\C^N)$.
Applying \eqref{same-bound} to $\eta u\in H^1(\R^n,\C^N)$,
$\supp\eta u\subset\varOmega_R$,
one has:
\begin{equation}\label{eta-u-1}
\norm{\eta u}\sb{H^1_s}
\le C(s,z)\norm{(D\sb m-z) \eta u}\sb{L^2_{s+1}}.
\end{equation}
At the same time,
%% \ac{\sout{
%% for some $c>0$ (which depends on $s$, $z$, and $R$)}}
since
$\supp(1-\eta)u\subset\mathbb{B}^n_{R+1}$,
we deduce that
\begin{eqnarray}\label{eta-u-2}
\norm{(1-\eta) u}\sb{H^1_s}
\leq
\langle R+1\rangle^{2s}
\norm{(1-\eta) u}\sb{H^1_{-s}}
%\nonumber
%\\
\le
\langle R+1\rangle^{2s}
C_1(s,z,m)\norm{(D\sb m-z) (1-\eta) u}\sb{L^2_{s}};
\end{eqnarray}
in the last inequality,
we applied \eqref{agmon-yamada-2}.
Using
\eqref{eta-u-1} and \eqref{eta-u-2}, we have:
\begin{eqnarray}\nonumber
\norm{ u}\sb{H^1_s}
&\le&
\norm{\eta u}\sb{H^1_{s}}
+
\norm{(1-\eta)u}\sb{H^1_{s}}
\\[1ex]
&\le&
C(s,z)\norm{(D\sb m-z)\eta u}\sb{L^2_{s+1}}
+
\langle R+1\rangle^{2s}
C_1\norm{(D\sb m-z) (1-\eta) u}\sb{L^2_{s}}
\nonumber
\\[1ex]
&\le&
(C(s,z)+\langle R+1\rangle^{2s}C_1)
\Big(
\norm{(D\sb m-z)u}\sb{L^2_{s+1}}
+\norm{(\bm\alpha\cdot\nabla\eta) u}\sb{L^2_{s+1}}
\Big).
\nonumber
\end{eqnarray}
Due to the compact support of
$\nabla\eta$,
the inequality \eqref{agmon-yamada-2}
shows that the second term in the brackets
in the right-hand side
is dominated by the first term,
which concludes the proof.
\end{proof}

We now consider an extension of this result for values
outside the real line. Such extension is false in full
generality.\footnote{For instance, in the one-dimensional case,
$\int\sb\R |x|^{s-2}\abs{F(x)}^2\,dx$
can not be bounded from above by
$\frac{4}{(s-1)^2} \int\sb\R |x|^s\abs{F'(x)-2F(x)}^2\,dx$
if
no restriction on $F$ is imposed such as $F$ has support away from $-\infty$.
}
The one we obtain is due to a simple commutator estimate.

\begin{lemma}\label{lemma-hardy-complex-gap}
Assume that $\lambda\in\C\setminus\big((-\infty,-m]\cup[m,+\infty)\big)$.
If $s\in\R$ satisfies
$\abs{s}<\dist(\lambda,\sigma(D_m))$,
then
%%\ac{\sout{$\norm{u}\sb{H^1\sb s}$}!!!}
\[
\norm{u}\sb{L^2\sb s}
\le
\frac{1}{\dist\big(\lambda,\sigma(D_m)\big)-\abs{s}}
\norm{(D\sb m-\lambda)u}\sb{L\sp 2\sb{s}},
\qquad \forall u\in L^2\sb{s-1}(\R^n,\C^N).
%%AC Note L^2_{s-1} in the previous line!
\]
\end{lemma}

\begin{proof}
First, we notice that
for any $u\in C^\infty\sb{\mathrm{comp}}(\R^n,\C^N)$,
one has
\begin{equation}\label{commutator-bound}
\norm{[\langle r\rangle^s,D_m]u}
=
\norm{(D_0\langle r\rangle^s)u}
=
\Norm{\frac{\bm\alpha\cdot x}{r}s\langle r\rangle^{s-1}u}
\le \abs{s}\norm{u}\sb{L^2_{s-1}};
\end{equation}
note that
$\norm{\bm\alpha\cdot x}\sb{\End(\C^N)}
=\norm{(\bm\alpha\cdot x)(\bm\alpha\cdot x)}\sb{\End(\C^N)}^{1/2}
=\norm{x^2}\sb{\End(\C^N)}^{1/2}=r$.
Using \eqref{commutator-bound}, we compute for such $u$:
\begin{eqnarray}
\norm{\langle r\rangle^s(D_m-\lambda)u}
\ge
\norm{(D_m-\lambda)\big(\langle r\rangle^s u\big)}
-
\norm{[D_m,\langle r\rangle^s]u}
%\nonumber
%\\
\ge
\norm{(D_m-\lambda)\big(\langle r\rangle^s u\big)}
-
\abs{s}
\norm{u}\sb{L^2_{s-1}}.
\nonumber
\end{eqnarray}
The above inequality shows that
if
$u\in L^2_{s-1}(\R^n,\C^N)$
and
$(D_m-\lambda)u\in L^2_s(\R^n,\C^N)$
(if the latter inclusion were not satisfied then
there would be nothing to prove),
then
$(D_m-\lambda)(\langle r\rangle^s u)\in L^2(\R^n,\C^N)$.
%% due to the assumptions on $\lambda$, one then has
%% $\langle r\rangle^s u\in H^1(\R^n,\C^N)$.
Since $D_m$ is self-adjoint,
one has
$\norm{(D_m-\lambda)^{-1}}=1/\dist(\lambda,\sigma(D_m))$;
therefore,
$\langle r\rangle^s u\in H^1(\R^n,\C^N)$,
and
\begin{eqnarray}
\norm{(D_m-\lambda)u}\sb{L^2_s}
%% &\ge&
%% \norm{(D_m-\lambda)\big(\langle x\rangle^s u\big)}
%% -
%% \norm{[D_m,\langle x\rangle^s]u}
%% \nonumber
%% \\
\ge
\dist(\lambda,\sigma(D_m))\norm{\langle r\rangle^s u}
-\abs{s}\norm{u}\sb{L^2_{s-1}}
%\nonumber
%\\
\ge
\big(
\dist(\lambda,\sigma(D_m))-\abs{s}
\big)
\norm{u}\sb{L^2_s}.
\nonumber
\end{eqnarray}
This concludes the proof.
\end{proof}

\subsection{Bifurcation of eigenvalues before the embedded threshold}
\label{sect-vb-1}

Let us consider bifurcations
from the interval of the imaginary axis
between the embedded thresholds,
proving Theorem~\ref{theorem-b} for the case $\abs{\lambda\sb{0}}<m+\abs{\omega\sb{0}}$.
We will formulate our results
for the operator family
$
L(\omega)=D\sb m-\omega+V(\omega),
$
with $V(\omega)\in L^\infty(\R^n,\C^N)$
zero order and hermitian.
Note that
$L(\omega)$ is not necessarily
a linearization at a solitary wave of the nonlinear Dirac equation.

We start with the following elementary result.

\begin{lemma}[Krein's theorem]
\label{lemma-php2}
Let $J\in\End(\C^N)$ be skew-adjoint and invertible
and let $L$ be self-adjoint on $L\sp 2(\R\sp n,\C^N)$.
If $\lambda\in\sigma\sb{\mathrm{p}}(JL)\setminus \jj \R$
and $\zeta$ is a corresponding eigenvector,
then
\[
\langle\zeta,L\zeta\rangle=0,
\qquad
\langle\zeta,J^{-1}\zeta\rangle=0.
\]
\end{lemma}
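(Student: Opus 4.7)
The plan is to extract two scalar identities from the eigenvalue equation $\eubJ\eubL\bmupzeta=\lambda\bmupzeta$, exploiting that self-adjointness of $\eubL$ forces $\langle\bmupzeta,\eubL\bmupzeta\rangle\in\R$ while skew-adjointness of $\eubJ$ forces $\langle w,\eubJ w\rangle\in \jj\R$ for every $w\in L^2(\R^n,\C^{2N})$.

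For the first identity, I pair the eigenvalue equation against $\eubL\bmupzeta$, obtaining the relation $\langle\eubL\bmupzeta,\eubJ\eubL\bmupzeta\rangle=\lambda\langle\eubL\bmupzeta,\bmupzeta\rangle$. The left-hand side has the form $\langle w,\eubJ w\rangle$ with $w=\eubL\bmupzeta$ and is therefore purely imaginary, while the right-hand side is $\lambda A$ (up to a harmless complex conjugate on $\lambda$ depending on the Hermitian convention) with $A:=\langle\bmupzeta,\eubL\bmupzeta\rangle\in\R$. Since by hypothesis $\lambda\notin \jj\R$, the only way for $\lambda A$ to be purely imaginary is $A=0$, giving $\langle\bmupzeta,\eubL\bmupzeta\rangle=0$.

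For the second identity, I apply $\eubJ$ from the left to the eigenvalue equation. The concrete matrix $\eubJ$ defined in \eqref{upalpha-upbeta} satisfies $\eubJ^2=-I_{2N}$, so this produces $-\eubL\bmupzeta=\lambda\eubJ\bmupzeta$. Pairing with $\bmupzeta$ gives $-\langle\bmupzeta,\eubL\bmupzeta\rangle=\lambda\langle\bmupzeta,\eubJ\bmupzeta\rangle$. The left-hand side vanishes by the first identity, and since $\lambda\notin \jj\R$ implies in particular $\lambda\ne 0$, we conclude $\langle\bmupzeta,\eubJ\bmupzeta\rangle=0$.

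There is essentially no obstacle in the proof beyond careful bookkeeping of the real versus imaginary character of the various Hermitian forms. The only subtlety worth flagging is that the second identity genuinely uses $\eubJ^2=-I_{2N}$: for a completely arbitrary invertible skew-adjoint $\eubJ$, the same argument applied to $\eubL\bmupzeta=\lambda\eubJ^{-1}\bmupzeta$ only yields $\langle\bmupzeta,\eubJ^{-1}\bmupzeta\rangle=0$, which coincides with $\langle\bmupzeta,\eubJ\bmupzeta\rangle=0$ exactly because $\eubJ^{-1}=-\eubJ$ in the paper's setting.
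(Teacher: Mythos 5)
Your proof is correct and rests on the same key observation as the paper's: self-adjointness of $\eubL$ makes $\langle\bmupzeta,\eubL\bmupzeta\rangle$ real, skew-adjointness of $\eubJ$ (or $\eubJ^{-1}$) makes the corresponding form purely imaginary, and $\Re\lambda\neq 0$ then forces both to vanish. The paper extracts everything in a single step by rewriting the eigenvalue equation as $\eubL\bmupzeta=\lambda\eubJ^{-1}\bmupzeta$ and pairing directly with $\bmupzeta$, which yields $\langle\bmupzeta,\eubL\bmupzeta\rangle=\lambda\langle\bmupzeta,\eubJ^{-1}\bmupzeta\rangle$; the real/imaginary mismatch then kills both factors simultaneously. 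You instead first pair the eigenvalue equation against $\eubL\bmupzeta$ to kill $\langle\bmupzeta,\eubL\bmupzeta\rangle$, then feed this back in to kill $\langle\bmupzeta,\eubJ\bmupzeta\rangle$; the two-step version is marginally longer but logically identical. You are also right to flag the subtlety that the argument as stated naturally produces $\langle\bmupzeta,\eubJ^{-1}\bmupzeta\rangle=0$, and that identifying this with $\langle\bmupzeta,\eubJ\bmupzeta\rangle=0$ requires $\eubJ^{-1}=-\eubJ$ (i.e.\ $\eubJ^{2}=-I$), which the concrete $\eubJ$ of \eqref{upalpha-upbeta} satisfies; the paper's proof silently elides this point, so your remark is a genuine (if minor) improvement in rigor.
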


\begin{proof}
One has
$JL\zeta=\lambda\zeta$,
$L\zeta=\lambda J\sp{-1}\zeta$,
hence
\begin{equation}\label{php}
\langle\zeta,L\zeta\rangle
=\lambda\langle\zeta,J\sp{-1}\zeta\rangle.
\end{equation}
Since $\langle\zeta,L\zeta\rangle\in\R$
and $\langle\zeta,J\sp{-1}\zeta\rangle\in\jj \R$,
the condition $\Re\lambda\ne 0$
implies that both sides in \eqref{php} are equal to zero.
\end{proof}

%% \begin{remark}
%% If an eigenvector $\zeta$
%% corresponding to $\lambda\in\sigma\sb{\mathrm{p}}(JL)$
%% satisfies
%% $\langle\zeta,J^{-1}\zeta\rangle=0$,
%% we will say that $\lambda$ has zero Krein signature.
%% According to Lemma~\ref{lemma-php2},
%% all eigenvalues of $JL$
%% with nonzero real part have zero Krein signature.
%% \end{remark}

\begin{lemma}\label{lemma-between-thresholds}
Let $n\ge 1$.
Let $J\in\End(\C^N)$ be skew-adjoint and invertible,
such that $J^2=-I\sb{\C^N}$, $[J,D\sb m]=0$.
Let $\omega\sb j\in(-m,m)$,
$\omega\sb j
\mathop{\longrightarrow}\limits\sb{j\to\infty}
\omega\sb{0}\in[-m,m]$.
Let
\[
L(\omega)=D\sb m-\omega+V(\omega),
\qquad
\omega\in[-m,m],
\]
with
$V(\omega)\in L^\infty(\R\sp n,\End(\C^N))$
a zero-order operator-valued function
which is hermitian for each $\omega\in[-m,m]$,
and assume that there is $\varepsilon>0$ such that
\begin{equation}\label{lec}
\begin{cases}
\norm{\langle r\rangle\sp{1+\varepsilon}
V(\omega\sb{0})}\sb{L^\infty(\R\sp
n,\End(\C^N))}<\infty,
\\[2ex]
\lim\sb{j\to\infty}
\,\norm{\langle r\rangle\sp{1+\varepsilon}\left(
V(\omega\sb j)-V(\omega\sb{0})\right)}\sb{L^\infty(\R\sp
n,\End(\C^N))}=0.
\end{cases}
\end{equation}
Let $\lambda\sb j\in\sigma\sb{\mathrm{d}}(JL(\omega\sb j))$
be a sequence such that
\begin{equation}\label{in-between}
\lambda\sb j
\mathop{\longrightarrow}\limits\sb{j\to\infty}
\lambda\sb{0}\in\jj \R,
\qquad
\abs{\lambda\sb{0}}<m+\abs{\omega\sb{0}},
\end{equation}
with
\begin{equation}\label{re-lambda-nonzero}
\Re\lambda\sb j\ne 0,\quad\forall j\in\N.
\end{equation}
If $\omega\sb{0}= \pm m$,
additionally assume that
\begin{equation}\label{ends-do-not-meet}
\lambda\sb{0}\ne 0.
\end{equation}
Then
\begin{equation}\label{lambda-is-embedded}
\lambda\sb{0}\in\sigma\sb{\mathrm{p}}(JL(\omega\sb{0})).
\end{equation}
%%AC the following is wrong! only one of the two projections converges strongly.
%% Moreover,
%% there is a subsequence of eigenfunctions
%% $(\zeta\sb j)\sb{j\in\N}$ corresponding to eigenvalues
%% $\lambda\sb j\in\sigma\sb{\mathrm{p}}(JL(\omega\sb j))$
%% which converges in $L\sp 2$
%% to an eigenfunction $\zeta\sb{0}$
%% corresponding to
%% $\lambda\sb{0}\in\sigma\sb{\mathrm{p}}(JL(\omega\sb{0}))$.
\end{lemma}

\begin{proof}
Let $(\zeta\sb j)\sb{j\in\N}$ be a sequence of unit eigenvectors
associated
with eigenvalues $\lambda\sb j$,
so that
$JL(\omega\sb j)\zeta\sb j=\lambda\sb j\zeta\sb j$.
It follows that
\begin{equation}\label{d-zeta-v-zeta}
(D\sb m-\omega\sb j+\lambda\sb j J)\zeta\sb j
=
-V(\omega\sb j)\zeta\sb j.
\end{equation}
Let
$\Pi\sp\pm$
(cf. \eqref{def-pi-pm})
be the projectors onto eigenspaces of $J$
corresponding to $\pm\jj\in\sigma(J)$, respectively.
We denote
$\zeta\sb j\sp\pm=\Pi\sp\pm\zeta\sb j$.
By \eqref{re-lambda-nonzero},
applying Lemma~\ref{lemma-php2},
we conclude that
$
0
=\langle\zeta\sb j,J\zeta\sb j\rangle
=\jj \norm{\zeta\sb j\sp{+}}\sp 2-\jj \norm{\zeta\sb j\sp{-}}\sp 2,
$
$
j\in\N,
$
while
$
1=\norm{\zeta\sb j}\sp 2
=\norm{\zeta\sb j\sp{+}}\sp 2+\norm{\zeta\sb j\sp{-}}\sp 2,
$
$
j\in\N;
$
we conclude that
$\norm{\zeta\sb j\sp\pm}=1/\sqrt{2}$.
Applying $\Pi\sp\pm$ to \eqref{d-zeta-v-zeta},
we have:
\begin{equation}\label{d-zeta-v-zeta-p}
\big(D\sb m-\omega\sb j+\jj\lambda\sb j\big)\zeta\sb j\sp{+}
=-\Pi\sp{+} V(\omega\sb j)\zeta\sb j,
\end{equation}
\begin{equation}\label{d-zeta-v-zeta-m}
\big(D\sb m-\omega\sb j-\jj\lambda\sb j\big)\zeta\sb j\sp{-}
=-\Pi\sp{-} V(\omega\sb j)\zeta\sb j.
\end{equation}
Above, we took into account that
$[J,D\sb m]=0$, hence the projections $\Pi\sp\pm$
also commute with $D\sb m$.

If the condition \eqref{in-between}
(as well as \eqref{ends-do-not-meet}
when $\omega\sb{0}=\pm m$)
is satisfied,
then either
$\omega\sb 0+\jj\lambda\sb 0\in(-m,m)$
or
$\omega\sb 0-\jj\lambda\sb 0\in(-m,m)$.
Both cases are considered similarly;
for definiteness, we will assume that
\[
\omega\sb 0-\jj\lambda\sb 0\in(-m,m).
\]
In this case, without loss of generality,
we may also assume that
\begin{equation}\label{inside-the-gap}
\omega\sb j-\jj\lambda\sb j\in(-m,m),
\qquad \forall j\in\N.
\end{equation}
By \eqref{lec},
the right-hand side of \eqref{d-zeta-v-zeta-p}
belongs to $L^2_s$, $s\le 1+\varepsilon$.
Due to \eqref{inside-the-gap},
we may apply Lemma~\ref{lemma-hardy-complex-gap}
to \eqref{d-zeta-v-zeta-p},
concluding that there is $s\in(0,1)$
such that
$\norm{\zeta\sb j\sp{+}}\sb{H^1_s}$ are uniformly bounded
when $j$ is large enough
(so that
$\jj \lambda\sb j+\omega\sb j$
are sufficiently close to $\jj \lambda\sb{0}+\omega\sb{0}$).
Thus
the sequence
$(\zeta\sb j\sp +)\sb{j\in\N}$ is precompact in
$L\sp 2(\R\sp n,\C^N)$,
and we can choose a subsequence which converges
to some vector
$\zeta\sb{0}\sp{+}\in L^2(\R^n,\C^{N})$
of norm
$\norm{\zeta\sb{0}\sp{+}}
=\lim\sb{j\to\infty}\norm{\zeta\sb j\sp +}=1/\sqrt{2}$.
At the same time, any subsequence
of the bounded sequence $(\zeta\sb j\sp -)\sb{j\in\N}$
also contains a weakly convergent subsequence.
We conclude that there is a subsequence
$(\zeta\sb j)\sb{j\in\N}$
which has a nonzero weak limit;
this limit is necessarily an eigenvector
of $JL(\omega\sb{0})$
corresponding to $\lambda\sb{0}$.
\end{proof}

\subsection{Bifurcation of eigenvalues beyond the embedded thresholds}

We now
turn to the proof of the limiting absorption principle for the linearized
operator in a neighborhood of any purely imaginary point
beyond the embedded thresholds $\pm\jj(m+\abs{\omega})$,
proving Theorem~\ref{theorem-b}
for the case $\abs{\lambda\sb{0}}>m+\abs{\omega\sb{0}}$.
In that respect we closely follow the strategy initiated
in a work by Jensen and Kato \cite{MR544248}
which is related to the approach by \cite{MR0397194}.
%% We will prove Theorem~\ref{theorem-b}
%% for the case $\abs{\lambda\sb{0}}>m+\abs{\omega\sb{0}}$
%% following the strategy by Jensen and Kato,
%% which starts
We start with the following  identity:
\[
J(D_m-\omega +V(\omega))-\lambda
=\left( J(D_m-\omega)-\lambda\right)\left(
1+ \big(J(D_m-\omega)-\lambda\big)^{-1}JV(\omega)\right),
\]
$\lambda\in\C\setminus\sigma(J(D_m-\omega))$.
After diagonalizing $J$ (which commutes with $D_m$),
Lemma~\ref{lemma-lap-dirac}
provides the limiting absorption principle for
$J(D_m-\omega)-\lambda$;
hence, our task reduces to proving that the operator
$
A(\lambda,\omega)=1+\big(J(D_m-\omega)-\lambda\big)^{-1}JV(\omega),
$
\[
A(\lambda,\omega):\;
L^2_{-s}(\R^n,\C^N)\to L^2_{-s}(\R^n,\C^N),
\qquad
s>1/2,
\]
has an inverse which is bounded uniformly in $\lambda$, with $\Re \lambda>0$,
in the vicinity of any
particular point
\[
(\omega\sb 0,\lambda\sb 0),
\qquad
\omega\sb 0\in[-m,m],
\qquad
\lambda_0\in\jj\R,
\qquad
\abs{\lambda_0}>\jj(m+|\omega\sb 0|)
\]
where we know that $A$ is invertible,
and also proving that
$A(\omega\sb 0,\lambda\sb 0)$ is not invertible if and only if
$\lambda\sb 0$
is an eigenvalue of
$J(D_m-\omega\sb 0+V(\omega\sb 0))$.

\begin{proposition}\label{prop-point}
Let $V:\,\R\sp n\to\End(\C^N)$ be measurable, hermitian-valued,
and assume that there are $\varepsilon>0$ and $C<\infty$ such that
\[
\norm{\langle
r\rangle\sp{1+\varepsilon}V}\sb{L^\infty(\R\sp n,\End(\C^N))}
<\infty.
\]
Let $\omega\in[-m,m]$,
$\lambda\in\jj\R$, $\abs{\lambda}>m+\abs{\omega}$,
$s\in(1/2,(1+\varepsilon)/2)$.
Then either the operator
%% \ac{\sout{
%% $
%% A=1+\big(J(D_m-\omega)-\lambda\big)^{-1}JV,
%% \qquad
%% A
%% :\;
%% L^{2}_{-s}(\R^n,\C^N)\to L^{2}_{-s}(\R^n,\C^N)
%% $
%% }}
%%
%% \ac{
%% we probably need the following;
%% this is what we use in
%% Proposition~\ref{prop:LAPLinearization}!!
%% Fortunately, it seems that this is also what we prove:
%% }
\[
A=1+\big(J(D_m-\omega)-\lambda\big)^{-1}JV,
\qquad
A
:\;
H^{1/2}_{-s}(\R^n,\C^N)\to H^{1/2}_{-s}(\R^n,\C^N)
\]
is invertible, or there is a nonzero function $F\in\ker A$, $F\in L^2(\R^n,\C^N)$.
\end{proposition}

\begin{proof}
By \eqref{lec-0}, for any
$s<(1+\varepsilon)/2$, the map $V:\,u\mapsto V u$
is bounded
from $H^{1/2}\sb{-s}$ to $H^{-1/2}\sb{s}$.
Due to the limiting absorption principle
for the Dirac operator
(cf. Lemma~\ref{lemma-lap-dirac}),
the resolvent
\[
R_0(\lambda):=\left(J(D_m-\omega)-\lambda\right)^{-1},
\qquad
\lambda\in\C,
\quad
\Re\lambda>0,
\]
can be extended onto the closure of the right half-plane,
excluding arbitrarily small open neighborhoods of $\pm\jj(m\pm\omega)$
(we keep the same notation $R_0$ for this extension),
so that for any $s>1/2$ one has
%%\marginpar{\tiny We should change the space here ?}
%%AC No; here, we do not need to worry about
%%AC uniformity in $|\lambda|$
\[
R_0(\lambda)\in B(L^{2}_{s}(\R^n,\C^N),\,H^{1}_{-s}(\R^n,\C^N)),
\ \quad
\lambda\in\C,
\ \Re\lambda\ge 0,
\ \lambda\ne\pm\jj(m\pm\omega).
\]
Due to the decay of $V$,
the operator $A-1$ is compact
in $L^2_{-s}$
if $s\in\big(1/2,(1+\varepsilon)/2\big)$.
Hence, by the Fredholm alternative,
$A$ is invertible
in $L^{2}_{-s}$
if and only if its null space
\[
\mathfrak{M}_s:=\ker A\at{L^{2}_{-s}}
\]
is trivial.

Let us introduce the Fredholm operator
$B=1+VJ\big(J(D_m-\omega)+\lambda\big)^{-1}$
on $H^{-1/2}_{s}(\R^n,\C^N)$,
with
$s\in\big(1/2,(1+\varepsilon)/2\big)$.
We denote its null space
by
\[
\mathfrak{N}_s:=\ker B\at{L^{2}_{s}}.
\]
Being compact perturbations of the identity,
both $A$ and $B$ are Fredholm operators of index zero.
As in \cite[Section 3]{MR544248},
the finite-dimensional spaces
$\mathfrak{M}_s$ and $\mathfrak{N}_s$
are respectively non-decreasing and non-increasing
as $s$ grows.
Since $A\at{H^{1/2}_{-s}}$ and $B\at{H^{-1/2}_{s}}$
are mutually adjoint,
\[
0=\mathop{\rm ind} A\at{H^{1/2}_{-s}}
=\dim\ker A\at{L^{2}_{-s}}\!-\dim\mathop{\rm coker} A\at{L^{2}_{-s}}
=\dim\ker A\at{L^{2}_{-s}}\!-\dim\ker B\at{L^{2}_{s}},
\]
is a non-decreasing function of
$s\in\big(1/2,(1+\varepsilon)/2\big)$,
hence
$\dim\mathfrak{M}_s=\dim\mathfrak{N}_s$
does not depend on $s\in\big(1/2,(1+\varepsilon)/2\big)$.
We conclude that the spaces
$\mathfrak{M}_s$, $\mathfrak{N}_s$ do not depend on $s\in\big(1/2,(1+\varepsilon)/2\big)$;
we will denote these spaces by
$\mathfrak{M}$ and $\mathfrak{N}$,
respectfully.

One key fact is the following lemma.

\begin{lemma}\label{lemma-k-v}
Let $s>1/2$,
$\omega\in[-m,m]$,
$\lambda\in\jj\R$,
$\abs{\lambda}>m+\abs{\omega}$.
Then
\[
\left(J(D_m-\omega)-\lambda\right)R_0(\lambda)v=v,
\qquad \forall v \in H^{-1/2}_{s}(\R^n,\C^N).
\]
\end{lemma}

\begin{proof}
This is an adaptation of \cite[Lemma 2.4]{MR544248}.
Fix $v \in H^{-1/2}_{s}(\R^n,\C^N)$.
We note that for $\lambda\in\jj\R$
one has
\[
(J(D_m-\omega)-\lambda)\sp\ast
=
-(D_m-\omega)J-\bar\lambda
=-(J(D_m-\omega)-\lambda),
\qquad \forall \lambda\in\jj\R,
\]
where we took into account that $[J,D_m]=0$.
Therefore, for any $\varphi\in C^\infty\sb{\mathrm{comp}}(\R^n,\C^N)$,
since $R_0(\lambda)v\in H^{1/2}_{-s}$, we have:
\[
\langle
(J(D_m-\omega)-\lambda)R_0(\lambda)v,
\varphi
\rangle
=
-\langle
R_0(\lambda)v,
(J(D_m-\omega)-\lambda)
\varphi
\rangle.
\]
Using $R_0(\lambda)\sp\ast=-R_0(\lambda)$
for $\lambda\in\jj\R$,
we then write
\[
\big\langle
(J(D_m-\omega)-\lambda)R_0(\lambda)v,
\varphi
\big\rangle
=
\big\langle
v,
R_0(\lambda)(J(D_m-\omega)-\lambda)
\varphi
\big\rangle
=
\langle
v,\varphi\rangle,
\]
finishing the proof.
\end{proof}

We deduce from Lemma~\ref{lemma-k-v}
that any $u\in\mathfrak{M}$ satisfies
\[
\big(J(D_m-\omega+V)-\lambda\big)u=0.
\]
In the following, we argue that
\begin{equation}\label{inclusion}
\mathfrak{M}\subset L^2(\R^n,\C^{N}),
\end{equation}
which would conclude the proof of
Proposition~\ref{prop-point}.

The inclusion \eqref{inclusion}
is proved using the following three complementary results.

\begin{lemma}\label{lemma-f-d-f}
Let $s>1/2$,
$\epsilon>0$.
Let $\Lambda\in\R$, $\abs{\Lambda}>m$.
If $f\in H^{-1/2}_{s}(\R^n,\C^N)$, then
\[
 \lim_{\epsilon\to 0+}\Im\left\langle f,
(D_m-\Lambda-\jj\epsilon)^{-1}f\right\rangle
=
\frac{\pi
  \abs{\Lambda}
}{\sqrt{\Lambda^2-m^2}}\int_{\xi^2+m^2=\Lambda^2} |\tau
P^+\hat{f}(\xi)|^2\,d\sigma(\xi),
\]
where $\tau$ denotes the trace operator on Sobolev space $H^\kappa(\R^n,\C^N)$
of order $\kappa>1/2$, $\hat{\cdot}$ is the unitary Fourier transform on
tempered distributions,
$d\sigma$ is the induced measure on the surface  $\xi^2+m^2=\Lambda^2$,
and
$
P^\pm=\frac 1 2\Big(1\pm\frac{d_m(\xi)}{\sqrt{\xi^2+m^2}}\Big)
$
are the projectors onto positive and negative eigenvalues,
$\pm\sqrt{\xi^2+m^2}$,
of
the symbol $d_m(\xi)=\bm\alpha\cdot\xi+\beta m$.
\end{lemma}
\begin{proof}
First we notice that for $f\in H^{-1/2}_{s}(\R^n,\C^N)$,
one has:
\begin{eqnarray}
\left\langle f, (D_m-\Lambda-\jj\epsilon)^{-1}f\right\rangle
&=&
\int_{\R^n}
\left(
\bm\alpha\cdot \xi + \beta m -\Lambda-\jj\epsilon\right)^{-1}
|\hat{f}(\xi)|^2\,d\xi
\nonumber
\\[1ex]
&=&
\int_{\R^n}
\frac{|P^{+}(\xi)\hat{f}(\xi)|^2\,d\xi}
{\sqrt{\xi^2+m^2}-\Lambda-\jj\epsilon}
+\int_{\R^n}
\frac{|P^{-}(\xi)\hat{f}(\xi)|^2\,d\xi}
{-\sqrt{\xi^2+m^2}-\Lambda-\jj\epsilon}
,
\nonumber
\end{eqnarray}
and hence
\begin{align}
\Im \left\langle f, (D_m-\Lambda-\jj\epsilon)^{-1}f\right\rangle
%\nonumber
%\\
%&
=\int_{\R^n} \frac{\epsilon
|P^{+}(\xi)\hat{f}(\xi)|^2\,d\xi}
{(\sqrt{\xi^2+m^2}-\Lambda)^2+\epsilon^2}
+
\int_{\R^n} \frac{\epsilon |P^{-}(\xi)\hat{f}(\xi)|^2\,d\xi}
{(\sqrt{\xi^2+m^2}+\Lambda)^2+\epsilon^2}.
\label{two-integrals}
\end{align}
Let us assume that $\Lambda>m$.
In the limit $\epsilon\to 0+$,
the second integral in the right-hand side of \eqref{two-integrals}
tends to $0$.
The first integral can be written as
\begin{align*}
&\int_{\R^n} \frac{2\epsilon}{(\sqrt{\xi^2+m^2}-\Lambda)^2+
\epsilon^2}|P^{+}(\xi)\hat{f}(\xi)|^2\,d\xi
\\[1ex]
&=\int_{\mathbb{S}^{n-1}}\int_{\R^+}
\frac{2\epsilon}{(\sqrt{r^2+m^2}-\Lambda)^2+
\epsilon^2}|P^{+}(r\omega)\hat{f}(r\omega)|^2\,r^{n-1}\,dr\,d\omega
\\[1ex]
&=\int_{\mathbb{S}^{n-1}}\int_{m}^\infty
\Abs{P^{+}(\omega\sqrt{r^2-m^2})\hat{f}(\omega\sqrt{r^2-m^2})}^2
\frac{2\epsilon
\,(r^2-m^2)^{(n-2)/2}r\,dr\,d\omega
}{(r-\Lambda)^2+\epsilon^2}
,
\end{align*}
which converges, in the limit $\epsilon\to 0+$, to
\begin{align*}
&
2\pi
(\Lambda^2-m^2)^{\frac{n-2}{2}}
\Lambda \int\limits_{\mathbb{S}^{n-1}}
\left|P^{+}(\omega\sqrt{\Lambda^2-m^2})\hat{f}(\omega\sqrt{\Lambda^2-m^2})\right|^2\,
d\omega
%\\
%&
%\qquad
=\frac{2\pi\Lambda}{\sqrt{\Lambda^2-m^2}}\int\limits_{\xi^2+m^2=\Lambda^2}
|\tau(P^{+}(\xi)\hat{f}(\xi)|^2\,d\sigma.
\end{align*}
This proves the required identity in the case $\Lambda>m$.
The case $\Lambda<-m$ is considered similarly.
\end{proof}

The second result we need is directly inspired by
\cite[Proof of Lemma 2.4]{MR2094265}.
\begin{lemma}\label{lemma-j-d-f}
Assume that $V$
is hermitian
and there are $\varepsilon>0$ and $C<\infty$ such that
\[
\norm{\langle
r\rangle\sp{1+\varepsilon}V}\sb{L^\infty(\R\sp n,\End(\C^N))}
<\infty.
\]
Let
$s\in\big(1/2,(1+\varepsilon)/2\big)$,
$\omega \in [-m,m]$.
Let $\lambda\in\jj \R$,
$\abs{\lambda}>m+\abs{\omega}$.

Then for any $F\in H^{1/2}_{-s}(\R^n,\C^N)$ such that
\[
J(D_m-\omega +V)F=\lambda F,
\]
the function
$G:=(D_m-\omega+J\lambda)F=-VF$
satisfies
\[
\int_{\sqrt{\xi^2+m^2}=
\abs{\omega\mp\jj\lambda}
}
\Abs{\Pi\sp\pm
\tau\big(P^{+}(\xi)\hat{G}(\xi)\big)}^2\,
d\sigma=0,
\]
where
$\Pi\sp\pm=\frac 1 2(1\mp\jj J)$
is the projector onto eigenspaces
of $J$ corresponding to $\pm\jj\in\sigma(J)$
(cf. \eqref{def-pi-pm}).
\end{lemma}

\begin{proof}
We assume that
$\lambda=\jj\Lambda$,
with $\Lambda>m+\abs{\omega}$.
(The case $\lambda=-\jj\Lambda$ is considered verbatim.)

Applying the spectral projectors
$\Pi\sp\pm$
to the relation
\[
G=(D_m-\omega+J\lambda)F
\]
and denoting
$
F^\pm=\Pi\sp\pm F$,
$
G^\pm=\Pi\sp\pm G$,
we have:
\[
G^\pm
=(D_m-\omega\pm\jj\lambda)F^\pm
=(D_m-\omega\mp\Lambda)F^\pm
=-\Pi\sp{+}V F.
\]
One has:
\begin{eqnarray}
\lim_{\epsilon \to 0+}
\langle G^\pm,(D_m-\omega\mp\Lambda-\jj\epsilon
)^{-1}G^\pm\rangle
%\\
%&
%\quad
&=&
-\lim_{\epsilon \to 0+}
\big\langle G^\pm,(D_m-\omega\mp\Lambda-\jj\epsilon
)^{-1}\Pi\sp\pm V F
\big\rangle
\nonumber
\\[1ex]
&=&
-\big\langle F^\pm, \Pi\sp\pm V F\big\rangle
=
-\langle F^\pm, V F\rangle.
\nonumber
\end{eqnarray}
Summing up
the expressions corresponding to $\pm$ signs,
taking the imaginary part,
and applying Lemma~\ref{lemma-f-d-f}
leads to
\begin{align*}
0
=-\Im
\langle F,VF\rangle
&=
-\Im\big(
\langle F\sp{+},VF\rangle+\langle F\sp{-},VF\rangle
\big)
%\\
%&
=
\sum\sb\pm
\lim_{\epsilon \to 0+}\Im\langle G^\pm,(D_m-\omega\mp\Lambda-\jj\epsilon
)^{-1}G^\pm\rangle
\\
&=
\frac{\pi\abs{\omega+\Lambda}}{\sqrt{(\omega+\Lambda)^2-m^2}}
\int_{\sqrt{\xi^2+m^2}=\abs{\omega+\Lambda}}
\Abs{\Pi\sp{+}\tau\big(P^+(\xi)\hat{G}(\xi)\big)}^2\,d\sigma(\xi)
\\
&
\qquad
+
\frac{\pi\abs{\omega-\Lambda}}{\sqrt{(\omega-\Lambda)^2-m^2}}
\int_{\sqrt{\xi^2+m^2}=\abs{\omega-\Lambda}
}
\Abs{\Pi\sp{-}\tau\big(P^+(\xi)\hat{G}(\xi)\big)}^2\,d\sigma(\xi).
\end{align*}
In the very first equality,
we used our assumption that $V$ is hermitian.

Since both the coefficients and the integrals
in the right-hand side are positive,
the conclusion follows.
\end{proof}

The last step is needed to exclude non-square-integrable resonances.
It is directly inspired by~\cite[Theorem 2]{MR880983}.
\begin{lemma}\label{lemma-f-j-d}
Let $s>1/2$.
Let $\omega\in [-m,m]$.
Let $\lambda=\jj\Lambda\in\jj\R$
with $\Lambda>m+\abs{\omega}$.
If $F\in\mathscr{S}'(\R^n,\C^N)$ is such that
\[
[J(D_m-\omega)-\lambda]F\in L^{2}_{s}(\R^n,\C^N)
\]
and
if $(1\pm \jj J)P^+(\xi)\big(J(d_m(\xi)-\omega)-\lambda\big)\hat{F}$
vanish on the spheres
$\sqrt{\xi^2+m^2}=\Lambda\pm\omega$,
respectively,
then
%%\marginpar{\tiny We should change the space here ?}
%%AC No; here, we do not need to worry about
%%AC uniformity in $|\lambda|$
\begin{equation}\label{eq:HardyBerthierGeorgescu}
\|F\|_{s-1}\leq C \|[J(D_m-\omega)-\lambda]F\|_{s},
\end{equation}
for some constant $C<\infty$
depending on $s$, $\lambda$ and $\omega$ only.
\end{lemma}
\begin{proof}
The proof
of \cite[Theorem 2]{MR880983}
works with a straightforward adaptation of the key \cite[Lemma 5]{MR880983},
which is a consequence of \cite[Appendix B]{MR0397194}
which in turn is valid in any dimension;
the assumptions needed to apply it are in the assumption of the Lemma.
\end{proof}

\begin{remark}
In \cite{MR880983},
Berthier and Georgescu proved a result similar
to \eqref{eq:HardyBerthierGeorgescu}
under the $L^1_{\mathrm{loc}}$ assumption on the Fourier
transform of $F$. Such an assumption provides that $(1\pm \jj
J)P^+(\xi)\big(J(d_m(\xi)-\omega)-\lambda\big)\hat{F}$
vanish on the spheres
$\sqrt{\xi^2+m^2}=\Lambda\pm\omega$,
respectively.
\end{remark}

Lemmata~\ref{lemma-f-d-f}, \ref{lemma-j-d-f} and~\ref{lemma-f-j-d}
complete the proof of the inclusion
\eqref{inclusion}.
Proposition~\ref{prop-point} follows
from \eqref{inclusion} and Lemma~\ref{lemma-k-v}.
\end{proof}

\begin{proposition}\label{prop:LAPLinearization}
Let $V:\,\R^n\times[-m,m]\to\End(\C^N)$ be measurable,
and assume that there are
$C<\infty$ and $\varepsilon>0$ such that
\eqref{lec-0} is satisfied.

Let
$\omega\sb 0\in[-m,m]$ and let
\[
\lambda\sb 0\in\jj\R,
\qquad
\abs{\lambda\sb 0}>m+\abs{\omega\sb 0},
\qquad
\lambda\sb 0\not\in\sigma\sb{\mathrm{p}}\big(J(D_m-\omega+V(\omega))\big).
\]
Then for any $s\in\big(1/2,(1+\varepsilon)/2\big)$
there exist an open neighborhood
$I\subset[-m,m]$ of $\omega\sb 0$
($I$ is a one-sided neighborhood
of $\omega\sb 0$
if $\omega\sb 0=\pm m$)
and an open neighborhood $U\subset\C$ of $\lambda\sb 0$
such that
for $\omega\in I$
the resolvent
of $J(D_m-\omega+V(\omega))$ at $\lambda\in\overline{U}\setminus\jj\R$
extends to a continuous mapping
\[
\big(J(D_m-\omega+V(\omega))-\lambda\big)^{-1}:\;
H^{-1/2}_s(\R^n,\C^N)
\to
H^{1/2}_{-s}(\R^n,\C^N),
\]
which is bounded uniformly in $\lambda\in\overline{U}\setminus \jj\R$.
\end{proposition}

\begin{proof}
If
$
\lambda\sb 0\not\in\sigma\sb{\mathrm{p}}\big(J(D_m-\omega\sb 0+V(\omega\sb 0))\big),
$
then, by Proposition~\ref{prop-point},
the operator
\[
A(\omega,\lambda)
=1+\big(J(D_m-\omega)-\lambda\big)^{-1}JV(\omega),
\]
\[
A(\omega,\lambda):\;H^{1/2}\sb{-s}(\R^n,\C^N)\to H^{1/2}\sb{-s}(\R^n,\C^N)
\]
is invertible at $(\omega\sb 0,\lambda\sb 0)$.
By the limiting absorption principle
(Lemma~\ref{lemma-lap-dirac}),
for any $s>1/2$,
there is
an open neighborhood $I\subset[-m,m]$ of $\omega\sb 0$
($I$ is a one-sided neighborhood if $\omega\sb 0=\pm m$)
and
an open neighborhood $U\subset\C$ of $\lambda\sb 0$
such that
$R_0(\lambda)=(J(D_m-\omega)-\lambda)^{-1}$
remains continuous in the
$H^{-1/2}_{s}\to H^{1/2}_{-s}$ operator topology
for $\omega\in I$,
$\lambda\in U$, $\Re\lambda\ge 0$.
Similarly,
$V(\omega):\,H^{1/2}_{-s}\to H^{-1/2}_s$
remains continuous in the  corresponding operator topology
for $\omega\in I\cap[-m,m]$
(cf. \eqref{lec-0}).
By continuity in $\omega$ and $\lambda$,
the operator
$A(\omega,\lambda)$ is continuous in the $H^{1/2}_{-s}\to H^{1/2}_{-s}$ operator topology,
remaining invertible
for $(\omega,\lambda)$
in an open neighborhood of $(\omega\sb 0,\lambda\sb 0)$,
with $\Re\lambda\ge 0$.
\end{proof}

Now we can finish the proof of
Theorem~\ref{theorem-b}.

\begin{lemma}\label{lemma-beyond-thresholds}
Let $\big(\omega\sb j\big)\sb{j\in\N}$,
$\omega\sb j\in(-m,m)$,
$\omega\sb j\to\omega\sb{0}\in[-m,m]$.
Assume that
\[
\lambda\sb{0}\in\jj\R,
\qquad
\abs{\lambda\sb{0}}>m+\abs{\omega\sb{0}},
\qquad
\lambda\sb{0}\not\in
\sigma\sb{\mathrm{p}}\big(J(D_m-\omega\sb{0}+V(\omega\sb{0}))\big).
\]
Then there is no sequence
$\lambda\sb j\in
\sigma\sb{\mathrm{p}}\big(J(D_m-\omega_j+V(\omega\sb j))\big)$
such that
$\lambda\sb j\to\lambda\sb{0}\in\jj\R$.
\end{lemma}

\begin{proof}
We use the last part of the proof of Proposition~\ref{prop:LAPLinearization}
to argue by contradiction.
Indeed, let $\big(\omega\sb j\big)\sb{j\in\N}$, $\omega\sb j\in(-m,m)$,
$\omega\sb j\to\omega\sb 0\in[-m,m]$,
and assume that
$\lambda\sb j\in\sigma\sb{\mathrm{p}}\big(J(D_m-\omega\sb j+V(\omega\sb j))\big)$,
$\lambda\sb j\to\lambda\sb{0}\in\jj\R$, $\abs{\lambda\sb{0}}>m+\abs{\omega\sb{0}}$.
Fix $s,\,s'$
such that
\[
\frac 1 2<s<s'<\frac{1+\varepsilon}{2},
\]
with $\varepsilon>0$ from \eqref{lec-0}.
Since the operator
\[
\big(J(D_m-\omega_j)-\lambda_j\big)^{-1}JV(\omega_j)
\]
is bounded from $L_{-s'}^2$ to $H^{1/2}_{-s}$ uniformly in $j\in\N$,
while the latter embeds compactly into $L^2_{-s'}$,
we conclude that
any sequence of eigenvectors (normalized in $L^2_{-s'}$)
associated to
$\lambda_j\in\sigma\sb{\mathrm{p}}
\big(J(D_m-\omega_j+V(\omega_j))\big)$
is compact in $L^2_{-s'}$,
converging to a nonzero vector from $H^{1/2}_{-s}$,
leading to a contradiction with the operator
\[
A(\lambda\sb{0})=1+\big(J(D_m-\omega\sb{0})-\lambda\sb{0}\big)^{-1}JV(\omega\sb{0}):
\;H^{1/2}_{-s}(\R^n,\C^N)\to H^{1/2}_{-s}(\R^n,\C^N)
\]
being invertible
by Proposition~\ref{prop:LAPLinearization}.
\end{proof}

This finishes the proof of
Theorem~\ref{theorem-b}.

\section{Bifurcations from the essential spectrum of the free Dirac operator}
\label{sect-vb}

In this section, we prove Theorem~\ref{theorem-vb}.
The proof follows from
Lemma~\ref{lemma-free-dirac}.

Let us consider
families of eigenvalues
in the limit of small amplitude solitary waves,
which may be present in the spectrum
up to the border of existence
of solitary waves: $\omega\to\omega\sb{0}\in\{\pm m\}$.
This situation could be considered
as the bifurcation of eigenvalues from the continuous spectrum
of the free Dirac equation.

\begin{lemma}\label{lemma-free-dirac}
Let $V(\omega)\in L^\infty(\R^n,\End(\C^N))$, $\omega\in[-m,m]$,
and let $\big(\omega\sb j\big)\sb{j\in\N}$
be a sequence such that
$\omega\sb j\in(-m,m)$
and $\omega\sb j\to\omega\sb{0}=\pm m$.
Assume that there is $\varepsilon>0$ such that
\begin{equation}\label{lim-s-half}
\lim\sb{j\to\infty}
\norm{\langle r\rangle\sp{1+\varepsilon}V(\omega\sb j)}
\sb{L^\infty(\R\sp n,\End(\C^N))}=0.
\end{equation}
If there is a sequence
$\big(\lambda\sb j\big)\sb{j\in\N}$
such that
$\lambda\sb j\in\sigma\sb{\mathrm{p}}(JL(\omega\sb j))$,
then
the only accumulation points of
$\big(\lambda\sb j\big)\sb{j\in\N}$
in the extended complex plane
are $z=0$ and $z=\pm 2 m \jj $.
\end{lemma}

\begin{proof}
Let us consider the case when
$\Re\lambda\sb j\ne 0$ for infinitely many $j\in\N$.
We need to show that
for any $\delta>0$
the point spectrum of $JL(\omega)$
is contained inside an open set
\[
U\sb\delta:=
\mathbb{D}\sb\delta(-2 m \jj )
\cup\mathbb{D}\sb\delta(0)\cup\mathbb{D}\sb\delta(2 m \jj ),
\]
as long as $\omega$ is sufficiently close to $\omega\sb{0}$.
Above,
$\mathbb{D}\sb\delta(z)$ denotes an open disc of radius $\delta$
around $z\in\C$.

Fix $\delta>0$.
Let $\abs{\omega-\omega\sb{0}}<\delta$;
then
$\pm \jj (m\pm\omega)\in U\sb\delta$.
Since the eigenvalues of $ J$ are $\pm \jj $,
the operator $ J( D\sb m-\omega)$
can be represented
as the direct sum of operators
$ \jj ( D\sb m-\omega)$ and $-\jj ( D\sb m-\omega)$.
By Lemma~\ref{lemma-lap-dirac}, for any $s>1/2$
the following map is bounded
uniformly for $z\in \C\setminus(\jj \R\cup U\sb\delta)$:
\begin{equation}\label{lap-jl0}
\big( J( D\sb m-\omega)-z\big)\sp{-1}:\;
L\sp 2\sb{s}(\R\sp n,\C^N)\to L\sp 2\sb{-s}(\R\sp n,\C^N),
\qquad
z\in \C\setminus(\jj \R\cup U\sb\delta).
\end{equation}
For appropriate values of $z\in\C$,
the resolvent of $JL(\omega)$
is expressed as
\begin{equation}\label{res}
(JL(\omega)-z)\sp{-1}
=
\big( J( D\sb m-\omega)-z\big)\sp{-1}
\frac{1}{1+ J V
\big( J( D\sb m-\omega)-z\big)\sp{-1}}.
\end{equation}
Thus, the action
\begin{equation}\label{action-bounded}
(JL(\omega)-z)\sp{-1}\;:
L\sp 2\sb{s}(\R\sp n,\C^N)\to L\sp 2\sb{-s}(\R\sp n,\C^N)
\end{equation}
is bounded uniformly in $z\in \C\setminus(\jj \R\cup U\sb\delta)$
as long as the operator
\[
V(\omega):\;
L\sp 2\sb{-s}(\R\sp n,\C^N)\to L\sp 2\sb{s}(\R\sp n,\C^N)
\]
of multiplication by $ V(x,\omega)$
has a sufficiently small norm;
it is enough to have
\begin{equation}\label{vr-small}
\norm{ V}\sb{L\sp 2\sb{-s}\to L\sp 2\sb{s}}
\norm{\big( J( D\sb m-\omega)-z\big)\sp{-1}}\sb{L\sp 2\sb{s}
\to L\sp 2\sb{-s}}<1/2.
\end{equation}
We choose
$s\in(1,(1+\varepsilon)/2)$, with $\varepsilon$ from \eqref{lim-s-half}.
Due to
the bound on the action \eqref{lap-jl0},
the inequality \eqref{vr-small} with $\omega=\omega_j$
is satisfied for $j$ sufficiently large,
since
\[
\lim\sb{j\to\infty}
\norm{ V(\omega_j)}
\sb{L\sp 2\sb{-s}(\R\sp n,\C^N)\to L\sp 2\sb{s}(\R\sp n,\C^N)}
\le
\lim\sb{j\to\infty}
\norm{\langle r\rangle\sp{2s} V(\omega\sb j)}
\sb{L^\infty(\R\sp n,\End(\C^N))}=0
\]
by the assumption of the lemma.
Due to the boundedness of the action
of \eqref{action-bounded},
uniformly in
$z\in\C\setminus(\jj\R\cup U\sb\delta)$,
for $j\in\N$ sufficiently large,
we conclude that
for these large $j$
the point spectrum of $JL(\omega\sb j)$
is inside $U\sb\delta$.

\medskip

Now let us consider the case when
$\Re\lambda\sb j=0$ for infinitely many $j\in\N$.
Without loss of generality,
let us assume that $\omega\sb 0=m$.
Let $\Lambda\sb j\in\R$ and $\Lambda\sb 0\in\R$ be such that
\[
\lambda\sb j=\jj\Lambda\sb j
\quad
\forall j\in\N,
\qquad
\lambda\sb j\to \lambda\sb 0=\jj\Lambda\sb 0.
%% 0<\Lambda\sb j<2m
%% \quad
%% \forall j\in\N,
%% \qquad
%% 0<\Lambda\sb 0<2m.
\]
Let $L(\omega)=D_m-\omega+V(\omega)$,
and let
$\zeta\sb j$ be eigenvectors
corresponding to eigenvalues
$\lambda\sb j\in\sigma\sb{\mathrm{p}}(JL(\omega\sb j))$,
$\norm{\zeta\sb j}=1$ $\forall j\in\N$.
Applying the projections
$\Pi\sp\pm=\frac 1 2(1\mp\jj J)$
to the relation
$L(\omega\sb j)\zeta\sb j=-J\lambda\sb j\zeta\sb j
=-\jj J\Lambda\sb j\zeta\sb j$,
we have
\[
\begin{cases}
\Pi\sp{-}(D_m-\omega\sb j+\Lambda\sb j)\zeta\sb j
=-\Pi\sp{-}V(\omega\sb j)\zeta\sb j,
\\[1ex]
\Pi\sp{+}(D_m-\omega\sb j-\Lambda\sb j)\zeta\sb j
=-\Pi\sp{+}V(\omega\sb j)\zeta\sb j.
\end{cases}
\]
Due to $[J,D_m]=0$, the above relations take the form
\begin{equation}\label{t-r}
\begin{cases}
(D_m-\omega\sb j+\Lambda\sb j)
\Pi\sp{-}\zeta\sb j
=-\Pi\sp{-}V(\omega\sb j)\zeta\sb j,
\\[1ex]
(D_m-\omega\sb j-\Lambda\sb j)
\Pi\sp{+}\zeta\sb j
=-\Pi\sp{+}V(\omega\sb j)\zeta\sb j.
\end{cases}
\end{equation}
Assume that
\begin{equation}\label{lambda-zero-strange}
\Lambda\sb 0\not\in\{0,\pm 2m\}.
\end{equation}
Since
$\omega\sb j\to\omega\sb 0=m$
and
$\Lambda\sb j\to \Lambda\sb 0\not\in\{0,\pm 2m\}$,
without loss of generality,
we may assume that
either
\begin{equation}\label{either-or-1}
\omega\sb j-\Lambda\sb j\in(-m,m),
\quad
\forall j\in\N;
\qquad
\omega\sb j-\Lambda\sb j
\to
\omega\sb 0-\Lambda\sb 0\in(-m,m),
\end{equation}
or
\begin{equation}\label{either-or-2}
\omega\sb j-\Lambda\sb j\in\R\setminus[-m,m],
\quad
\forall j\in\N;
\qquad
\omega\sb j-\Lambda\sb j\to
\omega\sb 0-\Lambda\sb 0
\in
\R\setminus[-m,m].
\end{equation}
In the case \eqref{either-or-1},
the first relation from \eqref{t-r}
yields
\[
\Pi\sp{-}\zeta\sb j
=
-
(D_m-\omega\sb j+\Lambda\sb j)^{-1}
\Pi\sp{-}V(\omega\sb j)\zeta\sb j,
\]
and, due to the assumption
\eqref{lim-s-half} on $V(\omega\sb j)$,
$\Pi\sp{-}\zeta\sb j\to 0$ strongly in $H^1$.
In the case \eqref{either-or-2},
we apply Lemma~\ref{Lem:BerthierGeorgescuLogarithm},
arriving at
\[
\norm{\Pi\sp{-}\zeta\sb j}
\le
C\norm{(D_m-\omega\sb j+\Lambda\sb j)\Pi\sp{-}\zeta\sb j}\sb{L^2_1}
=
C
\norm{\Pi\sp{-}V(\omega\sb j)\zeta\sb j}\sb{L^2_1}
\le
C
\norm{\langle r\rangle V(\omega\sb j)\zeta\sb j}.
\]
The right-hand side goes to zero due to the assumption
\eqref{lim-s-half}.
Thus, in either case,
$\Pi\sp{-}\zeta\sb j\to 0$ in $L^2$.
Similarly,
$\Pi\sp{+}\zeta\sb j\to 0$ in $L^2$,
leading to a contradiction
to the assumption
$\norm{\zeta\sb j}=1$ $\forall j\in\N$.
This shows that
\eqref{lambda-zero-strange} can not be true.
\end{proof}

\newpage

\appendix

\section{Appendix: Unique continuation principle
for the Dirac operator}
\label{Sec:UniqueContinuation}

In this section, we provide a simple version of the unique continuation
principle  which is sufficient for the sake of our analysis.
The proof is an adaptation of \cite[Appendix]{MR880983}.

\begin{lemma}\label{lemma-unique-continuation}
Let $n\ge 1$.
Denote $D_0=-\jj\bm\alpha\cdot\bm\nabla$,
where $\alpha\sp j$, $1\le j\le n$, are the Dirac matrices.
Let $\nu\in C^\infty(\R^n)$
be spherically symmetric and
strictly monotonically increasing with $r$.
For $a\in\R^n$, denote
\[
\nu_a(x)=\nu(x-a),
\qquad x\in\R^n.
\]
Let $\Omega$ be an open connected set of $\R^n$
and let $V:\,\Omega\to\End(\C^N)$
be measurable.

Assume that there exists $\kappa\in(0,1)$
such that for any
$u\in H^1\sb{\mathrm{comp}}(\Omega,\C^N)$
and any $a\in\R^n\setminus\supp u$
there is a sequence $\tau_j\to\infty$
such that
\begin{equation}\label{assumpt}
\|e^{-\tau_j\nu_a}V u\|
\le\kappa\|e^{-\tau_j\nu_a}D\sb 0u\|,
\qquad \forall j\in\N.
\end{equation}
Assume that
$\psi\in H\sp 1\sb {\mathrm{loc}}(\Omega,\C^N)$
satisfies
\begin{equation}\label{assumpt-k}
\abs{(D\sb 0\psi)(x)}\leq \abs{(V\psi)(x)}
\qquad
\mbox{for $x$ almost everywhere in $\Omega$},
\end{equation}
and that $\psi\equiv 0$
in a non-empty open set $\Omega_0\subset\Omega$.
Then there exists an open set
$\Omega_1\subset \Omega$ such that
$\Omega_0\subsetneqq \Omega_1$,
with $V\psi\equiv 0$ and $D_0\psi\equiv 0$
almost everywhere in $\Omega_1$.
\end{lemma}

\begin{proof}
The proof below, considered as standard,
is given for completeness but boils
down for instance to~\cite[Lemma 1]{MR576454}.
If $\psi\equiv 0$ in $\Omega$, then there is nothing to prove.
Let us assume that $\psi$ is not identically zero
in $\Omega$
and that
$\Omega_0:=\Omega\setminus\supp\psi$
is not empty.
Then there is an open ball of size $R\in(0,1)$
located strictly inside $\Omega_0$
and touching the boundary
of $\supp\psi$
at a single point, which we denote $x\sb\ast$.
Shifting the coordinates, we assume that this ball
is centered at the origin; we now have:
\begin{equation}\label{single-point}
\mathbb{B}^n_R\cap\supp\psi=\varnothing,
\qquad
\p\mathbb{B}^n_R\cap\supp\psi=\{x\sb\ast\}\subset\Omega.
\end{equation}
Let
$\mathbb{B}^n_r(x\sb\ast)\subset\Omega$
be an open ball of radius $r\in(0,R)$
centered at $x\sb\ast$.
Let $\eta\in C^\infty_{\mathrm{comp}}(\mathbb{B}^n_r(x\sb\ast))$,
$\eta\equiv 1$ in $\mathbb{B}^n_{r/2}(x\sb\ast)$.
Due to \eqref{single-point},
\[
R<\inf\{\abs{x}\sothat
x\in\supp\psi\setminus\mathbb{B}^n_{r/2}(x\sb\ast)\}\le\infty;
\]
see Figure~\ref{fig-ucp}.
Therefore,
there is a finite value $R_1>0$
such that
\begin{equation}\label{def-r1}
R<R_1<\inf\{\abs{x}\sothat
x\in\supp\psi\setminus\mathbb{B}^n_{r/2}(x\sb\ast)\}.
\end{equation}
Since $\mathbb{B}^n_R$ is strictly inside $\Omega$,
we may take $R_1$ smaller if necessary
so that $\mathbb{B}^n_{R_1}\subset\Omega$.

\begin{figure}[htbp]
\begin{center}
\setlength{\unitlength}{1pt}
\begin{picture}(80,80)(0,-30)

%\font\gnuplot=cmr10 at 10pt
%\gnuplot

\put(1,-8){$R$}
\put(0,0){\vector(1,0){20}}
\put(40,0){$\supp\psi$}
\put(15,-30){$\psi\at{\Omega_0}\equiv 0$}
\put(-7,2){$0$}
\put(45,30){$\supp\nabla\eta$}
\put(45,27){\vector(-1,-1){19.5}}
\put(0,0){\circle*{3}}
\put(20,0){\circle*{3}}

\put(0,0){\circle{40}}
\put(20,0){\circle{13}}
\put(20,0){\circle{24}}

\linethickness{1pt}
\qbezier(20,0)(20,20)(60,15)
\qbezier(20,0)(30,-20)(70,-10)
\qbezier(90,10)(90,20)(60,15)
\qbezier(90,10)(90,-8)(70,-10)

\end{picture}
\caption{\footnotesize
Distance from the origin to $\supp\psi\cap\supp\bm\nabla\eta$
is larger than $R$.
}
\label{fig-ucp}
\end{center}
\end{figure}
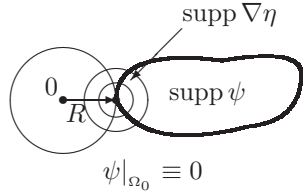

Applying \eqref{assumpt} to $u=\eta\psi$ (note that $0\not\in \supp u$) and then using \eqref{assumpt-k},
one has:
\begin{align*}
\|e^{-\tau_j \nu}V(\eta\psi)\|
\le
\kappa
\|e^{-\tau_j \nu}D\sb 0(\eta\psi)\|
&
\leq
\kappa\|e^{-\tau_j \nu}\eta D\sb
0\psi\|+
\kappa
\|e^{-\tau_j \nu}(\bm\alpha\cdot\bm\nabla\eta)\,\psi\|
\\[1ex]
&
\leq
\kappa\|e^{-\tau_j \nu}\eta V \psi\|
+\kappa\|e^{-\tau_j \nu}(\bm\alpha\cdot\bm\nabla\eta)\,\psi\|.
\end{align*}

\noindent
This allows us to conclude that
\begin{equation}\label{tau-tau-0}
\|e^{-\tau_j \nu}V\eta\psi\|
\leq
\frac{\kappa}{1-\kappa}\|e^{-\tau_j\nu}(\bm\alpha\cdot\bm\nabla\eta)\,\psi\|.
\end{equation}
Due to \eqref{def-r1},
$(\bm\alpha\cdot\bm\nabla\eta)\psi$
is supported outside of $\mathbb{B}^n_{R_1}$;
we conclude that
the right-hand side of \eqref{tau-tau-0} is bounded by
$e^{-\tau_j\nu(R_1)}\norm{(\bm\alpha\cdot\bm\nabla\eta)\,\psi}$.
With $\tau_j\to\infty$
and with $\nu$ being strictly monotonic in $r$,
\eqref{tau-tau-0} shows that
$\eta V\psi=0$ almost everywhere in
$\mathbb{B}^n_{r/2}(x\sb\ast)\cap\mathbb{B}^n_{R_1}$,
hence
$V\psi=0$
almost everywhere in
$\Omega_1:=\Omega_0
\cup
\{
\mathbb{B}^n_{r/2}(x\sb\ast)\cap\mathbb{B}^n_{R_1}
\}
$;
by \eqref{assumpt-k},
one also has
$(D_0\psi)\at{\Omega_1}=0$.
\end{proof}

\begin{lemma}\label{lemma-ede}
Let $R<\infty$.
Let $\nu\in C^\infty(\R^n)$
be spherically symmetric and
strictly monotonically increasing with $r$.
For any $c\in(0,\frac{\sqrt{3}}{2})$
and any $u\in H^1_{\mathrm{comp}}(\R^n,\C^N)$,
$0\not\in\supp u$,
there is $\tau_{c,\nu,u}<\infty$
such that for all $\tau\geq \tau_{c,\nu,u}$ one has
\[
\norm{e^{-\tau\nu}D_0 u}
\ge c\norm{D_0(e^{-\tau\nu}u)}.
\]
\end{lemma}

\begin{remark}
The proof shows that
the dependence of
$\tau\sb{c,\nu,u}$ on $u$
is via $\dist(0,\supp u)$
and $\mathop{\rm diam}(\supp u)$.
\end{remark}

\begin{proof}
Substituting $e^{\tau\nu}u\in H^1\sb{\mathrm{comp}}(\R^n,\C^N)$
in place of $u$,
we need to prove the inequality
\[
\norm{e^{-\tau\nu}D_0(e^{\tau\nu}u)}\ge c\norm{D_0 u};
\]
due to the identity
$e^{-\tau\nu}\circ D_0\circ e^{\tau\nu}=\bm\alpha\cdot(-\jj\nabla -\jj \tau\nabla\nu)$,
this is equivalent to proving
\begin{equation}\label{t-e}
\big\langle
u,\,\bm\alpha\cdot(-\jj\nabla
+\jj\tau\nabla\nu)\,
\bm\alpha\cdot(-\jj\nabla-\jj\tau\nabla\nu)
u
\big\rangle
\\
\geq
c^2
\big\langle u,\,
(-\Delta)
u
\big\rangle.
\end{equation}
For simplicity, we substitute $\tau$ by $1$
(later we will return $\tau$ into formulas).
Using \eqref{ts},
\begin{align*}
&
\bm\alpha\cdot(-\jj\nabla +\jj \nabla\nu)\,\bm\alpha\cdot(-\jj\nabla -\jj
\nabla\nu)
%\\
=
(-\jj\nabla +\jj \nabla\nu)\cdot(-\jj\nabla -\jj
\nabla\nu)
+ \jj \Sigma(-\jj\nabla +\jj \nabla\nu, -\jj\nabla -\jj
\nabla\nu)
\\[1ex]
&
= -\Delta+|\nabla \nu|^2-\Delta\nu
- \jj \Sigma(\nabla,
\nabla\nu)
+ \jj \Sigma(\nabla\nu,\nabla)
%\\
%&
= -\Delta+|\nabla \nu|^2-\Delta\nu
+ \jj \Sigma_{j k}(\nabla_j\nu\nabla_k+\nabla_k
\nabla_j\nu)
\\[1ex]
&= -\Delta+|\nabla \nu|^2-\Delta\nu
+\jj \Sigma_{j k}\nabla_{j k}^2\nu+2\jj \Sigma_{j k}\nabla_k\nu\nabla_j
%\\
%&
=-\Delta+|\nabla \nu|^2-\Delta\nu
+2\jj \Sigma_{j k}\nabla_k\nu\nabla_j,
\end{align*}
with
$\Sigma\sb{j k}=\frac{1}{2 \jj }
[\alpha\sp j,\alpha\sp k]$.
We denote the angular momentum tensor
by
$L_{j k}=x_k\jj\nabla_j-x_j\jj\nabla_k$
and set $G:=\Sigma\cdot L=\sum\sb{j,k}\Sigma\sb{j k}L\sb{j k}$.
Taking into account that $\nu$ is spherically symmetric
and denoting $\nu'=\p\sb r\nu$,
we have:
\begin{equation}
\label{eq:RadialSquareDirac}
 \bm\alpha\cdot(-\jj\nabla +\jj \nabla\nu)
\,\bm\alpha\cdot(-\jj\nabla -\jj \nabla\nu)
= -\Delta
+|\nu'|^2-\Delta\nu
+({\nu'}/{r})G.
\end{equation}
Given any $\theta>0$, we can proceed as follows:
\begin{eqnarray}\label{a-d-a-d-0}
&&
\bm\alpha\cdot(-\jj\nabla +\jj \nabla\nu)
\,\bm\alpha\cdot(-\jj\nabla -\jj \nabla\nu)
\nonumber
\\
&&
=
-\p\sb r^2
- \frac{n-1}{r}\p\sb r
-\frac{\Delta_{\mathbb{S}^{n-1}}}{r^2}
+|\nu'|^2-\Delta\nu
+ \frac{\nu'}{r}\left(G+\frac{n-2}{2}\right)
-\frac{(n-2)\nu'}{2r}
\nonumber
\\
&&
\ge
-\p\sb r^2
- \frac{n-1}{r}\p\sb r
-\frac{\Delta_{\mathbb{S}^{n-1}}}{r^2}
+|\nu'|^2-\Delta\nu
-\theta|\nu'|^2
%\nonumber
%\\
%&&
%\qquad\qquad\qquad\qquad\qquad\qquad
-\frac {1}{4\theta r^2}\left(G+\frac{n-2}{2}\right)^2
-\frac{(n-2)\nu'}{2r}
\nonumber
\\
&&
=
-\p\sb r^2
- \frac{n-1}{r}\p\sb r
-
\Big(1-\frac{1}{4\theta}\Big)
\frac{\Delta_{\mathbb{S}^{n-1}}}{r^2}
+(1-\theta)|\nu'|^2-\Delta\nu
%\nonumber
%\\
%&&
%\qquad\qquad\qquad\qquad\qquad\qquad
-\frac{(n-2)^2}{16\theta r^2}
-\frac{(n-2)\nu'}{2r}.
\end{eqnarray}
The last equality is due to the identity
$
\Big(G+\frac{n-2}{2}\Big)^2
=\Big(S-\frac{1}{2}\Big)^2
=-\Delta_{\mathbb{S}^{n-1}}+\left(\frac{n-2}{2}\right)^2
$,
where $S$ is the spin-orbit coupling operator;
see~\cite[p. 849]{MR1741375}.
Taking into account that
the radial part of the Laplace operator
is positive-definite
$-\p\sb r^2-((n-1)/r)\p\sb r\ge 0$,
and returning the factor $\tau$ at $\nu$,
we conclude from \eqref{a-d-a-d-0} that
\begin{eqnarray}\label{a-d-a-d}
%&&
\bm\alpha\cdot(-\jj\nabla +\jj\tau \nabla\nu)
\,\bm\alpha\cdot(-\jj\nabla -\jj\tau \nabla\nu)
%\nonumber
%\\
%&&
\ge
-
\Big(1-\frac{1}{4\theta}\Big)\Delta
+(1-\theta)|\tau\nu'|^2-\tau\Delta\nu
-\frac{(n-2)^2}{16\theta r^2}
-\frac{(n-2)\tau\nu'}{2r}.
\end{eqnarray}
Given $c\in(0,\sqrt{3}/2)$,
let $\theta\in(0,1)$ be such that $1-\frac{1}{4\theta}=c^2\in(0,3/4)$.
The right-hand side of \eqref{a-d-a-d} is greater than or equal to
\[
\Big(1-\frac{1}{4\theta}\Big)(-\Delta)
=c^2(-\Delta)
\]
(thus yielding \eqref{t-e})
once we make sure that
the following expression is positive everywhere on $\supp u$:
\begin{equation}\label{i-p}
(1-\theta)
\tau^2|\nu'|^2-\tau\Delta\nu
-\frac{1}{4\theta r^2}
\left(\frac{n-2}{2}\right)^2
-\frac{|(n-2)\tau\nu'|}{2r}.
\end{equation}
Since $\theta<1$,
taking into account that
$\inf\sb{x\in\supp u}\p\sb r\nu(x)>0$
(since $\supp u$ is compact)
and that $r\ge \dist(0,\supp u)>0$,
we conclude that
there is $\tau_{c,\nu,u}<\infty$
such that \eqref{i-p} is positive for all
$\tau\ge\tau_{c,\nu,u}$ and all $x\in\supp u$.
\end{proof}

The higher-dimensional version of the unique continuation principle in
\cite[Appendix]{MR880983} follows from the H\"older inequality
and the Sobolev embedding,
\begin{equation}\label{b-s}
\|V \psi\|_{L^2(\Omega,\C^N)}\leq
A_n\|V\|_{L^n(\Omega,\End(\C^N))}\|\psi\|_{\dot H^1(\Omega,\C^N)},
\qquad n\ge 3,
\end{equation}
with
$A_n>0$ the best Sobolev constant,
valid for all
open sets $\Omega\subset\R^n$,
$V\in L^n_{\mathrm{loc}}(\R^n,\End(\C^N))$,
and $\psi\in H^1_{\mathrm{comp}}(\Omega,\C^N)$.
Here it is:

\begin{theorem}\label{theorem-ln}
Let $n\ge 1$.
Let $\Omega$ be an open connected set of $\R^n$.
Let $V:\,\Omega\to\End(\C^N)$
be measurable,
such that
$V\in L^n_{\mathrm{loc}}(\Omega,\End(\mathbb{C}^N))$ if $n\neq 2$ or $V\in L^q_{\mathrm{loc}}(\Omega,\End(\mathbb{C}^N))$ with $q>2$ if $n=2$.
Assume that
$\psi\in H\sp 1\sb {\mathrm{loc}}(\Omega,\C^N)$
is such that
$\psi=0$ almost everywhere in a non-empty open
set $\Omega_0\subset\Omega$
and
\[
|(D\sb 0\psi)(x)|\leq |(V\psi)(x)|
\qquad
\mbox{for $x$ almost everywhere in $\Omega$}.
\]
Then $\psi=0$ almost everywhere in $\Omega$.
\end{theorem}

\begin{remark}
Similar (slightly weaker) results have also been obtained
in \cite{MR865834,MR880983,MR1280386};
see also the survey \cite{MR934297}.
The counterexamples in~\cite{MR1880829}
to the unique continuation principle
constructed for the case of the Laplace operator
suggest that the above result is optimal.
\end{remark}

\begin{proof}
In the one-dimensional case, the statement is
a consequence of the uniqueness of ODE solutions.
More precisely, the relation
$
|\psi'(x)|<|V\psi(x)|
$,
with $x$ in an open interval $\Omega\subset\R$,
together with the assumption $\psi(a)=0$ at some $a\in\Omega$,
leads to
\[
|\psi(x)|
\le
|W(x)|\sup_{y\in[a,x]}|\psi(y)|,
\qquad
x\in\Omega,
\]
where
$W(x):=\int_a^x|V(y)|\,dy$
is continuous in $\Omega$
since $V\in L^1_{\mathrm{loc}}(\Omega)$.
One deduces that $\psi(x)\equiv 0$
in the closure
of an open neighborhood of $a$ where $|W|<1$.
Then, by induction, $\psi$ vanishes identically in $\Omega$.

Let us now assume that $n\geq 3$.
It is enough to assume that $\Omega$ is bounded and so small that
\begin{equation}\label{v-ln-small}
A_n\norm{V}\sb{L^n(\Omega,\End(\C^N))}<\frac{1}{2},
\end{equation}
with $A_n$ the best Sobolev constant from \eqref{b-s}.
Fix $\nu\in C^\infty(\R^n)$ as above (spherically symmetric,
strictly monotonically increasing with $\abs{x}$, $x\in\R^n$).
We claim that $V$ satisfies the assumption~\eqref{assumpt}
of Lemma~\ref{lemma-unique-continuation},
with some $\kappa\in(0,1)$.
Indeed, let $u\in H^1_{\mathrm{comp}}(\Omega,\C^N)$
and
$a\in\Omega\setminus\supp u$.
By Lemma~\ref{lemma-ede},
there is $\tau_0=\tau_{\frac 1 2,\nu,u(\cdot-a)}<\infty$
such that for any $\tau\geq \tau_0$
one has
\begin{equation}\label{e-t-u}
\norm{e^{-\tau\nu_a}D_0 u}\ge \frac 1 2\norm{D_0(e^{-\tau\nu_a}u)}.
\end{equation}
Using the Sobolev embedding \eqref{b-s},
we have:
\begin{eqnarray}\label{comb}
&
\|e^{-\tau \nu_a }V u\|_{L^2(\Omega,\C^N)}
\leq
A_n\|V\|_{L^n(\Omega,\End(\C^N))}
\|e^{-\tau \nu_a}u\|_{\dot{H}^1(\Omega,\C^N)}
\\[1ex]
&
%\qquad\qquad
=
A_n\|V\|_{L^n(\Omega,\End(\C^N))}
\|D_ 0 e^{-\tau \nu_a}u\|_{L^2(\Omega,\C^N)}
%\nonumber
%\\
%&
%\qquad\qquad
\leq
2 A_n\|V\|_{L^n(\Omega,\End(\C^N))}
\|e^{-\tau \nu_a}D_0 u\|_{L^2(\Omega,\C^N)}.
\nonumber
\end{eqnarray}
Above, the equality is due to
$\norm{u}\sb{\dot H^1}^2
=\langle \nabla u,\nabla u\rangle
=\langle u,(-\Delta)u\rangle
=\langle D_0 u,D_0 u\rangle$,
while the last inequality is due to \eqref{e-t-u}.
One concludes from \eqref{comb}
that $V$ satisfies \eqref{assumpt}
with
\[
\kappa=2A_n\norm{V}\sb{L^n(\Omega,\End(\C^N))},
\]
with $\kappa<1$
as long as $\Omega$ is small enough
so that \eqref{v-ln-small} is satisfied.

Let $\psi\in H^1_{\mathrm{loc}}(\Omega,\C^N)$
and assume that
\begin{equation}\label{assume-omega-0}
\Omega_0:=\Omega\setminus\supp\psi\ne\varnothing,
\qquad
\psi\at{\Omega}\not\equiv 0.
\end{equation}
Then, by Lemma~\ref{lemma-unique-continuation},
there exists an open set
$\Omega_1\subset \Omega$ such that
\begin{equation}\label{omega-omega}
\Omega_0
%%:=\Omega\setminus \supp \psi
\subsetneqq \Omega_1,
\end{equation}
with $V\psi=0$ and $D_0\psi=0$
almost everywhere in $\Omega_1$.
We claim that, as the matter of fact,
this would lead to $\psi\at{\Omega_1}\equiv 0$.
Since $V\equiv 1$ is in $L^n\sb{\mathrm{loc}}(\Omega_1,\End(\C^N))$,
by the above argument, it satisfies the assumption
\eqref{assumpt} of Lemma~\ref{lemma-unique-continuation}
(with some $\kappa\in(0,1)$).
Moreover, since $D_0\psi=0$ almost everywhere in $\Omega_1$,
the assumption \eqref{assumpt-k} is also satisfied
(with $V=1$).
Due to \eqref{omega-omega},
$\Omega_0'=\Omega_1\setminus\supp\psi\supset\Omega_0$
is non-empty.
Assume that
$\psi\at{\Omega_1}\not\equiv 0$.
Then, by Lemma~\ref{lemma-unique-continuation},
there exists an open set
$\Omega'\subset \Omega_1$
such that
\begin{equation}\label{omega-omega-prime}
\Omega_0'
=\Omega_1\setminus\supp \psi\subsetneqq \Omega',
\end{equation}
with $V\psi=\psi=0$
(since now $V=1$)
and $D_0\psi\equiv 0$
almost everywhere in $\Omega'$.
Thus, we would have $\psi=0$ almost everywhere in $\Omega'$,
contradicting
\eqref{omega-omega-prime}.
We conclude that $\psi\at{\Omega_1}\equiv 0$,
but this, in turn, leads to a contradiction
with \eqref{omega-omega}.
% the definition of $\supp \psi \cap \Omega_1$
%unless it is empty.
%This is the conclusion of the proof.
Therefore,
\eqref{assume-omega-0}
results in a contradiction;
we conclude that if $\Omega_0=\Omega\setminus\supp\psi$
is non-empty, then $\psi$ vanishes almost everywhere on $\Omega$.

In dimension $n=2$, the proof is similar to the one above,
with the following adaptation of \eqref{comb}.
%% where we can no longer use the Sobolev embedding \eqref{b-s};
%% we exploit the Gagliardo--Nirenberg--Sobolev inequality instead.
Given $V\in L^q_{\mathrm{loc}}(\Omega,\R^2)$ with $q>2$,
the H\"older inequality yields
\[
\|e^{-\tau \nu_a }V u\|_{L^2(\Omega,\C^N)}
\leq
\|V\|_{L^q(\Omega,\End(\C^N))}
\|e^{-\tau \nu_a}u\|_{L^{p^\ast}(\Omega,\C^N)},
\]
where the second factor is bounded
with the aid of the Sobolev inequality
as follows:
\begin{eqnarray}
\|e^{-\tau \nu_a}u\|_{L^{p^\ast}(\Omega,\C^N)}
\le
c
\|\nabla
e^{-\tau \nu_a}u\|_{L^p(\Omega,\C^N)}
%\nonumber
%\\
\le
c
\mathop{\mathrm{vol}}(\Omega)^{\frac 1 p -\frac 1 2}
\|\nabla e^{-\tau \nu_a}u\|_{L^2(\Omega,\C^N)},
\nonumber
\end{eqnarray}
with $c=c(p)<\infty$ independent of $\Omega$;
above, $p^\ast>2$ and $p\in(1,2)$ satisfy
\[
\frac 1 q+\frac{1}{p^\ast}=\frac 1 2,
\qquad
\frac{1}{p^\ast}=\frac{1}{p}-\frac 1 n,
\qquad
n=2.
\]
The set $\Omega\subset\R^2$ is to be small enough so that
\[
\kappa
:=
2c\mathop{\mathrm{vol}}(\Omega)^{\frac 1 p -\frac 1 2}
\|V\|_{L^q(\Omega,\End(\C^N))}<1,
\]
and then,
using \eqref{e-t-u} as before, we arrive at
\[
\|e^{-\tau \nu_a }V u\|_{L^2(\Omega,\C^N)}
\le
\frac{\kappa}{2}
\|D_0 e^{-\tau \nu_a} u\|_{L^2(\Omega,\C^N)}
\le
\kappa
\|e^{-\tau \nu_a}D_0 u\|_{L^2(\Omega,\C^N)},
\]
with $\kappa<1$;
hence, $V$ satisfies the assumption (\ref{assumpt}).
The rest of the argument is unchanged.
\end{proof}

\bibliographystyle{sima-doi}
%\bibliographystyle{sima-arxiv}
%\bibliographystyle{alpha}
%\bibliography{all}
\bibliography{linear-a}
\end{document}